\numberwithin{equation}{section}
\newtheorem{theorem}{Theorem}[section]
\newtheorem{proposition}[theorem]{Proposition}
\newtheorem{lemma}[theorem]{Lemma}
\newtheorem{corollary}[theorem]{Corollary}
\theoremstyle{definition}
\theoremstyle{remark}
\newtheorem{remark}[theorem]{Remark}
\newcommand{\N}{\mathbb{N}}
\newcommand{\Z}{\mathbb{Z}}
\newcommand{\Zp}{\mathbb{Z}_{p}}
\newcommand{\Zpuni}{\mathbb{Z}_{p}^{\times}}
\newcommand{\Q}{\mathbb{Q}}
\newcommand{\C}{\mathbb{C}}
\newcommand{\HH}{\mathbb{H}}
\newcommand{\proj}{{\mathbb P}}
\newcommand{\SL}{{\rm SL}_2(\mathbb{Z})}
\newcommand{\Mp}{{\rm Mp}_2(\mathbb{Z})}
\newcommand{\Or}{{\rm O}^+}
\newcommand{\Ost}{\widetilde{{\rm O}}^+}
\newcommand{\HM}{{\rm vol}_{HM}}
\newcommand{\HMOL}{{\rm vol}_{HM}({\rm O}^{+}(L))}
\newcommand{\HMOK}{{\rm vol}_{HM}({\rm O}^{+}(K))}
\newcommand{\DL}{\mathcal{D}_{L}}
\newcommand{\FL}{\mathcal{F}_{L}}
\newcommand{\FLcpt}{\bar{\mathcal{F}_{L}}}
\newcommand{\OAL}{{\rm O}(A_{L})}
\newcommand{\OAK}{{\rm O}(A_{K})}
\newcommand{\divi}{{\rm div}}
\newcommand{\ord}{{\rm ord}}
\newcommand{\SAI}{\mathcal{S}(\textrm{AI})}
\newcommand{\SBI}{\mathcal{S}(\textrm{BI})}
\newcommand{\SBII}{\mathcal{S}(\textrm{BII})}
\newcommand{\SBIII}{\mathcal{S}(\textrm{BIII})}
\begin{document}

\title[]{Finiteness of stable orthogonal modular varieties of non-general type}
\author[]{Shouhei Ma}
\thanks{Supported by Grant-in-Aid for Scientific Research No.12809324 and No.22224001.} 
\address{Department~of~Mathematics, Tokyo~Institute~of~Technology, Tokyo 152-8551, Japan}
\email{ma@math.titech.ac.jp}
\keywords{} 
\maketitle 

\begin{abstract}
We prove that there are only finitely many even lattices $L$ 
of signature $(2, n)$ with $n\geq15$ such that 
the modular variety defined by the stable orthogonal group of $L$ is not of general type. 
\end{abstract}


\section{Introduction}\label{sec:intro}

\subsection{Main result}

Our main object in this article is to show that 
a certain class of modular varieties canonically attached to even lattices $L$ of signature $(2, n)$ 
are almost always of general type, provided that $n$ is not in a low range. 
The modular groups we will work with are defined as follows. 
Let $L^{\vee}$ be the dual lattice of $L$ and $A_L=L^{\vee}/L$ be the discriminant group, 
which is canonically equipped with a ${\Q}/2{\Z}$-valued quadratic form.  
We have a natural homomorphism ${\rm O}(L)\to{\OAL}$, 
whose kernel is denoted by $\widetilde{{\rm O}}(L)$ and referred to as the \textit{stable orthogonal group} of $L$. 
This is a canonical congruence subgroup of ${\rm O}(L)$. 
Let ${\DL}$ be the Hermitian symmetric domain of type IV associated to $L$. 
It is either of the two components of the space 
\begin{equation*}\label{eqn:def type IV domain}
\{  {\C}\omega \in {\proj}(L\otimes{\C}) \; | \; (\omega, \omega)=0, (\omega, \bar{\omega})>0  \} . 
\end{equation*}
Then let ${\Ost}(L)$ be the subgroup of $\widetilde{{\rm O}}(L)$ preserving ${\DL}$. 
The quotient space 
\begin{equation*}\label{eqn:def FL}
{\FL} = {\Ost}(L)\backslash{\DL} 
\end{equation*}
has the structure of a quasi-projective variety of dimension $n$. 
We are interested in the birational type of ${\FL}$.

\begin{theorem}\label{main}
There are only finitely many even lattices $L$ of signature $(2, n)$ with $n\geq15$ 
such that ${\FL}$ is not of general type. 
In particular, ${\FL}$ is always of general type if $n$ is sufficiently large. 
\end{theorem}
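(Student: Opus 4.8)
\noindent\emph{Plan of the proof.}
The strategy is to reduce Theorem~\ref{main} to a single numerical inequality for the Hirzebruch--Mumford volume, and then to show by mass-formula estimates that this inequality can fail for only finitely many~$L$.

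\smallskip
\noindent\textbf{Step 1: a general-type criterion.}
Fix a smooth projective model $Y\to\FLcpt$ dominating a toroidal compactification of $\FL$, and let $\mathcal{L}$ denote the modular (Hodge) line bundle, so that $\dim M_{k}(\Ost(L))=\tfrac{2}{n!}\HM(\Ost(L))\,k^{n}+O(k^{n-1})$ by the Hirzebruch--Mumford proportionality principle. Since $K_{\FL}$ coincides with $\mathcal{L}^{\otimes n}$ away from the ramification divisor $R$ of the quotient map $\DL\to\FL$ and away from the boundary, a weight-$kn$ modular form for $\Ost(L)$ with character $\det^{k}$ descends to a pluricanonical form on $Y$ as soon as it (i) vanishes to order $\geq k$ along $R$, (ii) is a cusp form vanishing to order $\geq k$ along the boundary divisors, and (iii) meets the Reid--Tai condition at the remaining interior quotient singularities, which for $n\geq15$ reduces to (i). Counting the linear conditions in (i)--(iii), and invoking the low-weight cusp form trick of Gritsenko--Hulek--Sankaran (a fixed modular form of weight $<n$ whose divisor contains $R$) whenever one is available, one arrives at the following: \emph{$\FL$ is of general type provided $n\geq15$ and}
\begin{equation}\label{eqn:criterion}
\HM(\Ost(L))\;>\;c_{n}\Bigl(\;\sum_{v}\HM\bigl(\Ost(v^{\perp})\bigr)\;+\;B^{\mathrm{cusp}}_{L}\;+\;B^{\mathrm{sing}}_{L}\;\Bigr),
\end{equation}
where $v$ runs over representatives of the $\Ost(L)$-orbits of primitive vectors defining a quasi-reflection in $\Ost(L)$ (among them all $v$ with $v^{2}=-2$), the terms $B^{\mathrm{cusp}}_{L}$ and $B^{\mathrm{sing}}_{L}$ are analogous volumes attached to the $0$- and $1$-dimensional cusps and to the non-quasi-reflective fixed loci, and $c_{n}$ is an explicit constant which stays bounded as $n\to\infty$. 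For any single $L$ all of these are finite numbers, so the content of \eqref{eqn:criterion} lies in how its two sides compare as $L$ varies.

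\smallskip
\noindent\textbf{Step 2: bounding the obstruction.}
Each term on the right of \eqref{eqn:criterion} is controlled by $|A_{L}|$. By Eichler's criterion (after splitting off as many hyperbolic planes as possible) the $\Ost(L)$-orbits of quasi-reflective vectors are governed by a bounded amount of discriminant-form data, so their number is $O_{n}(|A_{L}|^{N})$ for some $N$, and for each such $v$ the lattice $v^{\perp}$ has signature $(2,n-1)$ with $|A_{v^{\perp}}|$ comparable to $|A_{L}|$. Writing $\HM$ as a product of local densities times an archimedean factor, the archimedean factor of a rank-$(n+1)$ form is smaller than that of a rank-$(n+2)$ form by a factor of order $\Gamma\bigl(\tfrac{n+2}{2}\bigr)$ (equivalently, of order $|B_{n+2}|$), which outgrows any fixed power of $|A_{L}|$ and absorbs both the polynomial number of orbits and the bounded discrepancy between the local densities of $L$ and of $v^{\perp}$. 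The terms $B^{\mathrm{cusp}}_{L}$ and $B^{\mathrm{sing}}_{L}$ involve lattices of still lower rank (or definite lattices) and are dominated a fortiori.

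\smallskip
\noindent\textbf{Step 3: finiteness.}
Now split into two regimes. For fixed $n$, the even lattices of signature $(2,n)$ with $|A_{L}|\leq D$ form a finite set by the standard finiteness of lattices of bounded discriminant, while for $|A_{L}|>D(n)$ the mass-formula comparison of Step~2 forces \eqref{eqn:criterion}; hence for each $n\geq15$ only finitely many $L$ fail to be of general type. For the uniform statement, $\HM(\Ost(L))$ admits a lower bound --- valid for \emph{all} even $L$ of signature $(2,n)$ --- which grows super-exponentially in $n$, being dominated by the product of Bernoulli numbers appearing in the archimedean factor, whereas for these extremal lattices the right-hand side of \eqref{eqn:criterion} grows much more slowly; thus there is $n_{0}$ such that every even $L$ of signature $(2,n)$ with $n\geq n_{0}$ satisfies \eqref{eqn:criterion}. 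Combining the two regimes leaves only finitely many pairs $(n,L)$ with $n\geq15$ for which $\FL$ is not of general type, which is Theorem~\ref{main}.

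\smallskip
\noindent\textbf{Expected main obstacle.}
The crux is the \emph{uniform} validity of \eqref{eqn:criterion}, i.e.\ bounding the total reflective obstruction $\sum_{v}\HM(\Ost(v^{\perp}))$ against $\HM(\Ost(L))$ for all $n\geq15$ and all $L$ simultaneously. This is precisely the point at which one must preclude an infinite family of ``reflective'' lattices --- those carrying anomalously many $(-2)$-vectors relative to their volume, for which $\FL$ is often unirational rather than of general type. The delicate ingredients are the behaviour of the local densities at $p=2$ and $p=3$ (where both the densities and the number of quasi-reflective orbits are least regular), the lattices of small discriminant near $U\oplus U\oplus(\text{root lattice})$ (where the archimedean gain is the only margin), and making $c_{n}$ and the archimedean lower bound effective enough to reach the threshold $n\geq15$ rather than a larger one. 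Carrying out the explicit local computations, above all at $p=2$, is the technical heart of the argument.
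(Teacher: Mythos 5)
Your Steps 2--3 do track the paper's treatment of the \emph{reflective} obstruction (classification of stably reflective vectors, Eichler's criterion, comparison of Hirzebruch--Mumford volumes via local densities), but there are two genuine gaps. First, the boundary obstruction $-\Delta$ is not a volume term and cannot be folded into an inequality like your \eqref{eqn:criterion} as a quantity ``$B^{\mathrm{cusp}}_{L}$''. What is actually needed is a cusp form of weight $n'<n$ for $\Ost(L)$, and proving that such a form exists for all but finitely many $L$ is one of the two main sub-theorems of the paper (Theorem \ref{effective}). It is established by an entirely different mechanism: Jacobi lifting, the translation of Jacobi cusp forms into vector-valued modular forms for the Weil representation $\rho_M$, and the dimension formula of Skoruppa/Borcherds together with Bruinier's estimates of the $\alpha_1,\alpha_2$ terms. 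Moreover the Jacobi lift of a Jacobi \emph{cusp} form is only guaranteed to be a cusp form when the lattice is quasi-cyclic, which forces the preliminary reduction (via quasi-cyclic overlattices whose exponent drops by at most a factor of $2$, constructed in the Appendix) from arbitrary even lattices to quasi-cyclic ones. Your phrase ``whenever one is available'' assumes away exactly the point that has to be proved, and none of this machinery appears in your outline.

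Second, the quantitative mechanism in your Step 2 would fail. You allow the number of quasi-reflective orbits to grow like $|A_L|^{N}$ and propose to absorb this by the archimedean factor of order $\Gamma\bigl(\tfrac{n+2}{2}\bigr)$; but that factor depends only on $n$, so at \emph{fixed} $n$ it is a constant and cannot absorb any growth in $|A_L|$. The volume ratio $\HM({\rm O}^{+}(K))/\HM({\rm O}^{+}(L))$ supplies only a decay of order $|A_L|^{-1/2}$, so the argument closes only because the \emph{weighted} orbit count $\sum_{\mathcal{R}(*)}|{\rm O}(A_K)/\pm1|/|{\rm O}(A_L)/\pm1|$ is bounded by an explicit function of $n$ alone (at worst $2^{n+6}$), independently of $|A_L|$. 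Establishing that bound is the content of \S\ref{sec:sum over branch div} and uses the special shape $A_L\simeq\Z/D\oplus(\Z/2)^m$ forced on lattices admitting type-B reflections, together with Kawauchi--Kojima invariants of $2$-adic discriminant forms; it is not a formal consequence of Eichler's criterion. With the boundary obstruction handled separately and the weighted orbit count bounded in $n$, one gets the criterion $\sqrt{|A_L|}\geq\frac{n}{2a}(1+\frac1a)^{n-1}\sum_{*}e_{*}(n)f_{*}(n)$, whose right side tends to $0$ as $n\to\infty$; that, rather than a super-exponential lower bound on $\HM(\Ost(L))$, is what yields the uniform statement.
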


This is essentially an effective result, in the sense that with a huge amount of computation 
one would be able to enumerate all possible $L$ for which ${\FL}$ \textit{may} not be of general type 
(cf.~Remarks \ref{eff calcu}, \ref{effective calcu cusp form} and \ref{effective bigness}). 
For instance, we can tell that ${\FL}$ is always of general type at least in the range $n\geq42$. 
It is another, nontrivial problem to indeed find non-general type examples. 
The traditional method is to look for a dominant period map from a uniruled parameter space; 
Gritsenko recently found an approach using modular forms (\cite{Gr2}, \cite{G-H}).

In complex algebraic geometry, modular varieties of orthogonal type  
appear as the period spaces of (lattice-)polarized $K3$ surfaces and hyperK\"ahler manifolds.  
For instance, when $L=2U\oplus 2E_8 \oplus \langle -2d\rangle$, 
${\FL}$ gives the moduli space of polarized $K3$ surfaces of degree $2d$. 
Gritsenko-Hulek-Sankaran \cite{G-H-S1} proved that this space is of general type for almost all $d$, 
while Mukai has shown that it is unirational for some small $d$ (see \cite{Mu} and the references there). 
Our result emphasizes the viewpoint that the latter spaces are exceptional regarding birational type. 
It also suggests that it will get hard to 
construct \textit{generic} polarized hyperK\"ahler manifolds \textit{explicitly}, 
when the polarized Beauville form $L$ has sufficiently large rank and 
its monodromy group is contained in ${\Ost}(L)$.

The phenomenon that even a natural class of modular varieties tend to be of general type in higher dimension 
was first discovered for the symplectic groups by Tai \cite{Ta}. 
Gritsenko-Hulek-Sankaran \cite{G-H-S3} found a similar picture with ${\FL}$ for a special series of lattices $L$.  
This was the first result on the Kodaira dimension of orthogonal modular varieties in higher dimension. 
The present article was inspired by their work.   
 

Ideally, it would be desirable to extend the finiteness result 
from the stable orthogonal groups ${\Ost}(L)$ to the full orthogonal groups ${\rm O}^+(L)$. 
It would be also not unreasonable to expect 
an analogue of Borisov's (\cite{Bori}) finiteness theorem for $2U\oplus \langle-2\rangle$ 
to hold for more general lattices. 

In the next \S \ref{ssec:reduction} and \S \ref{decomposition} 
we set up the proof of Theorem \ref{main}. 
We first reduce it to the proof of Theorem \ref{reduce to quasi-cyclic}, 
which in turn is decomposed into Theorems \ref{effective} and \ref{bigness}. 
The last two theorems are purely in terms of modular forms, 
and the bulk of the article is devoted to proving them.

\subsection{Reduction of lattices}\label{ssec:reduction}

For the proof of Theorem \ref{main} we shall reduce the class of lattices.  
We say that an even lattice $L$ is \textit{quasi-cyclic} if any isotropic subgroup of its discriminant form $A_L$ is cyclic. 
Geometrically this condition implies that any $1$-dimensional cusp of ${\FL}$ contains 
the standard $0$-dimensional cusp in its closure, 
which in turn assures that the Jacobi lifting of cusp forms are cusp forms (\cite{G-H-S1} \S 4). 
The notion of quasi-cyclic lattice is an extension of maximal lattice. 
We have studied their discriminant forms in the Appendix. 

We show that 
Theorem \ref{main} can be deduced from the following weaker assertion. 

\begin{theorem}\label{reduce to quasi-cyclic}
There are only finitely many quasi-cyclic lattices of signature $(2, n)$ with $n\geq15$ 
such that ${\FL}$ is not of general type. 
\end{theorem}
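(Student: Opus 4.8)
The plan is to factor Theorem~\ref{reduce to quasi-cyclic} through a sufficient criterion for general type (the content I expect in Theorem~\ref{bigness}) and an existence result for low-weight cusp forms (Theorem~\ref{effective}), following the strategy of Gritsenko--Hulek--Sankaran \cite{G-H-S3} but carried out uniformly in the lattice. Recall that $\FL$ carries the modular (Hodge) line bundle $\mathcal{L}$, sections of $\mathcal{L}^{\otimes m}$ being modular forms of weight $m$ for $\Ost(L)$ with a suitable character, and that on $\FL$ one has $K_{\FL}\cong\mathcal{L}^{\otimes n}$ away from the ramification divisor $R$ of $\DL\to\FL$, while on a smooth toroidal compactification $\FLcpt$ the boundary divisor $\Delta$ contributes a discrepancy. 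Since $n\geq15$ (in particular $n\geq9$), the boundary singularities of $\FLcpt$ are canonical, so a modular form of weight $nk$ that vanishes to order $\geq k$ along both $\Delta$ and $R$ descends to a section of $K_{\FLcpt}^{\otimes k}$. The quasi-cyclic hypothesis enters precisely here: by \S\ref{ssec:reduction} it guarantees that cusp forms vanish along \emph{every} boundary component, so that a power of a single cusp form supplies the required boundary order of vanishing.

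\textbf{Bigness from a low-weight cusp form.} Suppose $\FL$ admits a non-zero cusp form $F$ of weight $a$ with $a<n$ vanishing along $R$. For every holomorphic modular form $H$ of weight $(n-a)k$ the product $F^{k}H$ then has weight $nk$ and vanishes to order $\geq k$ along both $R$ and $\Delta$ (both coming from $F^{k}$), hence descends to a $k$-pluricanonical form. Since the graded ring of modular forms for $\Ost(L)$ is an integral domain, $H\mapsto F^{k}H$ is injective, so
\[
  P_{k}(\FL)\ \geq\ \dim M_{(n-a)k}(\Ost(L))\ =\ \frac{2\,\HM(\Ost(L))}{n!}\,\bigl((n-a)k\bigr)^{n}+O(k^{n-1})
\]
by Hirzebruch--Mumford proportionality. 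As $n-a\geq1$, the right-hand side grows like $k^{n}$, so $\FL$ is of general type. This reduces the whole problem to producing one such $F$ for all but finitely many quasi-cyclic $L$.

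\textbf{Existence and finiteness.} To produce $F$, I would bound from below the dimension of the space $S_{a}(\Ost(L))$ of weight-$a$ cusp forms vanishing on $R$ by
\[
  \dim S_{a}(\Ost(L))\ \geq\ \frac{2\,\HM(\Ost(L))}{n!}\,a^{n}-(\text{boundary obstruction})-(\text{vanishing along }R),
\]
where each obstruction term is $O(a^{n-1})$, governed respectively by the number and volumes of the cusps and by the Hirzebruch--Mumford volumes of the rank-$(n-1)$ reflective sublattices cutting out the components of $R$. Taking $a=n-1$, positivity of the right-hand side reduces to an inequality of the shape $\HM(\Ost(L))\cdot(n-1)/n!>(\text{obstruction constant})$. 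Since $\HM(\Ost(L))\to\infty$ as $|A_{L}|\to\infty$ for fixed $n$, and since its minimum over quasi-cyclic lattices of signature $(2,n)$ grows without bound as $n\to\infty$, this inequality can fail only for finitely many pairs $(n,L)$; this is the effective statement \ref{effective}. Combined with the bigness criterion \ref{bigness}, every quasi-cyclic $L$ outside that finite list has $\FL$ of general type, which proves Theorem~\ref{reduce to quasi-cyclic}; in particular no exceptions survive once $n$ exceeds an explicit bound (e.g.\ $n\geq42$). The main obstacle is the \emph{uniformity} of the final inequality: one must bound the total contribution of $R$ and $\Delta$ to the obstruction purely in terms of lattice invariants and weigh it against the growth of $\HM(\Ost(L))$, simultaneously over the infinitely many quasi-cyclic lattices and over all $n\geq15$. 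Controlling the reflective part of $R$---counting the $(-2)$-vectors whose reflections lie in $\Ost(L)$ and bounding their contribution to the obstruction---is the delicate point, and it is exactly where the quasi-cyclic restriction and the discriminant-form analysis of the Appendix are needed.
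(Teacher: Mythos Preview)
Your strategy—find a single cusp form $F$ of weight $a<n$ that also vanishes along the ramification divisor $R$, then multiply $F^k$ by arbitrary modular forms $H$—is valid \emph{if} such an $F$ exists, but this is not how the paper proceeds, and your existence argument for $F$ has a real gap. The paper does \emph{not} look for a cusp form vanishing on $R$. It writes
\[
K_{\FLcpt}\;\sim_{\Q}\;(n'\mathcal{L}-\Delta)\;+\;(n''\mathcal{L}-B/2),\qquad n'+n''=n,
\]
and treats the two summands independently: Theorem~\ref{effective} says that for all but finitely many quasi-cyclic $L$ one can make $n'\mathcal{L}-\Delta$ \emph{effective} (a cusp form of weight $n'<n$, with no condition along $B$), and Theorem~\ref{bigness} says that $n''\mathcal{L}-B/2$ is \emph{big}. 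The cusp form in Theorem~\ref{effective} is constructed by Jacobi lifting together with the exact dimension formula for $S_l(\rho_M)$ (Proposition~\ref{dim cusp form}); the quasi-cyclic hypothesis is used precisely to ensure that the lift of a Jacobi cusp form is a genuine cusp form (Theorem~\ref{Jacobi lifting}). Your sentence ``it guarantees that cusp forms vanish along every boundary component'' is a tautology; the point is that quasi-cyclicity makes this particular \emph{construction} of a cusp form succeed.

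The gap in your approach is the step where you apply the Hirzebruch--Mumford asymptotic $\dim M_a(\Ost(L))=\frac{2}{n!}\HM(\Ost(L))\,a^n+O(a^{n-1})$ at the \emph{fixed} weight $a=n-1$. The $O(a^{n-1})$ is an asymptotic as $a\to\infty$ with $L$ fixed; it gives no control at one specific weight, and the implied constant depends on $L$ in a way you have not bounded. The paper uses the HM asymptotic only where it is legitimate, namely in the bigness proof for $n''\mathcal{L}-B/2$: there one lets an auxiliary multiplicity $k\to\infty$ and compares leading $k^n$-coefficients (Lemma~\ref{quasi-pullback estimate} and Proposition~\ref{big via HM volume}), which yields the clean inequality~\eqref{eqn: big via HM volume} involving only the ratios $\HM(\Gamma_i)/\HM(\Ost(L))$. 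For the cusp-form piece the paper instead uses the exact (non-asymptotic) dimension formula for vector-valued forms at a bounded weight. If you tried to repair your argument by letting $a\to\infty$, you would lose the condition $a<n$; if you kept $a$ fixed, you would need an exact dimension formula for orthogonal modular forms, which is not available. In short, the paper's separation into ``effective via Jacobi lifting'' plus ``big via HM volume asymptotics'' is not merely organizational: it is what allows each tool to be used in the regime where it is actually valid.
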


The connection with Theorem \ref{main} 
is provided by Proposition \ref{q-cyclic overlattice}, according to which 
for an arbitrary even lattice $L$ we can find a quasi-cyclic overlattice $L'\supset L$ such that 
$A_{L'}$ has exponent no smaller than half of $A_L$. 
We have the inclusion ${\Ost}(L)\subset{\Ost}(L')$ inside ${\rm O}(L\otimes{\Q})$, 
which induces a finite morphism ${\FL}\to\mathcal{F}_{L'}$. 
In particular, if $n$ is so large that $\mathcal{F}_{L'}$ is of general-type for any quasi-cyclic lattice $L'$ of signature $(2, n)$, 
then so is ${\FL}$ for any even lattice $L$ of the same signature. 
Next we fix a (not sufficiently large) $n\geq15$. 
There exists a natural number $D$ such that 
for quasi-cyclic lattices $L'$ of signature $(2, n)$, $\mathcal{F}_{L'}$ is of general type 
whenever the exponent of $A_{L'}$ exceeds $D$. 
Therefore, for general even lattices $L$ of signature $(2, n)$, 
$\mathcal{F}_{L}$ is of general type if the exponent of $A_L$ exceeds $2D$. 
Since there are only finitely many abelian groups of bounded exponent ($\leq2D$) and length ($\leq n+2$) 
and since there are only finitely many even lattices of fixed signature and discriminant group, 
the finiteness follows at this fixed $n$. 
Thus Theorem \ref{main} can be deduced from Theorem \ref{reduce to quasi-cyclic}.

In the next \S \ref{decomposition} we explain the plot of the proof of Theorem \ref{reduce to quasi-cyclic}.

\subsection{The two obstructions}\label{decomposition}

We shall reduce the proof of Theorem \ref{reduce to quasi-cyclic} to that of Theorems \ref{effective} and \ref{bigness}. 
These two sub-theorems will be proved in the rest of the article. 
We essentially follow the approach proposed by Gritsenko-Hulek-Sankaran \cite{G-H-S3}, 
which in \cite{G-H-S3} was used to prove general-typeness of ${\FL}$ for some special lattices $L$. 
What we eventually show is that in the range $n\geq15$ 
a generalization of their argument actually applies to all but finitely many quasi-cyclic lattices.

Let $L$ be an even lattice of signature $(2, n)$ with $n\geq9$ (which we do not assume to be quasi-cyclic for the moment). 
By \cite{G-H-S1} Theorem 2.1, we can find a projective toroidal compactification ${\FLcpt}$ of ${\FL}$ 
that has only canonical quotient singularities and 
has no branch divisor in the boundary. 
The second condition means that for each rational boundary component $F$ of ${\DL}$ 
the projection $X_{\Sigma}(F)\to{\FLcpt}$ from the torus embedding $X_{\Sigma}(F)$ 
is unramified at general points of the boundary divisors over $F$.

Let $\mathcal{L}$ be the ${\Q}$-line bundle over ${\FLcpt}$ of modular forms of weight $1$, 
$\Delta\subset{\FLcpt}$ the boundary divisor of the compactification, 
and $B\subset{\FLcpt}$ the branch divisor of the projection ${\DL}\to{\FL}$. 
According to \cite{G-H-S1} Theorem 2.1, every irreducible component of $B$ is defined by a reflection, 
and in particular has ramification index $2$. 
The canonical divisor of ${\FLcpt}$ is then ${\Q}$-linearly equivalent to (cf.~\cite{G-H-S1} \S 1)
\begin{equation*}\label{eqn:cano div}
K_{{\FLcpt}} \sim_{{\Q}} n\mathcal{L} - \Delta - B/2. 
\end{equation*}
The ${\Q}$-bundle $n\mathcal{L}$ is big 
as it is the pullback of an ample ${\Q}$-line bundle on the Baily-Borel compactification. 
It will be our source of deriving bigness of $K_{{\FLcpt}}$. 
We will view the remaining components $-\Delta$ and $-B/2$ as obstructions for $K_{{\FLcpt}}$ to be big,  
and deal with them separately by dividing the weight $n$ of $n\mathcal{L}$. 
To be precise, we will prove the following.

\begin{theorem}\label{effective}
There are only finitely many quasi-cyclic lattices $L$ of signature $(2, n)$ with $n\geq15$ such that 
the divisor $n'\mathcal{L}-\Delta$ of ${\FLcpt}$ is not effective for any $n'<n$. 
In other words, for all but finitely many quasi-cyclic $L$ with $n\geq15$ 
we can find a cusp form of weight $<n$ with respect to ${\Ost}(L)$.  
\end{theorem}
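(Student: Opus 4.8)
The plan is to produce, for all but finitely many quasi-cyclic $L$ with $n\ge15$, a non-zero cusp form of some weight $w<n$ with respect to ${\Ost}(L)$; such a form is a section of $w\mathcal{L}$ vanishing on $\Delta$, so that $w\mathcal{L}-\Delta$ is effective. The construction will be the Gritsenko (additive) lift, so the first step is to reduce to the case $L=2U\oplus L_0$ with $L_0$ negative definite of rank $n-2$. For this I would invoke the classification of discriminant forms of quasi-cyclic lattices (Appendix): quasi-cyclicity heavily restricts $A_L$ — a lattice with anisotropic discriminant form is quasi-cyclic, and conversely quasi-cyclicity caps the $2$-part length and, because each new odd prime dividing $|A_L|$ costs rank, it forces $l(A_L)\le n-2$ once $n\ge15$ (with at most finitely many exceptional lattices). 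Then $2U$ splits off by Nikulin's criterion, and we may assume $L=2U\oplus L_0$. (If only an overlattice $L'\supset L$ of this shape is available, a cusp form for ${\Ost}(L')$ is automatically one for ${\Ost}(L)\subset{\Ost}(L')$ of the same weight, and overlattices of quasi-cyclic lattices are again quasi-cyclic, so the same argument applies; likewise one may use the analogous lift for Jacobi forms of index $N$ if $L=U\oplus U(N)\oplus L_0$.)

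Granting $L=2U\oplus L_0$, the Gritsenko lift takes a Jacobi form of weight $w$ and index $1$ for $L_0$ to a modular form of weight $w$ for ${\Ost}(L)$; it is injective, and — since $L$ is quasi-cyclic, so every $1$-dimensional cusp of ${\FL}$ dominates the standard $0$-dimensional cusp (\S\ref{ssec:reduction}) — it sends Jacobi cusp forms to cusp forms. By the theta decomposition, an index-$1$ Jacobi (cusp) form of weight $w$ for $L_0$ is the same datum as a $\C[A_{L_0}]$-valued (cusp) form of weight $\kappa=w-\tfrac{n-2}{2}$ for the Weil representation $\rho=\rho_{L_0}$. So it suffices to exhibit a non-zero vector-valued cusp form of some weight $\kappa<\tfrac{n+2}{2}$ for $\rho$, where $\dim\rho=|A_{L_0}|=|\det L|$. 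For large $n$ this is immediate: $\kappa$ may be taken around $n/2\gg12$, and one already has, for instance, the product of a weight-$12$ cusp form for $\SL$ with a vector-valued Eisenstein series for $\rho$. The real content is the range of moderate $n$, where the admissible $\kappa$ is forced to be small but $|A_{L_0}|$ may be large.

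The remaining step is a dimension estimate. Using the dimension formula for spaces of vector-valued modular forms for Weil representations (Borcherds, Skoruppa, or the author's own computations), $\dim M_\kappa(\rho)$ grows linearly in $\dim\rho$, with leading term $\tfrac{\kappa-1}{12}\dim\rho$ plus elliptic-point corrections of size at most $\tfrac{7}{12}\dim\rho$, while the Eisenstein subspace has dimension bounded by the number of $\rho$-relevant isotropic elements of $A_{L_0}$, which — $A_{L_0}$ being the discriminant form of a quasi-cyclic lattice — is only $O(|A_{L_0}|^{1/2})$. Hence $S_\kappa(\rho)\neq0$ as soon as $|A_{L_0}|$ exceeds a bound that depends only on $\kappa$, i.e.\ only on $n$; combined with $\kappa<\tfrac{n+2}{2}$, this can fail only for lattices with both $n$ and $|\det L|$ small, a finite set. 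I expect the main obstacle to lie in making this effective in the delicate range, roughly $15\le n\le 26$: there the admissible $\kappa$ is so small that the elliptic-point contributions, which may be a genuine positive proportion of $\dim\rho$ and enter with a negative sign, are not dominated by crude size bounds, so one has to control the eigenvalue distributions of $\rho(S)$ and $\rho(ST)$, and treat separately the half-integral-weight case ($n-2$ odd, so $\rho$ is a true $\Mp$-representation) and the smallest values $n=15,16$. A secondary delicate point is the first step, namely extracting from the Appendix the precise set of quasi-cyclic $L$ that resist the reduction to $2U\oplus L_0$ and confirming it is finite.
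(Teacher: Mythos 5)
Your proposal follows essentially the same route as the paper: split off $2U$ using the length bound on quasi-cyclic discriminant forms, Gritsenko-lift Jacobi cusp forms (cuspidal precisely because $L$ is quasi-cyclic), pass via the theta decomposition to vector-valued forms for the Weil representation, and force $S_\kappa(\rho)\neq 0$ for some $\kappa$ slightly above $7$ with $\kappa<\tfrac{n+2}{2}$ by the Skoruppa--Borcherds dimension formula together with Bruinier-type control of the $\rho(S)$- and $\rho(ST)$-eigenvalue sums, the quasi-cyclic hypothesis bounding $|G_2|$ and $|G_3|$ so that the elliptic error is $O(1)$. The only real divergences are cosmetic --- you bound the Eisenstein contribution by counting integral-norm elements (which is $O(\sqrt{|A_L|})$ for quasi-cyclic forms) where the paper simply uses $\alpha_3+\alpha_4\le d_\pm$, and you hedge the $2U$-splitting step which in fact holds with no exceptions for $n\ge 8$ --- and the delicate points you flag (the threshold constant at $n=15,16$ and the eigenvalue distributions) are exactly where the paper's Gauss-sum estimates and the choice $l=15/2$ or $8$ do the work.
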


\begin{theorem}\label{bigness}
Fix a rational number $a>0$.  
Then there are only finitely many even lattices $L$ 
of signature $(2, n)$ and containing $2U$ 
such that the ${\Q}$-divisor $a\mathcal{L}-B/2$ of ${\FL}$ is not big.  
\end{theorem}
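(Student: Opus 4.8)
Here is how I would attack Theorem~\ref{bigness}.

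The plan is to count modular forms of large weight vanishing along the ramification locus, to reduce the bigness of $a\mathcal{L}-B/2$ to an inequality between Hirzebruch--Mumford volumes of $L$ and of the lattices orthogonal to the reflective vectors of $L$, and then to force that inequality by the explicit volume formula once $|\det L|$ or $n$ is large. First I would record the shape of the branch divisor. By \cite{G-H-S1} Theorem 2.1 we may write $B=\sum_{i=1}^{k}\mathcal{D}_{r_i}$, where $\mathcal{D}_{r_i}\subset\FL$ is the image of the hyperplane section $\DL\cap r_i^{\perp}$ of $\DL\subset\proj(L\otimes\C)$ and $r_i\in L$ is a primitive vector whose reflection $\sigma_{r_i}$ lies in $\Ost(L)$. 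An elementary analysis of the condition $\sigma_r\in\Ost(L)$ gives $\divi(r)\in\{1,2\}$ and $(r,r)\in\{-2,-4\}$, so $|\det(r^{\perp}\cap L)|$ differs from $|\det L|$ only by a factor in $\{\tfrac12,1,2\}$. Because $L$ contains $2U$, Eichler's criterion makes $\Ost(L)$ act transitively on primitive vectors of a fixed negative norm and fixed image in $A_L$; since that image lies in $A_L[2]$ here, $k\le 2+2^{\ell_2(A_L)}\le 2+2^{n}$, a bound depending only on $n$. Finally, the conormal bundle of $\DL\cap r^{\perp}$ in $\DL$ is the restriction of $\mathcal{L}$ (it is a hyperplane section), and the stabiliser of $r$ in $\Ost(L)$ restricts on $\DL\cap r^{\perp}$ to an arithmetic group that contains $\Ost(r^{\perp}\cap L)$ (stable isometries of $r^{\perp}\cap L$ extend to $L$ fixing $r$) and is commensurable with it.

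Next I set up a numerical criterion. Recall that $a\mathcal{L}-B/2$ is big precisely when, for large $m$, the space of modular forms of weight $ma$ for $\Ost(L)$ vanishing to order $\ge m$ along each $\DL\cap r_i^{\perp}$ has dimension $\gg m^{n}$ (the factor two being the ramification index). Since the conormal bundle is $\mathcal{L}$, the $j$-th normal Taylor coefficient of such a form along $\DL\cap r_i^{\perp}$ is a modular form of weight $ma+j$ for that stabiliser, so the space of $j$-th coefficients has dimension at most $\dim M_{ma+j}(\Ost(r_i^{\perp}\cap L))$. Feeding this and the Hirzebruch--Mumford proportionality $\dim M_k(\Gamma)\sim \kappa_{d}\,\HM(\Gamma)\,k^{d}$ (for an arithmetic group $\Gamma$ on a type IV domain of dimension $d$, with $\kappa_d>0$ a universal constant) into the obvious count, one obtains
\begin{equation}\label{plan-ineq}
\dim\{\,\text{sections of }m(a\mathcal{L}-B/2)\,\}\;\ge\;\Big(c_1(n,a)\,\HM(\Ost(L))-c_2(n,a)\sum_{i=1}^{k}\HM\big(\Ost(r_i^{\perp}\cap L)\big)\Big)m^{n}+o(m^{n})
\end{equation}
with $c_1(n,a),c_2(n,a)>0$ explicit and depending only on $n$ and $a$. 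Hence $a\mathcal{L}-B/2$ is big as soon as $\sum_{i=1}^{k}\HM(\Ost(r_i^{\perp}\cap L))<c(n,a)\,\HM(\Ost(L))$, where $c(n,a)=c_1/c_2>0$.

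It remains to verify this volume inequality for all but finitely many $L$. Here I would use the explicit formula of Gritsenko--Hulek--Sankaran: for an even lattice $M$ of signature $(2,m)$, $\HM(\Ost(M))=|\det M|^{(m+1)/2}\cdot\gamma_m\cdot\prod_p(\text{local density of }M)$, with $\gamma_m$ depending only on $m$. Using $|\det(r^{\perp}\cap L)|\asymp|\det L|$ together with the standard bounds on local densities (which make the ratio of the local factors of $r^{\perp}\cap L$ and $L$ of size $|\det L|^{o(1)}$ as $|\det L|\to\infty$), each summand is $O_{n}\big(|\det L|^{-1/2+o(1)}\,\gamma_{n-1}/\gamma_n\big)\cdot\HM(\Ost(L))$, so the left-hand side of the inequality is at most
\begin{equation*}
k\cdot C(n)\cdot|\det L|^{-1/2+o(1)}\cdot\frac{\gamma_{n-1}}{\gamma_n}\cdot\HM(\Ost(L)).
\end{equation*}
For fixed $n$ this is less than $c(n,a)\,\HM(\Ost(L))$ once $|\det L|$ is large enough, and there are only finitely many even lattices of signature $(2,n)$ with bounded $|\det L|$ (an indefinite lattice of rank $\ge3$ is determined by its genus, i.e.\ by its signature and discriminant form). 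For $n$ large, $\gamma_{n-1}/\gamma_n$ decays super-exponentially in $n$ --- faster than $k\le 2+2^{n}$ grows and faster than $c(n,a)$ decays --- so the inequality holds for every $L$. Combining the finitely many small values of $n$ with the large ones yields the asserted finiteness.

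The hard part is the last step. One must bound, uniformly in $L$, the ratio of the local-density factors of $r^{\perp}\cap L$ and $L$ by $|\det L|^{1/2-\delta}$ for some fixed $\delta>0$, and balance this against the growth of the number $k$ of branch components in the $2$-length of $A_L$ (both being governed by the $2$-adic structure of $A_L$); and one must make the comparison between $\gamma_{n-1}$ and $\gamma_n$ --- equivalently, the effective form of \eqref{plan-ineq} --- quantitative enough to pin down the threshold $n_0$, which is where explicit ranges such as $n\ge42$ come from. By contrast, the structure of the branch divisor and the numerical criterion \eqref{plan-ineq} are routine once Hirzebruch--Mumford proportionality is available.
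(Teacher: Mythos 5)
Your overall strategy coincides with the paper's: reduce bigness of $a\mathcal{L}-B/2$ to the volume inequality $\sum_i \HM(\Gamma_i)/\HM({\Ost}(L))<c(n,a)$ via quasi-pullback (normal Taylor coefficients along the branch divisors, which have conormal bundle $\mathcal{L}$) together with Hirzebruch--Mumford proportionality, and then verify that inequality from the explicit volume formula, gaining a factor $|A_L|^{-1/2}$ at fixed $n$ and a super-exponentially decaying constant as $n\to\infty$. That part of your plan is sound and is exactly Proposition \ref{big via HM volume} plus the scheme of \S\ref{ssec:proof bigness}.

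The genuine gap is in your description of the branch divisor. The quotient is by ${\Ost}(L)$ acting on ${\DL}\subset{\proj}(L\otimes{\C})$, and $-\mathrm{id}$ acts trivially on projective space; hence the hyperplane ${\DL}\cap r^{\perp}$ lies in the ramification locus whenever $\pm\sigma_r\in{\Ost}(L)$, i.e.\ whenever $\sigma_r\in{\Or}(L)$ and $\sigma_r\equiv\pm\mathrm{id}$ on $A_L$ --- not only when $\sigma_r\in{\Ost}(L)$. The vectors with $\sigma_r\equiv-\mathrm{id}$ on $A_L$ (the paper's type B) occur precisely when $A_L\simeq{\Z}/D\oplus({\Z}/2)^m$, and they have $(r,r)=-D$ or $-2D$ with $D$ the exponent of $A_L$, which is unbounded; correspondingly $|\det(r^{\perp}\cap L)|/|\det L|$ equals $1/D$, $4/D$ or $2/D$ rather than a factor in $\{1/2,1,2\}$. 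So your claims ``$\divi(r)\in\{1,2\}$, $(r,r)\in\{-2,-4\}$'' and the ensuing determinant comparison fail for roughly half of the possible branch components. These cases do not sink the theorem --- their orthogonal complements have tiny discriminant groups, so the volume ratios come out even more favorable --- but they require their own structural analysis (the constraint on $A_L$, the computation of $A_K$ by gluing, the count of ${\Ost}(L)$-orbits of such vectors via the Eichler criterion, and the corresponding local-density ratios), which in the paper occupies the BI/BII/BIII parts of \S\ref{sec:sum over branch div} and \S\ref{sec:vol ratio}. Your bound $k\le 2+2^{n}$ on the number of components must also be revisited to include these orbits, though a crude count would still suffice for the final asymptotics.
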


Note that in Theorem \ref{effective} the lattices are assumed to be quasi-cyclic, 
while in Theorem \ref{bigness} they are only assumed to contain $2U$. 
By Corollary \ref{q-cyclic contain 2U} quasi-cyclic lattices of signature $(2, n)$ with $n\geq8$ always contain $2U$.

Now Theorem \ref{reduce to quasi-cyclic} can be derived as follows. 
The combination of Theorem \ref{effective} and Theorem \ref{bigness} with $a=1$ 
implies that for all but finitely many quasi-cyclic lattices $L$ with $n\geq15$, 
we can find a division 
\begin{equation}\label{eqn:division}
n\mathcal{L} - \Delta - B/2 = (n'\mathcal{L}-\Delta) + (n''\mathcal{L}-B/2), \quad n'+n''=n, 
\end{equation}
such that $n'\mathcal{L}-\Delta$ is effective and that $n''\mathcal{L}-B/2$ is big. 
Hence $K_{{\FLcpt}}$ is big for those $L$. 
Since ${\FLcpt}$ has canonical singularity, a resolution of ${\FLcpt}$ is of general type. 
This proves Theorem \ref{reduce to quasi-cyclic}.

The proof of Theorem \ref{effective} will be given in \S \ref{sec:cusp form}, 
and that of Theorem \ref{bigness} will occupy \S \ref{sec:bigness setup} -- \S \ref{sec:vol ratio}. 

\begin{remark}\label{eff calcu}
Here is an effective process to see whether a given ${\FL}$ is of general type with the present method. 
(See the subsequent sections for detail, 
especially Remarks \ref{effective calcu cusp form} and \ref{effective bigness} for actual computation.) 
\begin{enumerate}
\item First find a weight $l$ of which there exists a cusp form of type $\rho_L$. 
\item We want to try $n'=n/2-1+l$ and $n''=n/2+1-l$ in \eqref{eqn:division}. 
So substitute $a=n/2+1-l$ in the right side of \eqref{eqn: big via HM volume}. 
\item Estimate the left side of \eqref{eqn: big via HM volume} 
using the results of \S \ref{sec:sum over branch div} and \S \ref{sec:vol ratio}, 
and check whether the inequality holds. 
\end{enumerate}
\end{remark}


\section{Construction of cusp forms}\label{sec:cusp form}

This section is devoted to the proof of Theorem \ref{effective}. 
In order to construct a cusp form,  
we use the method of Jacobi lifting by Gritsenko \cite{Gr}. 
Thus we need to produce Jacobi cusp forms. 
For that we translate Jacobi forms into vector-valued modular forms of one variable, 
and analyze a dimension estimate for the latter.

\subsection{Jacobi forms}\label{ssec:Jacobi form}

We begin by recalling Jacobi forms of $1+$several variables. 
Let $N=M(-1)$ be an arbitrary positive-definite even lattice, where $M$ is negative-definite. 
A Jacobi form of index $1$ and weight $k\in{\N}$ for $N$ is 
a holomorphic function $\phi(\tau, Z)$ on ${\HH}\times(N\otimes{\C})$ 
satisfying the transformation rules 
\begin{equation*}
\phi(\tau, \, Z+l\tau+m) = e^{-\pi i(l, l)\tau-2\pi i(l, Z)}\phi(\tau, Z) 
\end{equation*}
for $l, m\in N$ and   
\begin{equation*}\label{eqn:Jacobi trans rule I}
\phi\left(\gamma\tau, \, \frac{Z}{c\tau+d}\right) = 
(c\tau+d)^k{\exp}\left( \frac{\pi ic(Z, Z)}{c\tau+d}\right) \phi(\tau, Z)   
\end{equation*}
for $\gamma = \begin{pmatrix} a & b \\ c & d \end{pmatrix} \in {\SL}$, 
and having Fourier expansion of the form 
\begin{equation*}
\phi(\tau, Z) 
= \sum_{\begin{subarray}{c} n\in{\N},\, l\in N^{\vee} \\ (l, l)\leq2n \end{subarray}} 
c(n, l)e^{2\pi in\tau+2\pi i(l, Z)}. 
\end{equation*} 
If $c(n, l)=0$ for any $(n, l)$ with $(l, l)=2n$, $\phi$ is called a Jacobi cusp form. 
We denote by $J_{k,1}(N)$ the space of Jacobi forms of weight $k$ and index $1$ for $N=M(-1)$. 

The connection with modular forms of orthogonal type is given by 
the following Jacobi lifting. 

\begin{theorem}[Gritsenko \cite{Gr}, \cite{G-H-S1}]\label{Jacobi lifting}
For $k\geq4$ there exists an injective linear map 
from $J_{k,1}(M(-1))$ to the space of modular forms of weight $k$ with respect to ${\Ost}(2U\oplus M)$. 
If the lattice $M$ is quasi-cyclic, 
this maps Jacobi cusp forms to cusp forms. 
\end{theorem}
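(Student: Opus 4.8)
The plan is to combine the explicit form of the Jacobi lift near the standard cusp with the structure of the rational boundary components of $\FL$, where $L=2U\oplus M$. Let $c_0$ be the standard $0$-dimensional cusp of $\FL$, attached to a rank-one isotropic sublattice $I_0$ inside one of the $U$-summands, and realize $\DL$ as a tube domain over a component $\mathcal{C}$ of the positive cone of the Lorentzian lattice $I_0^{\perp}/I_0$, so that a modular form $F$ for $\Ost(L)$ has there a Fourier expansion $F(Z)=\sum_{\lambda}a(\lambda)\,e^{2\pi i(\lambda,Z)}$ indexed by vectors $\lambda\in\overline{\mathcal{C}}$. The first step is to recall, from the construction of the Jacobi lifting (\cite{Gr}, \cite{G-H-S1}), that the coefficients $a(\lambda)$ of the lift of $\phi=\sum c(n,l)q^{n}\zeta^{l}$ are $\Z$-linear combinations of Fourier coefficients $c(n',l')$ of $\phi$ whose discriminant $2n'-(l',l')$ is a positive rational multiple of $(\lambda,\lambda)$; hence, if $\phi$ is a Jacobi cusp form --- so that $c(n,l)=0$ whenever $(l,l)=2n$ --- then $a(\lambda)=0$ for every $\lambda$ with $(\lambda,\lambda)=0$, in particular $a(0)=0$, and the Fourier expansion of $F$ at $c_0$ is supported in the interior of $\mathcal{C}$.

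Next I would reduce ``$F$ is a cusp form'' to a statement about $1$-dimensional cusps alone. A modular form for $\Ost(L)$ is a cusp form precisely when it vanishes on every boundary component of the Baily-Borel compactification of $\FL$; and since $L=2U\oplus M$ has Witt index $2$, each primitive isotropic vector $v$ of $L$ has $v^{\perp}/v$ of Witt index $1$, hence $\Q$-isotropic, so $v$ lies in a rank-two isotropic sublattice of $L$, and therefore every $0$-dimensional cusp of $\FL$ lies in the closure of some $1$-dimensional cusp. It is thus enough to prove that $F$ vanishes identically along each $1$-dimensional cusp. This is where the hypothesis is used: since $A_L\cong A_M$, the lattice $L$ is quasi-cyclic whenever $M$ is, and by the geometric meaning of quasi-cyclicity recalled in \S \ref{ssec:reduction} (see \cite{G-H-S1} \S 4), every $1$-dimensional cusp of $\FL$ contains $c_0$ in its closure.

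For the main step, fix a $1$-dimensional cusp, represented by a rank-two primitive isotropic sublattice $E\subset L$. Since $c_0$ lies in its closure, after replacing $E$ by a suitable $\Ost(L)$-translate --- which does not change the boundary curve --- we may assume $I_0\subset E$; let $v_E\in I_0^{\perp}/I_0$ be a primitive isotropic vector generating $E/I_0$. Restricting $F$ to the boundary curve over $E$ retains exactly the Fourier terms with $(\lambda,v_E)=0$, namely $\sum_{(\lambda,v_E)=0}a(\lambda)\,e^{2\pi i(\lambda,Z)}$ (the terms with $(\lambda,v_E)>0$ die in the limit, and no term has $(\lambda,v_E)<0$). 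Since $v_E$ is null and $\mathcal{C}$ is a forward light cone, the hyperplane $v_E^{\perp}$ meets $\overline{\mathcal{C}}$ only in the ray $\R_{\geq 0}v_E$; hence any $\lambda\in\overline{\mathcal{C}}$ with $(\lambda,v_E)=0$ satisfies $(\lambda,\lambda)=0$, so by the first step $a(\lambda)=0$ for all such $\lambda$. Therefore $F$ vanishes identically along this boundary curve, and as the cusp was arbitrary, $F$ is a cusp form.

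The one genuinely nontrivial ingredient is the geometric input used in the second paragraph --- that quasi-cyclicity of $M$ forces every $1$-dimensional cusp of $\FL$ to contain $c_0$ in its closure --- which is precisely the point at which the hypothesis on $M$ cannot be dropped; it amounts to verifying, inside an arbitrary rank-two isotropic sublattice of $L$, that some rank-one isotropic sublattice of it lies in the $\Ost(L)$-orbit of $I_0$, a condition controlled by the isotropic subgroups of $A_L$ (as in \cite{G-H-S1} \S 4; compare also the Appendix). Granting this, the two remaining ingredients --- the support of the Fourier expansion of the lift and the elementary light-cone computation --- are routine.
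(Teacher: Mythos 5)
This theorem is not proved in the paper: it is quoted from the literature, with the lifting itself referred to \cite{Gr} Theorem 3.1 and the cusp-form assertion to \cite{G-H-S1} Theorem 4.2, and the mechanism you describe is exactly the one the paper alludes to in \S\ref{ssec:reduction} (quasi-cyclicity forces every $1$-dimensional cusp to contain the standard $0$-dimensional cusp in its closure, and the Fourier expansion of the lift of a Jacobi cusp form at that cusp is supported in the open cone). Your sketch of the second assertion is correct and is essentially the argument of \cite{G-H-S1} \S 4: the support computation for $a(\lambda)$, the light-cone observation that $v_E^{\perp}\cap\overline{\mathcal{C}}=\R_{\geq0}v_E$, and the reduction to $1$-dimensional cusps via the Witt index are all sound. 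Note only that you take the first assertion (existence, modularity and injectivity of the lift for $k\geq4$) entirely for granted, and that the one lemma you defer --- that quasi-cyclicity of $A_L$ guarantees each rank-two isotropic sublattice contains a primitive isotropic vector of divisor $1$, hence in the $\Ost(L)$-orbit of $I_0$ --- is precisely the content of \cite{G-H-S1} Theorem 4.2; these deferrals coincide with the paper's own.
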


The lifting can be defined even when $k<4$ provided that the Fourier coefficient $c(0, 0)$ vanishes, 
but we do not need that. 
See \cite{Gr} Theorem 3.1 for the explicit form of lifting. 
The assertion on cusp forms was originally proved in \cite{Gr} 
under the assumption that $M$ is maximal; 
later it was extended in \cite{G-H-S1} Theorem 4.2 to the present version. 

Let ${\Mp}$ be the metaplectic double cover of ${\SL}$ and 
\begin{equation*}
\rho_M\colon{\Mp}\to{\rm U}({\C}[A_M])
\end{equation*} 
be the Weil representation attached to $M$, for which we follow the convention of \cite{Bo}. 
Jacobi forms correspond to modular forms of type $\rho_M$ as follows (cf.~\cite{Gr}). 
For each $\lambda\in A_N$ we consider the theta function 
\begin{equation*}
\theta_{N}^{\lambda}(\tau, Z) = \sum_{l\in N+\lambda} e^{\pi i(l, l)\tau+2\pi i(l, Z)}. 
\end{equation*}
Then a Jacobi form $\phi\in J_{k,1}(N)$ can be uniquely expanded as 
\begin{equation*}
\phi(\tau, Z) = \sum_{\lambda\in A_N} \phi_{\lambda}(\tau)\theta_{N}^{\lambda}(\tau, Z)
\end{equation*}
for some ${\C}[A_N]$-valued function 
$\Phi(\tau) = (\phi_{\lambda}(\tau))_{\lambda\in A_N}$ on ${\HH}$. 
We shall identify $A_N$ with $A_M=A_{N(-1)}$ naturally and view $\Phi(\tau)$ as ${\C}[A_M]$-valued. 
Note that $\rho_M=\rho_N^{\vee}$ under this identification. 
Then the transformation formula of $(\theta_N^{\lambda})_{\lambda}$ under ${\Mp}$ 
tells that $\Phi(\tau)$ is a modular form of weight $k-{\rm rk}(M)/2$ and type $\rho_M$ for ${\Mp}$. 

For a half integer $l>0$ we denote by $M_l(\rho_M)$ 
the space of modular forms of weight $l$ and type $\rho_M$ for ${\Mp}$, 
and $S_{l}(\rho_M)\subset M_l(\rho_M)$ the subspace of cusp forms. 

\begin{proposition}[cf.~\cite{Gr}]\label{Jacobi form = rhoM-type form}
The correspondence 
$\sum_{\lambda}\phi_{\lambda}\theta_{N}^{\lambda} \mapsto (\phi_{\lambda})_{\lambda}$ 
defines an isomorphism 
\begin{equation*}
J_{k,1}(M(-1)) \simeq M_{k-{\rm rk}(M)/2}(\rho_M), 
\end{equation*}
which preserves the subspaces of cusp forms. 
\end{proposition}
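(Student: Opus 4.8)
The plan is to route the proof through the theta decomposition, matching one by one the three defining conditions of a Jacobi form — the elliptic transformation law, the $\SL$-transformation law, and the Fourier support condition — with the three defining conditions of a vector-valued modular form of type $\rho_M$ for $\Mp$. This is essentially the classical argument of Gritsenko \cite{Gr}; I indicate the steps. First, at the level of Fourier expansions, write $\phi = \sum c(n, l)\, q^n \zeta^l$ with $q = e^{2\pi i \tau}$, $\zeta^l = e^{2\pi i (l, Z)}$, $n \in \N$, $l \in N^{\vee}$. The $l = 0$ case of the elliptic law is $N$-periodicity in $Z$, which is exactly what produces this $\zeta$-expansion over $N^{\vee}$; the general quasi-periodicity under $Z \mapsto Z + l\tau$ ($l \in N$), after comparing coefficients, forces $c(n, l)$ to depend only on the pair $(2n - (l, l),\ l \bmod N)$. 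Hence for each $\lambda \in A_N$ and any $l \in N^{\vee}$ with $l \equiv \lambda \pmod N$, the series $\phi_{\lambda}(\tau) := \sum_{j} c\bigl(j + (l, l)/2,\, l\bigr)\, q^{j}$ is well defined (independent of the choice of such $l$), is holomorphic on $\HH$ since it inherits convergence from $\phi$, and a direct check shows $\sum_{\lambda} \phi_{\lambda}\theta_N^{\lambda} = \phi$. Uniqueness of the decomposition holds because the $\theta_N^{\lambda}(\tau, \cdot)$ are linearly independent over $\C$ for generic $\tau$, their $Z$-Fourier expansions being supported on disjoint cosets of $N$ in $N^{\vee}$. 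Running this backwards, any tuple of holomorphic functions $\phi_{\lambda}$ whose $q$-expansions lie in the appropriate coset of $\Z$ assembles to a holomorphic function on $\HH \times (N \otimes \C)$ satisfying the elliptic transformation law.

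Next I would transport the $\SL$-transformation law through the decomposition. The key input is the classical transformation formula for the vector $\theta_N = (\theta_N^{\lambda})_{\lambda}$ under $\Mp$,
\begin{equation*}
\theta_N\!\left(\gamma \tau,\ \frac{Z}{c\tau + d}\right) = (c\tau + d)^{\rk(N)/2}\, \exp\!\left(\frac{\pi i c (Z, Z)}{c\tau + d}\right) \rho_N(\gamma)\, \theta_N(\tau, Z),
\end{equation*}
proved by Poisson summation on the generator $S$ and directly on $T$, and which is precisely the formula defining $\rho_N$ in the convention of \cite{Bo}; the metaplectic cover enters because $\rk(N)$ may be odd, so that $(c\tau + d)^{\rk(N)/2}$ is genuinely two-valued. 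Substituting this formula together with the weight-$k$ law of $\phi$ into $\phi(\gamma\tau, Z/(c\tau + d)) = \sum_{\lambda} \phi_{\lambda}(\gamma\tau)\, \theta_N^{\lambda}(\gamma\tau, Z/(c\tau + d))$, the factor $\exp(\pi i c (Z, Z)/(c\tau + d))$ cancels, and comparing coefficients in the now unique theta decomposition gives $\Phi(\gamma\tau) = (c\tau + d)^{k - \rk(N)/2}\, \rho_N^{\vee}(\gamma)\, \Phi(\tau)$. Since $\rk(N) = \rk(M)$ and $\rho_N^{\vee} = \rho_M$ under the identification $A_N \simeq A_M$ already recorded, $\Phi$ is a modular form of weight $k - \rk(M)/2$ and type $\rho_M$; the converse direction is the same computation read in reverse.

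Finally I would match the growth conditions. The Fourier support condition $(l, l) \le 2n$ on $\phi$ says exactly that each $\phi_{\lambda}$ has a $q$-expansion with only non-negative exponents, i.e.\ $\Phi$ is holomorphic at the unique cusp; and the Jacobi-cusp-form condition $c(n, l) = 0$ whenever $(l, l) = 2n$ says exactly that the exponent-$0$ term of every $\phi_{\lambda}$ vanishes, i.e.\ $\Phi \in S_{k - \rk(M)/2}(\rho_M)$. Since $\phi \mapsto (\phi_{\lambda})_{\lambda}$ and $\Phi \mapsto \sum_{\lambda} \phi_{\lambda}\theta_N^{\lambda}$ are visibly linear and mutually inverse, this is the asserted isomorphism, and it respects cusp forms. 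The one genuinely delicate point is pinning down the theta multiplier above so that it agrees with Borcherds's normalization of $\rho_M$ on the nose — in particular tracking the eighth root of unity attached to $S$ and checking that it is $\rho_N^{\vee}$, not $\rho_N$, that acts on the $\phi_{\lambda}$-side; everything else is bookkeeping with Fourier expansions.
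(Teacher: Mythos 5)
Your argument is correct and is exactly the route the paper takes: the paper does not spell out a proof but relies on the same theta decomposition, the $\Mp$-transformation formula of $(\theta_N^{\lambda})_{\lambda}$, and the identification $\rho_M=\rho_N^{\vee}$ sketched in the paragraph preceding the proposition, with the details deferred to Gritsenko's paper. Your fleshing-out of the elliptic/modular/cusp conditions, including the caveat about the dual representation and the eighth root of unity in Borcherds's normalization, matches that sketch.
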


A dimension formula for $S_l(\rho_M)$ is presented in \cite{Sk}, \cite{E-S} and \cite{Bo}. 
Below we follow the version in \cite{Bo}. 
Define elements $T, S, Z\in{\Mp}$ by 
\begin{equation*}
T=\left( \begin{pmatrix}1&1\\ 0&1\end{pmatrix}, 1 \right), \quad  
S=\left( \begin{pmatrix}0&-1\\ 1&0\end{pmatrix}, \sqrt{\tau} \right), \quad  
Z=\left( \begin{pmatrix}-1&0\\ 0&-1\end{pmatrix}, \sqrt{-1} \right). 
\end{equation*}
Let $\{ e_{\lambda} \}_{\lambda\in A_M}$ be the standard basis of ${\C}[A_M]$, 
and $W_+(M), W_-(M)\subset{\C}[A_M]$ be the subspaces spanned by vectors of the form 
$e_{\lambda}+e_{-\lambda}$,   
$e_{\lambda}-e_{-\lambda}$ respectively. 
The decomposition 
\begin{equation*}\label{eqn:Z-decomp}
{\C}[A_M] = W_+(M) \oplus W_-(M)   
\end{equation*}
equals to the eigendecomposition for $\rho_M(Z)$ with 
$\rho_M(Z)|_{W_{\pm}(M)}=\pm\sqrt{-1}^{{\rm rk}(M)}$. 
Since $Z$ is in the center of ${\Mp}$, 
this is actually a decomposition as an ${\Mp}$-representation. 
We shall denote 
\begin{equation*}
d_{\pm}(M) = {\dim}W_{\pm}(M), \qquad 
\rho_{M}^{\pm} = \rho_M|_{W_{\pm}(M)}. 
\end{equation*}
The automorphic condition for $Z$ implies that 
$M_l(\rho_M)$ can be nonzero only when $l+{\rm rk}(M)/2 \in {\Z}$. 
More precisely, modular forms in $M_l(\rho_M)$ take values 
in $W_+(M)$ if $l+{\rm rk}(M)/2$ is even, 
and in $W_-(M)$ if $l+{\rm rk}(M)/2$ is odd. 
To state the dimension formula, 
for any unitary matrix $X$ of size $d$ with eigenvalues 
$e(\beta_1), \cdots, e(\beta_d)$ with $0\leq\beta_i<1$ 
where $e(x)=e^{2\pi ix}$, 
we write 
\begin{equation*}
\alpha(X) = \sum_{i=1}^{d} \beta_i. 
\end{equation*}
Then we set 
\begin{eqnarray*}
\alpha_1^{\pm}(M) &=& \alpha(e(l/4)\rho_{M}^{\pm}(S)), \\ 
\alpha_2^{\pm}(M) &=& \alpha(e(-l/6)\rho_{M}^{\pm}(ST)^{-1}), \\ 
\alpha_3^{\pm}(M) &=& \alpha(\rho_{M}^{\pm}(T)), \\   
\alpha_4^+(M) &=& \# (\{ \lambda\in A_M \: | \: (\lambda, \lambda)\in 2{\Z} \}/\pm1), \\ 
\alpha_4^-(M) &=& \# (\{ \lambda\in A_M \: | \: (\lambda, \lambda)\in 2{\Z}, \: 2\lambda\ne0  \}/\pm1).    
\end{eqnarray*}

\begin{proposition}[\cite{Sk}, \cite{E-S}, \cite{Bo}]\label{dim cusp form}
The dimension of $S_l(\rho_M)$ is bounded by  
\begin{equation}\label{eqn:dim formula Sl(rhoM)}
{\dim}\: S_l(\rho_M) \geq  
d_{\pm}(M) + d_{\pm}(M)\cdot l/12 - \sum_{i=1}^{4}\alpha_i^{\pm}(M), 
\end{equation}
where $\pm$ is chosen according to the parity of $l+{\rm rk}(M)/2\in{\Z}$. 
This becomes equality when $l>2$. 
\end{proposition}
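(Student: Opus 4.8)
The plan is to derive the dimension bound \eqref{eqn:dim formula Sl(rhoM)} from the Riemann--Roch / Selberg trace formula applied to the congruence subgroup of $\Mp$ on which $\rho_M$ factors through a finite quotient, exactly as in \cite{Sk}, \cite{E-S}, \cite{Bo}. Concretely, fix the sign $\pm$ according to the parity of $l+\rk(M)/2$, so that $S_l(\rho_M)$ consists of cusp forms valued in $W_{\pm}(M)$ and we are really computing $\dim S_l(\rho_M^{\pm})$. Since $\rho_M^{\pm}$ is a finite-dimensional unitary representation of $\Mp$ with finite image, the space $M_l(\rho_M^{\pm})$ is the space of holomorphic sections of a vector bundle of rank $d_{\pm}(M)$ on the (compact) modular curve $X = \overline{\Gamma\backslash\HH}$ for a suitable congruence $\Gamma$, twisted by the automorphy factor of weight $l$. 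The first step is to apply Riemann--Roch (equivalently, the holomorphic Lefschetz fixed point formula on the orbifold $\mathbb{P}^1$ presented as the quotient of $\HH$ by $\PSL_2(\Z)$) to get an exact formula
\begin{equation*}
\dim M_l(\rho_M^{\pm}) - \dim S_{2-l}(\rho_M^{\mp \vee}) = d_{\pm}(M)\left(\frac{l-1}{12}+\frac{1}{2}\right) - \alpha_1^{\pm}(M) - \alpha_2^{\pm}(M) - \alpha_3^{\pm}(M) + c,
\end{equation*}
where the three $\alpha_i^{\pm}$ terms are the elliptic (order-$2$ and order-$3$ fixed points at $S$ and $ST$) and cusp (the $T$-action, i.e.\ parabolic) contributions, computed as the sums of fractional parts $\alpha(\cdot)$ of the eigenvalues of the relevant twisted $\rho_M^{\pm}$-matrices, and $c$ collects the constant correction coming from the width-one cusp and the chosen normalization.

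The second step is to identify the boundary term: the "dual" space $S_{2-l}(\cdot)$ that appears on the left vanishes once $l>2$ (weight $2-l<0$ has no cusp forms), which is precisely where Proposition \ref{dim cusp form} asserts equality; for general $l$ one only gets the inequality $\dim M_l(\rho_M^{\pm}) \geq (\text{RHS})$, and then passing from $M_l$ to $S_l$ costs at most the dimension $h$ of the space of Eisenstein series, which for a cusp of width one contributes exactly the $\alpha_3$-type correction already bookkept, so after carefully matching the invariant-theory description of $W_{\pm}(M)$ with the cusp count one recovers the stated right-hand side $d_{\pm}(M) + d_{\pm}(M)\,l/12 - \sum_{i=1}^4 \alpha_i^{\pm}(M)$. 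The third step is the purely combinatorial verification that the constant $d_{\pm}(M)$ in front, together with $\alpha_4^{\pm}(M)$, is the correct parabolic/Eisenstein bookkeeping: $\alpha_4^+(M)$ counts the $T$-invariant basis vectors $e_\lambda + e_{-\lambda}$ (those with $(\lambda,\lambda)\in 2\Z$, i.e.\ $\rho_M(T) e_\lambda = e_\lambda$ since $\rho_M(T)$ acts by $e(-(\lambda,\lambda)/2)$), and $\alpha_4^-(M)$ the analogous count in $W_-(M)$ after removing the fixed vectors $e_\lambda$ with $2\lambda=0$; these are exactly the dimensions of the relevant Eisenstein subspaces one subtracts to go from $M_l$ to $S_l$, which explains why the $\alpha_4$ term does not appear with a fractional-part $\alpha(\cdot)$ but as an honest integer.

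I expect the main obstacle to be the precise reconciliation of normalizations: the metaplectic weight $l$ is a half-integer, the automorphy factor involves $\sqrt{\tau}$, and the central element $Z$ forces a parity constraint, so the eigenvalue phases entering $\alpha_1^{\pm}$ (the twist $e(l/4)$) and $\alpha_2^{\pm}$ (the twist $e(-l/6)$) must be pinned down with the correct branch of the square root and the correct sign of $\rho_M(Z)|_{W_\pm} = \pm\sqrt{-1}^{\,\rk(M)}$. Getting the $-1$ versus $+1$ right in these roots of unity — and checking that the resulting fractional parts sum, over all eigenvalues, to give integers on both sides of \eqref{eqn:dim formula Sl(rhoM)} when $l$ has the prescribed parity — is the delicate part; everything else (Riemann--Roch on the orbifold line, vanishing of negative-weight cusp forms for $l>2$) is standard. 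Since this is exactly the content of \cite{Sk}, \cite{E-S} and \cite{Bo}, the cleanest route is to quote their trace-formula computation verbatim and only verify that our conventions for $\rho_M$ (those of \cite{Bo}) match theirs, so that the stated inequality, and its sharpening to equality for $l>2$, follow directly.
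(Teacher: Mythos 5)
The paper gives no proof of this proposition — it is quoted directly from \cite{Sk}, \cite{E-S} and \cite{Bo} — and your sketch correctly reproduces the argument of those references: Riemann--Roch (equivalently the holomorphic Lefschetz/trace formula) on the orbifold modular curve for the finite-image representation $\rho_M^{\pm}$, with the elliptic contributions at $S$ and $ST$ giving $\alpha_1^{\pm}$ and $\alpha_2^{\pm}$, the parabolic contribution giving $\alpha_3^{\pm}$, the passage from $M_l$ to $S_l$ accounting for the integer $\alpha_4^{\pm}$ as the count of $T$-invariant vectors in $W_{\pm}(M)$, and the vanishing of the Serre-dual obstruction space $S_{2-l}$ for $l>2$ giving equality. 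This is the same approach as the cited sources, and your explicit flagging of the normalization issues (branch of $\sqrt{\tau}$, the $Z$-parity constraint, the exact constant term) is exactly where the care is needed; nothing essential is missing.
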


In \cite{Br} Bruinier estimated $\alpha_1^+(M)$ and $\alpha_2^+(M)$. 
(Note that $\rho_L$ in the convention of \cite{Br} is rather $\rho_L^{\vee}=\rho_{L(-1)}$ in that of \cite{Bo}.)  
A similar estimate is also possible for $\alpha_1^-(M)$ and $\alpha_2^-(M)$, 
and we can state the result in the following uniform manner.

\begin{lemma}[cf.~\cite{Br}]\label{Bruinier estimate}
Let $G_2, G_3\subset A_M$ denote the subgroups of elements $x$ with $2x=0$, $3x=0$ respectively. 
Then we have  
\begin{eqnarray*}
\alpha_1^{\pm}(M) &\leq & \frac{d_{\pm}(M)}{4} + \frac{1}{4}\sqrt{|G_2|}, \\
\alpha_2^{\pm}(M) &\leq & \frac{d_{\pm}(M)}{3} + \frac{1}{3\sqrt{3}}\left( 1+\sqrt{|G_3|}\right), 
\end{eqnarray*}
where $\pm$ is according to the parity of $l+{\rm rk}(M)/2$. 
\end{lemma}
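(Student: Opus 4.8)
The plan is to reduce the estimates for $\alpha_1^{\pm}(M)$ and $\alpha_2^{\pm}(M)$ to counting fixed points of the Weil representation operators $\rho_M(S)$ and $\rho_M(ST)$ on the $\pm$-eigenspaces $W_{\pm}(M)$, and then to apply classical Gauss-sum evaluations. Recall that $\alpha(X)=\sum_i\beta_i$ for a unitary $X$ with eigenvalues $e(\beta_i)$, $0\le\beta_i<1$; the crude but effective bound is $\alpha(X)\le\tfrac{1}{4}(d+|{\rm tr}(X)|\cdot c)$ type inequalities, obtained by comparing $\alpha(X)$ against the average value $d/2$ minus a correction governed by $|{\rm tr}(X)|$ (or, more precisely, by ${\rm tr}(X^j)$ for small $j$). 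The operators in question have finite order: $\rho_M^{\pm}(S)$ modified by the scalar $e(l/4)$ has order dividing $4$, and $\rho_M^{\pm}(ST)^{-1}$ modified by $e(-l/6)$ has order dividing $6$. So the first step is to record, for a unitary operator $Y$ with $Y^4={\rm id}$ (resp. $Y^6={\rm id}$), the elementary inequality bounding $\alpha(Y)$ linearly in $\dim$ and in $|{\rm tr}(Y)|$, $|{\rm tr}(Y^2)|$, $|{\rm tr}(Y^3)|$ as appropriate — this is exactly the mechanism in \cite{Br}.

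Second, I would compute the relevant traces. For $S$ one has the standard formula
\begin{equation*}
\rho_M(S)e_{\gamma} = \frac{e(-\sigma(M)/8)}{\sqrt{|A_M|}}\sum_{\delta\in A_M}e\big(-(\gamma,\delta)\big)e_{\delta},
\end{equation*}
so ${\rm tr}(\rho_M(S))$ is a normalized Gauss sum over $A_M$, and restricting to $W_{\pm}(M)$ only changes things by the contribution of the involution $\gamma\mapsto-\gamma$. The upshot is that $|{\rm tr}(\rho_M^{\pm}(S))|$ is controlled by $\sqrt{|G_2|}$, where $G_2=\{x:2x=0\}$ is the set of fixed points of $\gamma\mapsto-\gamma$ that survive; indeed $\sum_{\gamma}e(-(\gamma,\delta))$ is $|A_M|$ or $0$, and after the $S$-normalization and the symmetrization the surviving terms are indexed by $2$-torsion, giving the $\tfrac14\sqrt{|G_2|}$ error term. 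For $ST$ one uses that $\rho_M(T)e_{\gamma}=e((\gamma,\gamma)/2)e_{\gamma}$, so $\rho_M(ST)$ again has Gauss-sum-type traces; ${\rm tr}(\rho_M(ST)^{-1})$ and ${\rm tr}(\rho_M(ST)^{-2})={\rm tr}(\rho_M((ST)^2)^{-1})$ involve quadratic Gauss sums over $A_M$ and over the subgroup $G_3$ (since $(ST)^3$ is central), producing the $\tfrac{1}{3\sqrt3}(1+\sqrt{|G_3|})$ bound. The extra $1$ accounts for the constant eigenvalue contribution of the central element $(ST)^3=Z$ acting as a fixed scalar on each $W_{\pm}$.

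Third, I would assemble: plug the trace bounds into the order-$4$ and order-$6$ elementary inequalities from step one, and simplify to the stated clean form $\alpha_1^{\pm}(M)\le \tfrac14 d_{\pm}(M)+\tfrac14\sqrt{|G_2|}$ and $\alpha_2^{\pm}(M)\le\tfrac13 d_{\pm}(M)+\tfrac{1}{3\sqrt3}(1+\sqrt{|G_3|})$. For the $+$ case this is verbatim \cite{Br}; the only genuinely new content is the $-$ case, so the main obstacle is bookkeeping the sign-eigenspace decomposition consistently — in particular, getting the parity shift $e(l/4)$ versus $e(-l/6)$ and the restriction to $W_{-}(M)$ right in the Gauss sums, and confirming that the symmetrization $\gamma\mapsto-\gamma$ contributes the same square-root-of-torsion bound (rather than something larger) on the antisymmetric part. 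I expect that once the trace of $\rho_M^{-}(S)$ and $\rho_M^{-}(ST)$ are written as (symmetrized) Gauss sums, Milgram's formula and the standard estimate $|\text{Gauss sum over }H|\le\sqrt{|H|}$ close everything; there is no deep input beyond \cite{Br} and the elementary eigenvalue-counting lemma.
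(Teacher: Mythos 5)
Your proposal takes essentially the same route as the paper: cite Bruinier for the $W_+$ case, and for $W_-$ compute $\alpha_1^-,\alpha_2^-$ exactly via traces of the finite-order operators restricted to $W_-$ (these traces are the Gauss sums $G(2,M)$, $G(1,M)$, $G(-3,M)$), then bound those Gauss sums by $\sqrt{|A_M|\,|G_d|}$ using Bruinier's Lemma 1 and Milgram's formula. The only blemish is your attribution of the summand $1$ in the $\alpha_2$ bound to the central element $(ST)^3=Z$; it in fact arises from $|G(1,M)|=\sqrt{|A_M|}$ via Milgram's formula, which you invoke anyway, so the argument closes.
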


\begin{proof}
See \cite{Br} Corollary 3 for the $W_+(M)$-valued case. 
In the $W_-(M)$-valued case, 
we can still follow the argument of \cite{Br} Lemma 2 to deduce 
\begin{eqnarray*}
\alpha_1^-(M) & = & \frac{d_-(M)}{4} + 
\frac{e(x_1)}{4\sqrt{|A_M|}}{\rm Im}(G(2, M)), \\
\alpha_2^-(M) & = & \frac{d_-(M)}{3} + 
\frac{1}{3\sqrt{3|A_M|}}{\rm Re}[ e(x_2) (G(1, M)-G(-3, M)) ], 
\end{eqnarray*}
where 
$x_1=(2l+{\rm rk}(M)+2)/8$, 
$x_2=(-4l-3{\rm rk}(M)+10)/24$, 
and $G(d, M)$ is the Gauss sum 
$\sum_{\lambda\in A_M}e(d(\lambda, \lambda)/2)$.  
Then we can use \cite{Br} Lemma 1 to estimate the Gauss sums.   
\end{proof}

Bruinier also estimated $\alpha_3^+(M)$, but we will not use that.

\subsection{Proof of Theorem \ref{effective}}\label{ssec:prf eff}

We are now ready to prove Theorem \ref{effective}. 
Let $L$ be a quasi-cyclic lattice of signature $(2, n)$ with $n\geq8$. 
By Corollary \ref{q-cyclic contain 2U} the lattice $L$ contains $2U$: we write it in the form 
\begin{equation*}
L = 2U \oplus M 
\end{equation*}
with $M$ quasi-cyclic and negative-definite of rank $n-2$. 
We shall apply Proposition \ref{dim cusp form} to this $M$.  
By Corollaries \ref{q-cyclic length p>2} and \ref{q-cyclic length p=2} we have  
\begin{equation}\label{eqn:estimate G2 G3}
|G_2| \leq 2^5, \qquad |G_3|\leq 3^4. 
\end{equation} 
Incorporating this with Lemma \ref{Bruinier estimate} gives estimates of $\alpha_1^{\pm}(M)$ and $\alpha_2^{\pm}(M)$. 
For the $\alpha_3$ and $\alpha_4$-terms, we content ourselves with the trivial inequality 
\begin{equation}\label{eqn:trivial bound alpha3+4}
\alpha_3^{\pm}(M) + \alpha_4^{\pm}(M) \leq d_{\pm}(M). 
\end{equation}
If we put 
$\varepsilon = \sqrt{2} + 10/\sqrt{27}$, 
we obtain in this way  
\begin{equation*}
{\dim}\: S_l(\rho_M) \geq d_{\pm}(M) \cdot \frac{l-7}{12} - \varepsilon, 
\end{equation*}
where $l$ is a half integer with $l+n/2-1\in{\Z}$ and $\pm$ is according to the parity of $l+n/2-1$. 
When the right hand side is positive, the Jacobi lifting produces a cusp form for ${\Ost}(L)$ because $M$ is quasi-cyclic. 
Therefore 

\begin{proposition}\label{estimate for cusp form exist}
Let $L$ be a quasi-cyclic lattice of signature $(2, n)$ with $n\geq8$. 
Let $l$ be a half integer with $l+n/2\in{\Z}$. 
If 
\begin{equation*}
d_{\pm}(L)\cdot (l-7) > 12\varepsilon, 
\end{equation*} 
where $\pm$ is according to the parity of $l+n/2-1$, 
then there exists a cusp form of weight $l+n/2-1$ with respect to ${\Ost}(L)$. 
\end{proposition}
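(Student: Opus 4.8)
The plan is to repackage the dimension estimate assembled in the discussion just above and then feed a nonzero vector-valued cusp form through the Jacobi lifting. Since $L$ is quasi-cyclic of signature $(2,n)$ with $n\geq8$, Corollary \ref{q-cyclic contain 2U} lets us write $L=2U\oplus M$ with $M$ quasi-cyclic, negative-definite of rank $n-2$. Because $2U$ is unimodular the discriminant forms $A_L$ and $A_M$ are canonically isometric; in particular $d_{\pm}(L)=d_{\pm}(M)$, and the sign $\pm$ attached to $l+n/2-1$ picks out the same summand $W_{\pm}$ for $L$ and for $M$.

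Next I would apply Proposition \ref{dim cusp form} to $M$ at weight $l$: here the relevant integer is $l+{\rm rk}(M)/2=l+n/2-1$, so the $\pm$ in \eqref{eqn:dim formula Sl(rhoM)} is exactly the one in the statement. The terms $\alpha_1^{\pm}(M)$ and $\alpha_2^{\pm}(M)$ are controlled by Lemma \ref{Bruinier estimate} together with the bounds $|G_2|\leq2^5$, $|G_3|\leq3^4$ from Corollaries \ref{q-cyclic length p>2} and \ref{q-cyclic length p=2} (which is where quasi-cyclicity enters the estimate quantitatively), while $\alpha_3^{\pm}(M)+\alpha_4^{\pm}(M)$ is absorbed into the trivial bound \eqref{eqn:trivial bound alpha3+4}. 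Collecting these --- the single computational step, already carried out above --- gives
\begin{equation*}
\dim S_l(\rho_M)\;\geq\;d_{\pm}(M)\cdot\frac{l-7}{12}-\varepsilon .
\end{equation*}
Hence the hypothesis $d_{\pm}(L)(l-7)=d_{\pm}(M)(l-7)>12\varepsilon$ makes the right-hand side positive, so $S_l(\rho_M)\neq0$; note it also forces $l>7$, since otherwise $d_{\pm}(L)(l-7)\leq0<12\varepsilon$.

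Finally I would transport a nonzero $\Phi\in S_l(\rho_M)$ back to an orthogonal modular form. By Proposition \ref{Jacobi form = rhoM-type form} it corresponds to a nonzero Jacobi cusp form $\phi\in J_{k,1}(M(-1))$ with $k-{\rm rk}(M)/2=l$, i.e.\ $k=l+n/2-1$; since $l>7$ and $n\geq8$ we have $k>10\geq4$, so Theorem \ref{Jacobi lifting} applies. Because $M$ is quasi-cyclic that theorem sends Jacobi cusp forms to cusp forms and is injective, so the image is a nonzero cusp form of weight $k=l+n/2-1$ with respect to $\Ost(2U\oplus M)=\Ost(L)$, which is exactly the claim. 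I expect no genuine difficulty here; the only points to watch are the weight bookkeeping (the vector-valued weight is $l=k-{\rm rk}(M)/2$, not $k$, and $d_{\pm}$ is unchanged by the unimodular summand $2U$) and checking that the hypothesis is strong enough to guarantee the bound $k\geq4$ needed to invoke the lifting. The substantive input --- Bruinier's Gauss-sum estimates and the bounds on $|G_2|$, $|G_3|$ for quasi-cyclic $M$ --- has already been isolated in the cited lemmas and corollaries, so nothing remains beyond assembling these.
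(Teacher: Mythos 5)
Your proposal is correct and follows essentially the same route as the paper: decompose $L=2U\oplus M$ via Corollary \ref{q-cyclic contain 2U}, combine Proposition \ref{dim cusp form}, Lemma \ref{Bruinier estimate}, the bounds \eqref{eqn:estimate G2 G3} and \eqref{eqn:trivial bound alpha3+4} to get $\dim S_l(\rho_M)\geq d_{\pm}(M)(l-7)/12-\varepsilon$, and then lift via Proposition \ref{Jacobi form = rhoM-type form} and Theorem \ref{Jacobi lifting}. The only additions are bookkeeping the paper leaves implicit (that $d_{\pm}(L)=d_{\pm}(M)$ and that the hypothesis forces $l>7$, hence $k=l+n/2-1\geq4$ so the lifting applies), both of which are correct.
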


Theorem \ref{effective} follows from this proposition and the following lemma. 

\begin{lemma}\label{auxiliary for finiteness}
There are only finitely many quasi-cyclic lattices $L$ of signature $(2, n)$ with $n\geq15$ 
which does not admit half integer $l\equiv n/2$ mod ${\Z}$ satisfying both 
\begin{equation*}
d_{\pm}(L)\cdot(l-7) > 12\varepsilon, \qquad l< n/2+1. 
\end{equation*}
\end{lemma}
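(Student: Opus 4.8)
The plan is to show that the failure condition in Lemma \ref{auxiliary for finiteness} forces $d_{\pm}(L)$ to stay bounded, and then to invoke the classification of discriminant forms (finiteness of even lattices of fixed signature and bounded discriminant). First I would dispose of the size of the admissible weights: since $n\geq15$, the interval of half integers $l\equiv n/2\bmod{\Z}$ lying in $(7,\,n/2+1)$ is non-empty once $n/2+1>7$, i.e.\ $n>12$, and as $n$ grows this interval contains more and more values $l$ of each parity (because consecutive half integers with $l\equiv n/2\bmod\Z$ differ by $1$, and both parities of $l+n/2-1$ occur among them as soon as the interval has length $\geq2$, i.e.\ $n\geq14$). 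So for every $n\geq15$ we may pick, for each sign $\pm$, a value $l_{\pm}$ with $l_{\pm}<n/2+1$, $l_{\pm}\equiv n/2\bmod\Z$, and $l_{\pm}-7\geq c_n$ for an explicit $c_n\to\infty$; concretely one can take $l_{\pm}-7$ as large as roughly $n/2-6$.

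The key step is then the contrapositive: if $L$ does \emph{not} admit such an $l$, then for the sign $\pm$ in question we must have $d_{\pm}(L)\cdot(l_{\pm}-7)\leq12\varepsilon$ for our chosen $l_{\pm}$, hence $d_{\pm}(L)\leq12\varepsilon/(l_{\pm}-7)\leq12\varepsilon/c_n$. For $n$ large this already gives $d_{\pm}(L)<1$, i.e.\ the relevant eigenspace $W_{\pm}(M)$ vanishes; but $d_+(M)\geq1$ always (the vector $e_0$ lies in $W_+(M)$), so the $+$-case is impossible for large $n$, and in the $-$-case $d_-(M)=0$ forces $A_M$ to be $2$-torsion of very small order. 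In either situation $|A_M|$, and hence $|A_L|$, is bounded in terms of $n$. Running the estimate for the finitely many remaining small $n$ (from $15$ up to wherever $12\varepsilon/c_n$ drops below $1$) likewise bounds $d_{\pm}(L)$, and since $d_{\pm}(L)$ controls $|A_L|$ (indeed $d_+(M)+d_-(M)$ is roughly $|A_M|/2$), the discriminant group of any offending $L$ has bounded order. Because there are only finitely many abelian groups of bounded order and length $\leq n+2$, and only finitely many even lattices of given signature and discriminant form, the set of such $L$ is finite.

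The main obstacle I anticipate is the bookkeeping in the first step: one must be careful that a half integer $l$ of the \emph{correct} parity of $l+n/2-1$ and large enough $l-7$ actually exists inside $(7,n/2+1)$, since the parity constraint halves the available candidates. Writing $l=m+\delta$ with $\delta\in\{0,\tfrac12\}$ determined by $n\bmod2$, the condition $l+n/2-1$ even (resp.\ odd) is a congruence on the integer part $m$ modulo $2$, so consecutive usable $l$ differ by $2$; one needs the interval $(7,n/2+1)$ to have length $>2$, i.e.\ $n\geq15$, which is exactly the hypothesis. Thus the lemma's numerical threshold $n\geq15$ is precisely what makes the parity-restricted choice of $l$ possible, and once that is checked the rest is the finiteness-of-discriminant-forms argument already used in \S\ref{ssec:reduction}.
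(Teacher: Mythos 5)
Your overall strategy coincides with the paper's: negate the condition to bound $d_{\pm}(L)$, convert that into a bound on $|A_L|$, invoke finiteness of (quasi-cyclic) lattices of fixed signature and bounded discriminant, and use $d_+(M)\geq1$ to kill all large $n$. However, two steps as written do not go through. First, the interval bookkeeping is wrong: the valid weights form the coset $\tfrac n2+\Z$, so the open interval $(7,\,n/2+1)$ contains a weight of \emph{each} parity of $l+n/2-1$ only when it contains two such points, which requires $n\geq17$, not $n\geq14$ (and ``length $>2$'' is $n\geq17$, not $n\geq15$). For $n=15$ the only admissible weight is $l=15/2$ (forcing the $+$ sign), and for $n=16$ it is $l=8$ (forcing the $-$ sign); your plan of choosing an $l_{\pm}$ for each sign and bounding both $d_+$ and $d_-$ is therefore impossible in exactly the borderline cases the lemma is designed to cover. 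The paper instead fixes the single weight $15/2$ or $8$ at each $n$ and accepts that only one of $d_+,d_-$ gets bounded.

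This feeds into the second, more substantive gap: once only $d_-(L)$ is bounded, you cannot conclude that $|A_L|$ is bounded without further input, since $d_-(M)=\tfrac12\bigl(|A_M|-|G_2|\bigr)$ (note also $d_++d_-=|A_M|$ exactly, not ``roughly $|A_M|/2$''); e.g.\ $(\Z/2)^k$ has $d_-=0$ with $|A_M|$ arbitrary. The missing ingredient is precisely the quasi-cyclic hypothesis, via $|G_2|\leq2^5$ (inequality \eqref{eqn:estimate G2 G3}, from Corollary \ref{q-cyclic length p=2}), which yields $|A_L|\leq2d_{\pm}(L)+2^5$ and hence finiteness of the possible discriminant forms. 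You gesture at this only in the degenerate case $d_-=0$ (``$2$-torsion of very small order'') but it is needed for every fixed $n$ for which the forced parity is $-$. Both defects are repairable from results already in the paper, but as stated the proposal does not close the argument for $n=15,16$ nor justify the passage from bounded $d_-$ to bounded $|A_L|$.
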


\begin{proof}
We first prove the finiteness at each \textit{fixed} $n$. 
Take $l=15/2$ when $n$ is odd and $l=8$ when even. 
This satisfies $l<n/2+1$ and $l>7$. 
It then suffices to see that the inequality 
$d_{\pm}(L) \leq 12\varepsilon/(l-7)$ 
holds for only finitely many $L$, where $\pm$ is determined by $[n]\in{\Z}/4{\Z}$. 
Since 
$|A_L|\leq2d_{\pm}(L)+2^5$ 
holds by \eqref{eqn:estimate G2 G3}, 
there are only finitely many quasi-cyclic discriminant forms with $d_{\pm}$ bounded. 
With the signature $(2, n)$ fixed, a quasi-cyclic lattice is determined by its discriminant form 
according to \cite{Ni} and Corollary \ref{q-cyclic contain 2U}. 

Next, if $n$ is sufficiently large, we can choose $l$ so that 
\begin{equation*}
l+n/2-1 \in 2{\Z}, \qquad 
l< n/2+1, \qquad 
l>12\varepsilon+7. 
\end{equation*}
(For instance, when $n\geq97$, we may take $l=49-n_0/2$ where $n_0\equiv n$ mod $4$ with $0\leq n_0\leq3$.)  
In this case $+$ is chosen, and  
\begin{equation*}
d_+(L) \geq 1 > 12\varepsilon/(l-7) 
\end{equation*}
holds for any $L$. 
\end{proof}


\begin{remark}\label{effective calcu cusp form}
In the proof of Lemma \ref{auxiliary for finiteness} we gave a bound $n\geq97$ 
where the assertion of Theorem \ref{effective} holds with no exception, but this is too coarse. 
We can improve the estimate of $l$ by separating the small $d_{\pm}(L)$ case 
and calculating \eqref{eqn:dim formula Sl(rhoM)} directly there. 
At least for maximal $L$ with $|A_L|>2$, 
we can thus see that there always exists a cusp form of type $\rho_L$ and weight $\leq8$. 
Ideally, it is desirable to improve the poor estimate \eqref{eqn:trivial bound alpha3+4}. 
\end{remark}


\section{The reflective obstruction}\label{sec:bigness setup}

In this section we shall carry out the proof of Theorem \ref{bigness}, 
with the proof of two crucial inequalities postponed to later sections. 
In \S \ref{ssec:branch div} we recall the classification of the branch divisors following \cite{G-H-S1}. 
In \S \ref{ssec:HM vol} we show that the bigness of $a\mathcal{L}-B/2$ can be derived from 
an inequality involving Hirzebruch-Mumford volumes. 
Then in \S \ref{ssec:proof bigness} we state (without proof) the key estimates, 
and deduce Theorem \ref{bigness} from them. 
By thus separating the actual calculations, 
we hope to clarify the structure of the proof of Theorem \ref{bigness}.

\subsection{The branch divisors}\label{ssec:branch div}

The basis of our argument is Gritsenko-Hulek-Sankaran's classification \cite{G-H-S1} of 
irreducible components of the branch divisor of ${\FL}$. 
Let $L$ be an even lattice of signature $(2, n)$ with $n>2$. 
Recall that the reflection $\sigma_l$ with respect to a primitive vector $l\in L$ of norm $<0$ 
is defined by the equation 
\begin{equation*}
\sigma_l : L\otimes{\Q} \to L\otimes{\Q}, \quad v\mapsto v-\frac{2(v, l)}{(l, l)} l. 
\end{equation*}
When $\sigma_l \in {\Or}(L)$ and $\sigma_l \equiv \pm{\rm id}$ on $A_L$, 
then $l$ is called a \textit{stably reflective vector}. 
According to \cite{G-H-S1} Corollary 2.13, 
irreducible components of the ramification divisor of ${\DL}\to{\FL}$ are 
exactly the fixed divisors of $\sigma_l$ of stably reflective vectors $l$. 
If $K=l^{\perp}\cap L$, this fixed divisor is nothing but the hyperplane section 
\begin{equation*}
\mathcal{D}_{K} = {\proj}(K\otimes {\C})\cap{\DL}. 
\end{equation*}
Thus the classification of irreducible components of the branch divisor of ${\FL}$ is equivalent to 
that of stably reflective vectors up to $\langle {\Ost}(L), -1\rangle$. 

In \cite{G-H-S1} \S 3, stably reflective vectors are initially classified as follows. 
We denote ${\divi}(l)=[{\Q}l\cap L^{\vee}:{\Z}l]$, which is the natural number generating $(l, L)$. 
Note that $\sigma_l$ preserves $L$ exactly when $(l, l)=-{\divi}(l)$ or $-2{\divi}(l)$.

\begin{proposition}[\cite{G-H-S1}]\label{classify stab ref vect} 
Let $l\in L$ be a stably reflective vector. 

\noindent
(A) If  $\sigma_l \equiv {\rm id}$ on $A_L$, then either 

(AI) $(l, l)=-2$ and ${\divi}(l)=2$;  

(AII) $(l, l)=-2$ and ${\divi}(l)=1$. 

\noindent
(B) Suppose that $\sigma_l \equiv -{\rm id}$ but $\nequiv {\rm id}$ on $A_L$. 
Then $A_L$ must be of the form $A_L\simeq {\Z}/D\oplus({\Z}/2)^m$ for some $D>2$ and $m\geq0$, 
with either  

(BI) $(l, l)=-D$ and ${\divi}(l)=D$;  

(BII) $(l, l)=-D$ and ${\divi}(l)=D/2$; 

(BIII) $(l, l)=-2D$, ${\divi}(l)=D$ with $D$ odd and $m=0$. 
\end{proposition}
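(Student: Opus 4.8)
The statement to be proved is the classification in Proposition \ref{classify stab ref vect} of stably reflective vectors $l\in L$, according to how $\sigma_l$ acts on $A_L$. The plan is to start from the two constraints that make $l$ stably reflective: first, $\sigma_l\in{\rm O}(L)$, which as noted forces $(l,l)=-\divi(l)$ or $(l,l)=-2\divi(l)$; and second, $\sigma_l\equiv\pm\mathrm{id}$ on $A_L$. The key computation is to work out the action of $\sigma_l$ on $A_L=L^{\vee}/L$ explicitly. Writing $l^{*}=l/\divi(l)\in L^{\vee}$, one has for $v\in L^{\vee}$ that $\sigma_l(v)\equiv v-2(v,l^{*})\,l^{*}\cdot\frac{\divi(l)^2}{(l,l)}$ modulo $L$ (after clearing denominators carefully), so the nontrivial part of the action is concentrated on the cyclic subgroup of $A_L$ generated by the image $\bar l^{*}$ of $l^{*}$, and is governed by the single quantity $2\divi(l)^2/(l,l)=\mp 2\divi(l)$, i.e.\ $-2$ in case $(l,l)=-\divi(l)$ and $-1$ in case $(l,l)=-2\divi(l)$.

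From this the two cases (A) and (B) separate cleanly. In case (A), $\sigma_l\equiv\mathrm{id}$ on all of $A_L$, which forces $2(v,l^{*})l^{*}\cdot\divi(l)^2/(l,l)\in L$ for every $v$; testing $v=l^{*}$ this says $2l^{*}\cdot\divi(l)^2/(l,l)\cdot (l^{*},l^{*})\divi(l)^{-1}$-type integrality, and pinning down $(l,l)$ and $\divi(l)$: one shows $(l,l)=-2$, and then $\divi(l)\mid 2$, giving exactly (AI) $\divi(l)=2$ and (AII) $\divi(l)=1$. In case (B), $\sigma_l\equiv-\mathrm{id}$ but $\not\equiv\mathrm{id}$, so $A_L$ is annihilated by (essentially) the order of $\bar l^{*}$, and $\bar l^{*}$ must generate a cyclic direct summand whose complement is killed by $2$; this is where the shape $A_L\simeq{\Z}/D\oplus({\Z}/2)^m$ with $D>2$ comes from, and where the requirement that $-\mathrm{id}$ act nontrivially forces $D>2$. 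Matching the scalar $\mp2\divi(l)$ against $-1\bmod D$ on the ${\Z}/D$-summand then yields the three arithmetic sub-cases: $(l,l)=-D$ with $\divi(l)=D$ (BI) or $\divi(l)=D/2$ (BII) from the $(l,l)=-\divi(l)$ branch, and $(l,l)=-2D$ with $\divi(l)=D$ from the $(l,l)=-2\divi(l)$ branch, the last forcing $D$ odd (so that $2\divi(l)\equiv -1$ is solvable appropriately) and $m=0$ (so that no ${\Z}/2$-summand survives $-\mathrm{id}$ being distinct from $\mathrm{id}$ compatibly).

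The main obstacle I expect is the careful bookkeeping in case (B): one must simultaneously control the order of $\bar l^{*}$ in $A_L$, the structure of the orthogonal complement of $\langle\bar l^{*}\rangle$ inside the discriminant form, and the 2-adic versus odd-part contributions, since $\sigma_l\equiv-\mathrm{id}$ imposes that every element of $A_L$ outside $\langle\bar l^{*}\rangle$ has order dividing $2$. Untangling why $D$ must be odd in sub-case (BIII) and why that forces $m=0$ requires looking at the quadratic form on $A_L$, not just the group structure — specifically at how the norm $(l,l)/\divi(l)^2\in{\Q}/2{\Z}$ sits in the discriminant form. Since this is precisely \cite{G-H-S1}~\S 3, I would cite that reference for the detailed lattice-theoretic verification and reproduce only the scalar computation above to make the classification transparent. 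Everything else — the identification of branch components with the $\mathcal{D}_K$, and the fact that only $\divi(l)\mid\gcd$-type conditions survive — is routine once the action of $\sigma_l$ on $A_L$ is written down.
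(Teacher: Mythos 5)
Your proposal is essentially the paper's approach: the paper does not reprove this classification but imports it from \cite{G-H-S1} \S 3, exactly as you propose to do, and your sketch of the action $\sigma_l(v)=v-2(v,l^*)l^*\cdot\divi(l)^2/(l,l)$ on $A_L$ is the computation underlying that reference. The only content the paper adds beyond the citation concerns the extra conditions in case (B): $D>2$ comes, as you say, from excluding $\sigma_l\equiv\mathrm{id}$, while $m=0$ in (BIII) follows simply because $D$ is defined in \cite{G-H-S1} as the exponent of $A_L$, so $D$ odd and $m\geq 1$ would force exponent $2D$ — a cleaner reason than your ``no $\Z/2$-summand survives'' gesture (and note your intermediate ``$\mp 2\divi(l)$'' should read $-2\divi(l)$ or $-\divi(l)$ in the two branches).
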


The condition $m=0$ in case (BIII) is not stated in \cite{G-H-S1}, 
but this follows at once because $D$ was originally defined in \cite{G-H-S1} as the exponent of $A_L$. 
We required $\sigma_l\nequiv{\rm id}$ in case (B) in order to avoid overlap with case (A): 
this forces $D>2$. 
Notice that $D$ must be even in cases (BI) and (BII).



\subsection{Hirzebruch-Mumford volume}\label{ssec:HM vol}

For a while let $L_0$ be an arbitrary integral lattice of signature $(2, n_0)$ with $n_0>0$ 
and $\Gamma\subset{\Or}(L_0)$ be a finite-index subgroup. 
We write $M_k(\Gamma)$ for the space of modular forms of weight $k$ with respect to $\Gamma$. 
Gritsenko-Hulek-Sankaran \cite{G-H-S2} defined 
the Hirzebruch-Mumford volume ${\HM}(\Gamma)$ of $\Gamma$ 
following the proportionality principle of Hirzebruch and Mumford. 
It determines the growth behavior of the dimension of $M_k(\Gamma)$ by 
\begin{equation}\label{eqn:HM volume and modular forms}
{\rm dim}\, M_k(\Gamma) = \frac{2}{n_0!}{\HM}(\Gamma) k^{n_0} + O(k^{n_0-1}), 
\end{equation}
where we restrict to even $k$ if $-1\in\Gamma$. 
We may adopt this as an equivalent definition of ${\HM}(\Gamma)$. 
If $\Gamma'\subset\Gamma$ is a cofinite subgroup, we have 
\begin{equation}\label{eqn:HM cofinite subgrp}
{\HM}(\Gamma') = [\langle \Gamma, -1\rangle : \langle \Gamma', -1\rangle]\cdot {\HM}(\Gamma). 
\end{equation}

Now let $L$ be an even lattice of signature $(2, n)$ 
for which we are studying whether the ${\Q}$-divisor $a\mathcal{L}-B/2$ of ${\FL}$ is big where $a\in{\Q}$. 
This problem can be related to the Hirzebruch-Mumford volumes of ${\Ost}(L)$ and of the branch divisors. 
Choose representatives $l_1,\cdots, l_r\in L$ of 
the set of $\langle {\Ost}(L), -1\rangle$-equivalence classes of stably reflective vectors. 
Let $K_i=l_i^{\perp}\cap L$, and 
$\Gamma_i\subset{\Or}(K_i)$ be the image of the stabilizer of ${\Z}l_i$ in ${\Ost}(L)$. 
Then $\Gamma_i\backslash\mathcal{D}_{K_i}$ is the normalization of the component of $B$ defined by $l_i$.

\begin{proposition}\label{big via HM volume}
For $a\in {\Q}_{>0}$ the ${\Q}$-divisor $a\mathcal{L}-B/2$ of ${\FL}$  is big if we have 
\begin{equation}\label{eqn: big via HM volume}
\sum_{i=1}^{r} \frac{{\HM}(\Gamma_i)}{{\HM}({\Ost}(L))}
< \left( 1+\frac{1}{a} \right)^{1-n} \cdot \frac{2a}{n}. 
\end{equation}
\end{proposition}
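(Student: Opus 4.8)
The plan is to compare the asymptotics of the space of sections of $k(a\mathcal{L}-B/2)$ with those of $k\cdot a\mathcal{L}$ minus a correction coming from the branch divisors, using \eqref{eqn:HM volume and modular forms} as a volume count. First I would clear denominators: pick a positive integer $m$ so that $ma\in\Z$, and for each large $k$ divisible by $m$ consider the restriction map from $M_{kan}(\widetilde{\mathrm{O}}^+(L))$ — i.e. sections of $kan\mathcal{L}$ on $\overline{\mathcal{F}_L}$ — to the product $\bigoplus_i M_{kan}(\Gamma_i)$ of modular forms on the branch divisors $\Gamma_i\backslash\mathcal{D}_{K_i}$. A modular form vanishing to order $k/2$ along every component of $B$ descends to a section of $k(a\mathcal{L}-B/2)$; since each $K_i$ has codimension one, vanishing to order $k/2$ along the $i$-th component imposes at most $\binom{k/2+ \text{(something)}}{\cdots}$ — more precisely, of the order of $(k/2)\cdot\dim M_{kan}(\Gamma_i)$ conditions up to lower order (the length of the $(k/2)$-jet normal to the divisor times the dimension on the divisor), hence at most $\tfrac{k}{2}\dim M_{kan}(\Gamma_i)(1+o(1))$ conditions as $k\to\infty$. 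Therefore
\begin{equation*}
\dim H^0\bigl(\overline{\mathcal{F}_L}, k(a\mathcal{L}-B/2)\bigr) \;\geq\; \dim M_{kan}(\widetilde{\mathrm{O}}^+(L)) \;-\; \sum_{i=1}^r \frac{k}{2}\dim M_{kan}(\Gamma_i)\;(1+o(1)).
\end{equation*}

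Next I would feed \eqref{eqn:HM volume and modular forms} into both sides. The leading term of $\dim M_{kan}(\widetilde{\mathrm{O}}^+(L))$ is $\tfrac{2}{n!}\HM(\widetilde{\mathrm{O}}^+(L))(kan)^n$, while each $\Gamma_i\subset\mathrm{O}^+(K_i)$ acts on an $(n-1)$-dimensional domain, so $\dim M_{kan}(\Gamma_i)$ has leading term $\tfrac{2}{(n-1)!}\HM(\Gamma_i)(kan)^{n-1}$; multiplying by $k/2$ produces a term of the same order $k^n$. Collecting the coefficient of $k^n$, the displayed lower bound is positive for all large $k$ (hence $a\mathcal{L}-B/2$ is big) provided
\begin{equation*}
\frac{2}{n!}\HM(\widetilde{\mathrm{O}}^+(L))\,(an)^n \;-\; \sum_{i=1}^r \frac{1}{2}\cdot\frac{2}{(n-1)!}\HM(\Gamma_i)\,(an)^{n-1}\cdot an \;>\;0,
\end{equation*}
which after dividing by $\tfrac{2}{(n-1)!}\HM(\widetilde{\mathrm{O}}^+(L))(an)^{n-1}$ becomes $an - \tfrac{n}{2}\sum_i \HM(\Gamma_i)/\HM(\widetilde{\mathrm{O}}^+(L)) > 0$. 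This is cleaner than \eqref{eqn: big via HM volume} but weaker; to get the stated inequality I would instead not throw away the boundary, and estimate more carefully how many conditions the vanishing along $B$ really imposes — twisting $\mathcal{L}$ by $-B/2$ on the toroidal compactification, the count of $(k/2)$-jets transverse to $B$ should be done against the dimension of sections of $k a\mathcal{L}$ restricted to (a tubular neighborhood of) $B$, and a generating-function/binomial bookkeeping of the jets replaces the crude factor $k/2$ by the exact Hilbert-polynomial contribution, which is where the factor $(1+1/a)^{1-n}\cdot 2a/n$ comes from: it is exactly the ratio of $\int (a\mathcal{L})^n$ to the "boundary defect" in the expansion of $\int(a\mathcal{L}-B/2)^n$ once one writes the self-intersection numbers via HM volumes and uses that $B = \sum_i 2(\text{class of }\mathcal{D}_{K_i})$ with each $\mathcal{D}_{K_i}\sim \mathcal{L}|_{\mathcal{D}_{K_i}}$-compatible intersection numbers controlled by $\HM(\Gamma_i)$.

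Concretely, the cleanest route to the precise constant is a Riemann–Roch / intersection-theoretic one: on a resolution $\widetilde{X}\to\overline{\mathcal{F}_L}$ the $\Q$-divisor $D_a := a\mathcal{L}-B/2$ has $D_a^n = \sum_{j=0}^n \binom{n}{j} a^{n-j}(\mathcal{L})^{n-j}\cdot(-B/2)^j$; using $(\mathcal{L})^{n-j}\cdot(B/2)^j = \sum_i (\mathcal{L}|_{\mathcal{D}_{K_i}})^{n-1}\cdot(\text{normal factors})$ and the volume identities $(\mathcal{L})^n = \tfrac{n!}{2}\HM(\widetilde{\mathrm{O}}^+(L))\cdot(\text{const})$, $(\mathcal{L}|_{\mathcal{D}_{K_i}})^{n-1} = \tfrac{(n-1)!}{2}\HM(\Gamma_i)\cdot(\text{same const})$, one finds $D_a^n > 0$ is equivalent — after summing the geometric-type series in $j$, which produces the $(1+1/a)^{1-n}$ — to \eqref{eqn: big via HM volume}; then bigness follows since $D_a$ is the difference of the nef-and-big class $a\mathcal{L}$ (pulled back from Baily–Borel, so nef, and big because $n\mathcal{L}$ is) and an effective class, and a nef class with positive top self-intersection that dominates an effective divisor is big (or: apply the standard criterion that $\mathrm{vol}(D_a)\geq D_a^n$ when $D_a = N - E$ with $N$ nef and $E\geq 0$).

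The main obstacle is the bookkeeping that upgrades the crude "$k/2$ conditions per component" bound to the exact constant $(1+1/a)^{1-n}\cdot 2a/n$: one must track the full jet scheme of $B$ against $a\mathcal{L}$, i.e. compute the leading coefficient of $\dim H^0(k(a\mathcal{L}-B/2))$ rather than merely bound it, and this requires knowing that the restriction maps $M_{kan}(\widetilde{\mathrm{O}}^+(L))\to \bigoplus_i (\text{jets along }\mathcal{D}_{K_i})$ are asymptotically surjective onto the jet spaces — equivalently that $a\mathcal{L}$ is "positive enough" near $B$. I would handle this by working on the toroidal model $\overline{\mathcal{F}_L}$ (whose existence with canonical singularities and no boundary branching is quoted from \cite{G-H-S1}), where $\mathcal{L}$ is big and the $\mathcal{D}_{K_i}$ meet the interior, and by invoking asymptotic Riemann–Roch for the big $\Q$-divisor $a\mathcal{L}$ together with the exact-sequence filtration $0\to \mathcal{O}(k a\mathcal{L} - \tfrac{k}{2}B)\to \mathcal{O}(ka\mathcal{L})\to \mathcal{O}(ka\mathcal{L})|_{\tfrac{k}{2}B}\to 0$, computing the Euler characteristic of the quotient by the intersection numbers above. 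The rest is the routine algebra of summing $\sum_j\binom{n}{j}(-1/(2a))^j$-type series and substituting the HM-volume identities.
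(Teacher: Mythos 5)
Your overall strategy --- compare the leading $k^{n}$-coefficient of a lower bound for $h^{0}(ka\mathcal{L}-(k/2)B)$ obtained from ${\dim}\,M_{ka}({\Ost}(L))$ minus the number of conditions imposed by vanishing along $B$, all expressed through \eqref{eqn:HM volume and modular forms} --- is exactly the paper's. But the one computation you actually carry out has a genuine error, and it is precisely the point where the constant $(1+1/a)^{1-n}$ is born. You count the conditions imposed by vanishing to order $k/2$ along the $i$-th component as $(k/2)\cdot\dim M_{\bullet}(\Gamma_i)$ \emph{at a fixed weight}. This is wrong: the $j$-th normal jet of a weight-$ka$ form along $\mathcal{D}_{K_i}$ is not a form of weight $ka$ on $\Gamma_i$ but of weight $ka+2j$, because the natural way to extract it is the quasi-pullback $F\mapsto (F/(\cdot,l_i)^{2j})|_{\mathcal{D}_{K_i}}$, and dividing by the weight-one form $(\cdot,l_i)^{2j}$ raises the weight by $2j$. (One also needs the observation that $F$ vanishes to \emph{even} order along $\mathcal{D}_{K_i}$ by $(-\sigma_{l_i})$-invariance, so the relevant jet orders are $2j$, $j=0,\dots,k/2-1$.) Iterating the resulting exact sequences gives
\begin{equation*}
h^0(ka\mathcal{L}-(k/2)B) \geq {\dim}\, M_{ka}({\Ost}(L)) - \sum_{i=1}^{r}\sum_{j=0}^{k/2-1}{\dim}\, M_{ka+2j}(\Gamma_i),
\end{equation*}
and bounding $ka+2j\leq k(a+1)$ in the inner sum is what produces $(a+1)^{n-1}$ rather than $a^{n-1}$, i.e.\ the factor $(1+1/a)^{1-n}$ in \eqref{eqn: big via HM volume}. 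Because $\dim M_{w}(\Gamma_i)$ grows like $w^{n-1}$, your fixed-weight count \emph{undercounts} the obstruction, so the "cleaner but weaker" inequality you derive is in fact a stronger (and unjustified) sufficient condition, not a weaker one; it cannot be repaired without the weight shift.

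Your proposed "cleanest route" does not close this gap. First, the criterion you invoke --- that $\operatorname{vol}(D)\geq D^{n}$ whenever $D=N-E$ with $N$ nef and $E$ effective --- is false (take $N=0$ and $E$ effective with $E^{n}>0$: then $D^{n}>0$ for $n$ even but $-E$ is not big); positivity of the top self-intersection implies bigness only for nef classes, and the known volume inequalities for differences require both classes nef. Second, the intersection numbers $\mathcal{L}^{n-j}\cdot B^{j}$ for $j\geq2$ involve the normal bundles of the branch divisors and are not supplied by the Hirzebruch--Mumford volumes ${\HM}(\Gamma_i)$, which control only the degree-$(n-1)$ data $\mathcal{L}^{n-1}\cdot\mathcal{D}_{K_i}$; no binomial resummation of such a series yields $\frac{2a}{n}(1+1/a)^{1-n}=\frac{2a^{n}}{n(a+1)^{n-1}}$, which is not of the form $(a-\tfrac12 c)^{n}$ for any $c$. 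The missing ingredient in both of your attempts is the same: the quasi-pullback identification of the jet obstruction spaces as $M_{ka+2j}(\Gamma_i)$, with the weight increasing along the filtration.
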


\begin{proof}
The ${\Q}$-divisor $a\mathcal{L}-B/2$ is big if the asymptotic estimate 
\begin{equation}\label{eqn:bigness estimate}
h^0(ka\mathcal{L}-(k/2)B) > c\cdot k^n  
\end{equation}
holds for some $c>0$, 
where $k$ grows under the condition that both $k$ and $ka$ are even numbers. 
We shall bound the left hand side. 
The key is the following estimate, which is essentially proved in \cite{G-H-S3} Proposition 4.1.  

\begin{lemma}[cf.~\cite{G-H-S3}]\label{quasi-pullback estimate}
When both $k$ and $ka$ are even numbers, we have 
\begin{equation}\label{eqn:quasipullback} 
h^0(ka\mathcal{L}-(k/2)B) \geq {\dim}\, M_{ka}({\Ost}(L)) - \sum_{i=1}^{r}\sum_{j=0}^{k/2-1}{\dim}\, M_{ka+2j}(\Gamma_i). 
\end{equation}
\end{lemma}

\begin{proof}
We present a proof for the convenience of the reader. 
If $j$ is a natural number, 
$H^0(ka\mathcal{L}-jB)$ is identified with the space of ${\Ost}(L)$-modular forms of weight $ka$ 
which have zero of order $\geq 2j$ along every $\mathcal{D}_{K_i}$. 
The quasi-pullback to $\mathcal{D}_{K_i}$ of such modular forms is defined by 
\begin{equation*}\label{eqn:quasi-pullback}
H^0(ka\mathcal{L}-jB) \to M_{ka+2j}(\Gamma_i), \qquad F\mapsto (F/(\cdot, l_i)^{2j})|_{\mathcal{D}_{K_i}}. 
\end{equation*}
Note that $F$ must have zero of even order along $\mathcal{D}_{K_i}$, by its invariance under $-\sigma_{l_i}$. 
Therefore we obtain the exact sequence  
\begin{equation*}
0 \to H^0(ka\mathcal{L}-(j+1)B) \to H^0(ka\mathcal{L}-jB) \to \bigoplus_{i=1}^{r}M_{ka+2j}(\Gamma_i). 
\end{equation*}
Iteration of this for $j=0,\cdots, k/2-1$ gives the desired estimate. 
\end{proof}

We study the asymptotic behavior of the right side of \eqref{eqn:quasipullback} 
with respect to $k$. 
For the first term, we have by \eqref{eqn:HM volume and modular forms}  
\begin{equation*}
{\dim}\, M_{ka}({\Ost}(L)) = (2/n!) \cdot {\HM}({\Ost}(L)) \cdot a^n \cdot k^n + O(k^{n-1}). 
\end{equation*}
The second term is estimated as 
\begin{eqnarray*}
& & \sum_{i=1}^{r}\sum_{j=0}^{k/2-1}{\dim}\, M_{ka+2j}(\Gamma_i)  \\ 
&=& \sum_{i=1}^{r}\sum_{j=0}^{k/2-1}\left\{ \frac{2}{(n-1)!}\cdot{\HM}(\Gamma_i)\cdot 
            (ka+2j)^{n-1}+O(k^{n-2})\right\}  \\ 
&\leq& \sum_{i=1}^{r}\frac{k}{2}\cdot\left\{ \frac{2}{(n-1)!}\cdot{\HM}(\Gamma_i)\cdot 
           (a+1)^{n-1}\cdot k^{n-1}+O(k^{n-2})\right\}.  
\end{eqnarray*}
Comparison of the coefficients of $k^n$ in these two asymptotics gives the condition \eqref{eqn: big via HM volume}. 
\end{proof}



\subsection{Proof of Theorem \ref{bigness}}\label{ssec:proof bigness}

Now we state key estimates, \eqref{eqn:sum over branch div} and \eqref{eqn:vol ratio}, 
and deduce Theorem \ref{bigness} from them. 
From now on we assume that 
\begin{equation}\label{condition:L=2U+M}
\text{$L$ is an even lattice of signature $(2, n)$ and contains $2U$.} 
\end{equation}
 
We shall be more specific in \eqref{eqn: big via HM volume}. 
For each type $*=AI,\cdots, BIII$ as in Proposition \ref{classify stab ref vect}, 
we write $\mathcal{R}(*)$ for the set of $\langle {\Ost}(L), -1\rangle$-equivalence classes of 
stably reflective vectors of type $*$. 
Then the left side of \eqref{eqn: big via HM volume} can be rewritten as 
\begin{equation}\label{eqn:(3.3)}
\sum_{*} \sum_{\mathcal{R}(*)} \frac{{\HM}(\Gamma)}{{\HM}({\Ost}(L))}
\end{equation}
where $K=l^{\perp}\cap L$ for $[l]\in R(*)$ and 
$\Gamma\subset{\Or}(K)$ is the image of the stabilizer of ${\Z}l$ in ${\Ost}(L)$. 
Since $\Gamma\supset{\Ost}(K)$, the formula \eqref{eqn:HM cofinite subgrp} shows that 
\begin{eqnarray*}
\eqref{eqn:(3.3)} 
& \leq & 
\sum_{*} \sum_{\mathcal{R}(*)} \frac{{\HM}({\Ost}(K))}{{\HM}({\Ost}(L))} \\ 
& = & 
\sum_{*} \sum_{\mathcal{R}(*)} \frac{|{\OAK}/\pm1|}{|{\OAL}/\pm1|} \cdot \frac{{\HM}({\Or}(K))}{{\HM}({\Or}(L))}.  
\end{eqnarray*}
Here the homomorphisms ${\Or}(L)\to{\OAL}$ and ${\Or}(K)\to{\OAK}$ are surjective by \cite{Ni}, 
because $L$ and $K$ contain $U$ (see Lemma \ref{K contains U} for $K$).

In the remaining sections we will prove the following. 

\begin{proposition}\label{sum over branch div}
For each type $*$ of stably reflective vector, we define the function $e_{\ast}(n)$ by 
the right end of Table \ref{summary table}. 
Then for any even lattice $L$ as in \eqref{condition:L=2U+M} we have 
\begin{equation}\label{eqn:sum over branch div}
\sum_{\mathcal{R}(*)} \frac{|{\OAK}/\pm1|}{|{\OAL}/\pm1|} \leq e_{\ast}(n). 
\end{equation}
\end{proposition}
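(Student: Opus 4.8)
The goal is a type-by-type upper bound on the weighted count of branch divisors
\[
\sum_{\mathcal{R}(*)} \frac{|{\OAK}/\pm1|}{|{\OAL}/\pm1|} \leq e_{\ast}(n),
\]
and the natural strategy is to bound each of the three ingredients separately for a fixed type $*$:
(i) the number of $\langle{\Ost}(L),-1\rangle$-orbits of stably reflective vectors of that type, i.e.\ the number of summands in $\mathcal{R}(*)$; (ii) the ratio of orders $|{\OAK}/\pm1|/|{\OAL}/\pm1|$ for each such orbit; and (iii) the way these interact, since for many types the orbit is essentially unique once the orthogonal complement $K=l^\perp\cap L$ is fixed up to isometry.
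First I would recall, following \cite{G-H-S1} \S3, the precise structure of $A_K$ in each of the five cases AI, AII, BI, BII, BIII. Because $L$ contains $2U$, in each case one has an explicit description of $K$ as an orthogonal sum of $U$ (or $U(D)$, $U(2)$, etc.) with a lattice whose discriminant form is controlled by $A_L$; this pins down $|A_K|$ in terms of $|A_L|$ and the invariant $\divi(l)$, hence pins down the ratio in (ii). The cases with $\sigma_l\equiv{\rm id}$ on $A_L$ (types AI, AII) are the simplest: there $A_K\cong A_L$ up to a cyclic factor of order $1$ or $2$ coming from $\langle l\rangle^\vee/\langle l\rangle$, so the ratio $|{\OAK}/\pm1|/|{\OAL}/\pm1|$ is bounded by a small absolute constant times the number of orbits.

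For step (i) the key observation is that, since $L\supset 2U$, Eichler's criterion applies: the $\Ost(L)$-orbit of a primitive vector $l$ of given norm is determined by the pair $((l,l),\,l^*\in A_L)$ where $l^*$ is the image of $l/\divi(l)$ in $A_L$. Hence $\mathcal{R}(*)$ injects into a set of such pairs, and for each type the norm $(l,l)$ and $\divi(l)$ are fixed by Proposition \ref{classify stab ref vect}, so the count reduces to counting the relevant elements $l^*\in A_L$ — for types B these are elements of order exactly $2$ in a specified coset (the stably reflective condition $\sigma_l\equiv\pm{\rm id}$ forces $l^*$ to be $2$-torsion or to generate the ${\Z}/D$ factor), which is bounded in terms of $|G_2|=|A_L[2]|$ and the structure ${\Z}/D\oplus({\Z}/2)^m$; for types A one gets a single orbit or a number of orbits bounded by $|A_L[2]|$ again. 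Combining (i) and (ii), the sum over $\mathcal{R}(*)$ of the order-ratio collapses, after using multiplicativity of $|{\rm O}(A)|$ over the Jordan decomposition and the elementary bound $|{\rm O}(A_K)|/|{\rm O}(A_L)|\le$ (a power of $2$ and $3$ times a factor depending only on the discrepancy between $A_K$ and $A_L$), to the closed-form function $e_*(n)$ recorded in Table \ref{summary table}; the dependence on $n$ enters only through the rank of the $2$-elementary part, which is at most $m+1\le n$, and in practice is controlled by $|G_2|\le 2^5$ from \eqref{eqn:estimate G2 G3} when $L$ is quasi-cyclic — though here we only assume $L\supset 2U$, so $e_*(n)$ is allowed to grow with $n$.

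The main obstacle I expect is the bookkeeping in cases BI and BII, where $A_L\cong{\Z}/D\oplus({\Z}/2)^m$ with $D$ even: there the orthogonal complement $K$ has a discriminant form that mixes the cyclic part of order $D$ or $D/2$ with the $2$-elementary part, and the isometry type of $A_K$ — hence $|{\rm O}(A_K)|$ — depends delicately on how the quadratic form restricts, in particular on the $2$-adic symbol and on whether $D\equiv0\pmod4$. One must check, case by case on the local symbols, that the stably-reflective vectors of a given type really do fall into the few $\Ost(L)$-orbits the Eichler argument predicts (this needs that $K$ itself contains $U$ — cf.\ the cited Lemma \ref{K contains U} — so that Eichler's criterion is available for $K$ as well, which is what legitimises comparing via $\Ost(K)$ in the first place), and then that the resulting order-ratios sum to something no larger than $e_*(n)$. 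The types AI, AII, and BIII should by contrast be quick: BIII already forces $m=0$ and $D$ odd, so $A_L$ is cyclic of odd order and $K$ is essentially $U(D)\oplus(\text{something of the same discriminant})$, giving an absolutely bounded contribution. I would organise the write-up as one lemma per type, each ending by reading off the entry of Table \ref{summary table}, and then assemble Proposition \ref{sum over branch div} as their union.
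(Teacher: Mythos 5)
Your outline assembles the right ingredients (Eichler's criterion, the case-by-case structure of $A_K$, the reduction to counting elements $[l^{\ast}]\in A_L$ of prescribed order and norm), but it misses the one mechanism that makes the entries of Table \ref{summary table} achievable, and as written it cannot produce them. You propose to bound separately (i) the number of summands in $\mathcal{R}(\ast)$ and (ii) the ratio $|{\OAK}|/|{\OAL}|$, and then multiply. For types AI and BI this fails: by Eichler the number of $\langle{\Ost}(L),-1\rangle$-classes equals the cardinality of a set such as $\SAI=\{x\in(A_L)_2 : {\ord}(x)=2,\ (x,x)\equiv-1/2\}$, which under the sole hypothesis $L\supset 2U$ can be of size roughly $2^{l((A_L)_2)}$, i.e.\ exponential in $n$; multiplying by the trivial bound $|{\OAK}|/|{\OAL}|\le 1$ gives something like $2^{n}$, not the constant $9$ demanded by the table. (Your own remark that ``$e_*(n)$ is allowed to grow with $n$'' concedes this, but the table asserts $e_{AI}(n)\le 9$ and $e_{BI}(n)\le 9$ independently of $n$, and the final bigness argument in \S\ref{ssec:proof bigness} needs bounds of that quality.) The missing idea is that in the split cases ${\OAK}$ is precisely the stabilizer of $x=[l^{\ast}]$ in ${\OAL}$, so by orbit--stabilizer the weighted sum telescopes:
\begin{equation*}
\sum_{x\in\SAI}\frac{|{\OAK}|}{|{\OAL}|}=\sum_{x\in\SAI}\frac{1}{|{\OAL}\cdot x|}=\#\bigl(\SAI/{\OAL}\bigr),
\end{equation*}
i.e.\ one must count orbits under the full finite group ${\OAL}$, not under ${\Ost}(L)$. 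That orbit count is then bounded by an absolute constant via the Kawauchi--Kojima invariants $\sigma_r$ of $x^{\perp}\cap(A_L)_2$ (only $\sigma_2$ can vary, whence $\le 9$). Without this collapse, or some substitute for it, the proposition is not proved.

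Two smaller points. First, your parenthetical claim that for types B the stably reflective condition forces $l^{\ast}$ to be $2$-torsion is wrong: for BI one has $[l/D]$ of order $D$ and for BII $[2l/D]$ of order $D/2$; the reduction to the $2$-part happens only after observing that $\prod_{p>2}{\rm O}(A_L)_p$ acts transitively on the odd part of the relevant set (because $(A_L)_p$ is cyclic for $p>2$). Second, you never address the passage from $\sum_{\mathcal{S}(\ast)}|{\OAK}|/|{\OAL}|$ to the quantity $\sum_{\mathcal{R}(\ast)}|{\OAK}/\pm1|/|{\OAL}/\pm1|$ actually appearing in \eqref{eqn:sum over branch div}; this requires checking when $-1$ acts freely on $\mathcal{S}(\ast)$ and when $-1\in{\OAK}$ versus ${\OAL}$ is nontrivial, and it contributes an extra factor $2$ exactly in the BII, $D=4$ case. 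Neither point is fatal to the overall strategy, but both must be repaired before the type-by-type lemmas you envisage can be written down.
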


\begin{proposition}\label{vol ratio}
For each type $\ast$ of stably reflective vector, 
we define the function $f_{\ast}(n)$ 
by Proposition \ref{vol estimate A} and Corollary \ref{estimate B by n AL}. 
Then for any even lattice $L$ as in \eqref{condition:L=2U+M} and stably reflective vector $l\in L$ of type $\ast$ 
with orthogonal complement $K=l^{\perp}\cap L$, 
we have 
\begin{equation}\label{eqn:vol ratio}
\frac{{\HM}({\Or}(K))}{{\HM}({\Or}(L))} < f_{\ast}(n)\cdot |A_L|^{-1/2}. 
\end{equation}
\end{proposition}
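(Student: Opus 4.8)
The plan is to prove Proposition \ref{vol ratio} by analyzing, for each type $\ast$ of stably reflective vector, how the Hirzebruch-Mumford volume changes when passing from $L$ to the orthogonal complement $K = l^\perp \cap L$. The starting point is the product formula for $\HM({\Or}(L))$ from \cite{G-H-S2}, which expresses $\HM$ as a product of a global archimedean factor (depending only on $n$) and local densities $\alpha_p(L)$ over all primes $p$. Consequently the ratio $\HM({\Or}(K))/\HM({\Or}(L))$ factors as a ratio of archimedean terms, which depends only on $n$ (since $K$ has signature $(2,n-1)$), times a product over primes $p$ of local density ratios $\alpha_p(K)/\alpha_p(L)$. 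So the core task is: (i) bound the archimedean ratio by an explicit function of $n$; (ii) bound the product of local ratios.

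For step (ii) the key observation is that $L$ contains $2U$, so for each type we can normalize the stably reflective vector $l$: by the classification in Proposition \ref{classify stab ref vect} together with the $2U$ summand, $l$ may be taken inside one copy of $U$ (plus, in some types, a contribution to $M$), so that $K$ has the shape $2U \oplus (\text{small correction}) \oplus M'$ or $U \oplus M''$ with $M', M''$ closely related to $M$. At almost all primes $p$ (those not dividing $2(l,l)$ or $|A_L|$) the lattices $L$ and $K \oplus \langle (l,l)\rangle$ are $p$-adically equivalent up to a unimodular summand, so the local factors essentially cancel and $\alpha_p(K)/\alpha_p(L)$ is controlled by the standard Siegel local density formulas; the accumulated deviation over all such $p$ is bounded by a constant. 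At the bad primes one uses the explicit discriminant-form computations — exactly the structural facts about $A_L$, $A_K$ and their relation established in the Appendix and used already in \S\ref{sec:cusp form} — to bound each local ratio. The point of the $|A_L|^{-1/2}$ on the right of \eqref{eqn:vol ratio} is that the product of local densities carries an intrinsic factor comparable to $|A_L|^{1/2}/|A_K|^{1/2}$ (roughly $\det(L)$ versus $\det(K)$), and since $\det(K) = \divi(l)^{-1}|(l,l)|\cdot|A_L|$ up to bounded factors depending only on the type, one extracts $|A_L|^{-1/2}$ and absorbs the rest into $f_\ast(n)$.

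Concretely I would organize the proof as: first isolate, in a lemma, the precise relationship between $A_K$ and $A_L$ for each of the five types (using $2U \subset L$ to pin down $K$), recording $\det K$ in terms of $\det L$ and the type data — this is presumably Lemma \ref{K contains U} and its neighbours. Second, prove the archimedean estimate $\HM_\infty(K)/\HM_\infty(L) \le c(n)$ by a direct computation with the Hirzebruch proportionality constants / the known closed form for the archimedean factor of an $O(2,m)$ lattice, giving an explicit function of $n$. Third, prove the non-archimedean estimate: $\prod_p \alpha_p(K)/\alpha_p(L) \le c' \cdot |A_L|^{-1/2} \cdot |\det K / \det L|^{1/2} \cdot (\text{type factor})$, splitting primes into the generic ones (handled uniformly, contributing a universal constant via convergence of the relevant Euler product) and the finitely many bad ones dividing $2|A_L|$ (handled by the explicit quasi-cyclic / $2U$-containing structure). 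Combining the three gives \eqref{eqn:vol ratio} with $f_\ast(n)$ assembled from $c(n)$, $c'$ and the type-dependent constants, matching the statements in Proposition \ref{vol estimate A} and Corollary \ref{estimate B by n AL}.

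The main obstacle will be step three at the bad primes, especially $p = 2$ and the prime(s) dividing $D$ in the type-(B) cases, where the local densities $\alpha_2(L)$ and $\alpha_2(K)$ depend delicately on the $2$-adic Jordan decomposition (parity, oddity, presence of the even forms $u,v$) and where $\divi(l)$ interacts with the discriminant form. There one cannot avoid a somewhat intricate case analysis; the saving grace is that the hypothesis $2U \subset L$ (hence $U \subset K$, Lemma \ref{K contains U}) forces the $2$-adic symbol of $L$ to be of a constrained form and makes $K$ computable from $L$ and the type, so the number of genuinely distinct cases is small. A secondary subtlety is ensuring the generic-prime product converges with a bound independent of $L$ and $n$; this follows from the standard estimate $\alpha_p = 1 + O(p^{-2})$ for the local densities of lattices of rank $\ge 5$, valid here since $n \ge 15$ makes $K$ have rank $n-1 \ge 14$, so that $\prod_p \alpha_p$ is bounded above and below by absolute constants and only the finitely many bad Euler factors need individual attention.
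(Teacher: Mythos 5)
Your proposal follows essentially the same route as the paper: factor the Hirzebruch--Mumford volume into an archimedean part (a function of $n$ alone) and local densities, bound the generic Euler factors by absolute/convergent products, extract the $|A_L|^{-1/2}$ from the mismatch of determinant exponents between $L$ and $K$ (the paper gets exactly $(|A_K|/|A_L|)^{n/2+1}\cdot|A_L|^{-1/2}$ out of the $C$-terms), and handle the bad primes --- above all $p=2$ and $p\mid D$ --- by comparing explicit Jordan decompositions of $L$ and $K$ type by type. The only caveat is that your outline defers precisely the part where the paper's work is concentrated (the case-by-case $2$-adic estimates producing the explicit constants in $f_\ast(n)$, and the absorption of the $D_{\ne 2}\cdot 2^{\omega(D)}$ factor by $(|A_K|/|A_L|)^{n/2+1}\sim D^{-n/2}$ in the type-B cases), but the strategy itself is the paper's.
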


Assuming \eqref{eqn:sum over branch div} and \eqref{eqn:vol ratio}, 
we obtain the estimate 
\begin{equation*}
\eqref{eqn:(3.3)} \; < \; |A_L|^{-1/2} \cdot \sum_{\ast} e_{\ast}(n) f_{\ast}(n). 
\end{equation*}
Incorporating this with Proposition \ref{big via HM volume}, 
we see that $a\mathcal{L}-B/2$ is big whenever 
\begin{equation}\label{eqn:final estimate for bigness}
\sqrt{|A_L|}  \; \geq \;  \frac{n}{2a} \cdot \left( 1+\frac{1}{a} \right)^{n-1} \cdot \sum_{\ast} e_{\ast}(n) f_{\ast}(n).  
\end{equation}
This first implies the finiteness at each fixed $n$, 
because we have only finitely many even lattices $L$ of fixed signature and with $|A_L|$ bounded. 
Next, the explicit form of $e_{\ast}(n)$ and $f_{\ast}(n)$ 
tells that the right side of \eqref{eqn:final estimate for bigness} converges to $0$ as $n\to\infty$. 
Hence \eqref{eqn:final estimate for bigness} holds for any $L$ if $n$ gets sufficiently large. 
This proves Theorem \ref{bigness}.

The proof of Propositions \ref{sum over branch div} and \ref{vol ratio} shall be carried out respectively in 
\S \ref{sec:sum over branch div} and \S \ref{sec:vol ratio}, through case-by-case estimate. 
The calculation is certainly lengthy, but explicit. 
Especially, it tells how to improve the bounding function $\sum_{\ast}e_{\ast}(n)f_{\ast}(n)$ 
when the class of lattices is specified, 
which will be important for making Theorem \ref{bigness} as effective as possible.

 
\section{Proof of Proposition \ref{sum over branch div}}\label{sec:sum over branch div}

Throughout this section $L$ is assumed to be an even lattice as in \eqref{condition:L=2U+M}. 
For each type of stably reflective vectors $l$, 
we will describe the discriminant form $A_K$ of the orthogonal complement  
\begin{equation*}
K=l^{\perp}\cap L, 
\end{equation*}
and then derive estimates as in Proposition \ref{sum over branch div}. 
Let $\mathcal{S}(*)$ denote the set of ${\Ost}(L)$-equivalence classes of stably reflective vectors of type $*=AI,\cdots, BIII$. 
Though our original target is to estimate
\begin{equation}\label{eqn:stab ortho ratio +-1}
\sum_{\mathcal{S}(*)/\pm1} |{\OAK}/\pm1| / |{\OAL}/\pm1|, 
\end{equation} 
it is more convenient to deal with 
\begin{equation}\label{eqn:stably reflect ratio}
\sum_{\mathcal{S}(*)} |{\OAK}| / |{\OAL}|. 
\end{equation}
We will estimate \eqref{eqn:stably reflect ratio} first; 
the relation with \eqref{eqn:stab ortho ratio +-1} 
is clarified in Lemma \ref{eqn:estimate after /-1}. 
The results of this section will be finally summarized in Table \ref{summary table}.

\subsection{Preliminaries}

Let us prepare some tools and notations. 
We write  
\begin{equation*}
A_L = \bigoplus_{p} (A_L)_p 
\end{equation*} 
for the decomposition into $p$-components. 
If we denote ${\rm O}(A_L)_p = {\rm O}((A_L)_p)$, 
we have the canonical decomposition 
\begin{equation*}
{\OAL} = \prod_{p} {\rm O}(A_L)_p. 
\end{equation*} 

When studying $A_K$, the following technique (\cite{Ni}) will be used. 

\begin{lemma}\label{Nikulin gluing}
Suppose that $L$ contains ${\Z}l\oplus K$ by index $2$. 
Then there exists an element $\lambda\in A_K$ of order $2$ and norm $-(l, l)/4$ mod $2{\Z}$ such that 
the subgroup $L/({\Z}l\oplus K)$ of $A_{{\Z}l\oplus K}$ is given by the element $\lambda'=[l/2]+\lambda$. 
Furthermore, $A_L$ is isometric to the quadratic form induced on $(\lambda')^{\perp}/\lambda'$. 
\end{lemma}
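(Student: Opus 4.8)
The plan is to prove this standard overlattice/gluing lemma (essentially Nikulin's \cite{Ni} Proposition 1.5.1 adapted to the present $\Z/2$-situation) by direct computation with the discriminant quadratic forms. Recall the setup: $l\in L$ is a stably reflective vector of some type B (or AI), so that $\Z l\oplus K\subset L$ with index $2$, where $K=l^{\perp}\cap L$. Write $q_0$ for the (finite quadratic) form on $A_{\Z l}=\Z l^{\vee}/\Z l\cong \Z/(l,l)$ and $q_K$ for that on $A_K$; then $A_{\Z l\oplus K}=A_{\Z l}\oplus A_K$ with form $q_0\oplus q_K$. The inclusion $\Z l\oplus K\subset L\subset L^{\vee}\subset (\Z l\oplus K)^{\vee}$ exhibits $L/(\Z l\oplus K)$ as an \emph{isotropic} subgroup $H$ of $A_{\Z l\oplus K}$ with $H^{\perp}/H\cong A_L$ as quadratic forms — this last isomorphism is exactly the content of the final sentence, and it is the general overlattice principle which I may simply cite.

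First I would pin down $H$. Since $[L:\Z l\oplus K]=2$, $H$ is a group of order $2$, say $H=\langle x\rangle$ with $x$ of order $2$. Write $x=[v]+\lambda$ with $v\in\Z l^{\vee}$ representing the $A_{\Z l}$-component and $\lambda\in A_K$ the $A_K$-component. The key point is that the $A_{\Z l}$-component of $x$ must be exactly $[l/2]$: indeed $L$ contains an element of the form $l/2 + w$ with $w\in K^{\vee}$ (any generator of $L$ over $\Z l\oplus K$ has half-integral coefficient along the rank-one summand, and after adjusting by $\Z l$ we may take the coefficient to be $1/2$ and $v$ to lie in $K\otimes\Q$; intersecting with $L^{\vee}$ forces $w\in K^{\vee}$). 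This identifies the $\Z l$-component, and we set $\lambda=[w]\in A_K$. Because $2x=0$ in $A_{\Z l\oplus K}$ and $2[l/2]=[l]$ has order dividing $2$ automatically in $A_{\Z l}$ only when... — more carefully, $2x=0$ means $[l]+2\lambda=0$ in each summand, hence $2\lambda=0$ in $A_K$, so $\lambda$ has order $\le 2$; order exactly $2$ because otherwise $x=[l/2]\in A_{\Z l}$, which is isotropic only if $(l,l)/4\in 2\Z$, and in general we do not assume that (one can note $\lambda=0$ would force $(l,l)\equiv 0\bmod 8$, the generic type-B vectors having $(l,l)=-D$ with $D$ even but not $\equiv 0\bmod 8$; in the degenerate case $\lambda=0$ the statement is vacuously consistent and can be folded in).

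Next I would compute the norm of $\lambda$. Isotropy of $x$ gives $0\equiv q(x)=q_0([l/2])+q_K(\lambda)\bmod 2\Z$. Since $(l/2,l/2)=(l,l)/4$, we get $q_K(\lambda)\equiv -(l,l)/4 \bmod 2\Z$, which is the asserted norm. This disposes of the first sentence. For the final sentence I invoke the overlattice principle: the map $y\mapsto (\text{image of }y)$ sends $H^{\perp}/H\subset A_{\Z l\oplus K}$ isomorphically, as a finite quadratic form, onto $A_L = L^{\vee}/L$; concretely $L^{\vee}/(\Z l\oplus K)$ is the preimage $H^{\perp}$ of $0$ under the "pairing with $H$" map $A_{\Z l\oplus K}\to \Q/\Z$, and quotienting by $L/(\Z l\oplus K)=H$ gives $L^{\vee}/L$. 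Writing $\lambda'=[l/2]+\lambda$ for the chosen generator of $H$ makes this completely explicit and matches the statement verbatim.

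The only genuine subtlety — the step I expect to cost the most care — is the assertion that the $A_{\Z l}$-component of the generator of $H$ is precisely $[l/2]$ and not some other element of $\frac{1}{2}\Z l/\Z l$ or a non-half-integral class; this is where $\divi(l)$ and the precise value of $(l,l)$ (namely $(l,l)=-\divi(l)$ in the index-$2$ situation, cf.\ the remark before Proposition \ref{classify stab ref vect}) enter, guaranteeing that $l/\divi(l)\in L^{\vee}$ pairs correctly and that no smaller denominator occurs. Once this normalization is nailed down, everything else is the routine unwinding of the overlattice correspondence, so I would present the generator-identification step in full and treat the quadratic-form bookkeeping tersely.
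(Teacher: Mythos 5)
The paper does not actually prove this lemma --- it is quoted from Nikulin and the reader is referred to \cite{Ni} \S 1.5 --- so your direct verification is welcome, and its overall structure (identify the index-$2$ subgroup $H=L/(\Z l\oplus K)$ of $A_{\Z l\oplus K}$, pin down its generator, read off the norm from isotropy, and invoke the overlattice correspondence $A_L\simeq H^{\perp}/H$) is exactly the standard argument that Nikulin's general theory specializes to. The identification of the $A_{\Z l}$-component as $[l/2]$, the norm computation $q_K(\lambda)\equiv -(l,l)/4$, and the final appeal to the overlattice principle are all correct.

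There is one step that does not hold up as written: your argument that $\lambda$ has order exactly $2$. You rule out $\lambda=0$ by observing that $[l/2]$ alone is isotropic only when $(l,l)\equiv 0\bmod 8$, and then wave at the remaining case as ``vacuously consistent.'' But $(l,l)\equiv 0\bmod 8$ genuinely occurs in the paper's applications (type BII with $8\mid D$), and if $\lambda=0$ the lemma's conclusion --- the existence of an order-$2$ gluing element --- is simply false, not vacuous. The correct argument is independent of $(l,l)$: if $\lambda=0$ then the generator of $L$ over $\Z l\oplus K$ is $l/2+w$ with $w\in K$, whence $l/2\in L$ and $l$ is not primitive in $L$; since $l$ is primitive in every situation where the lemma is invoked (it is a stably reflective vector with $K=l^{\perp}\cap L$), this cannot happen. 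Replace the isotropy heuristic by this primitivity argument and the proof is complete. A second, inessential slip: the index-$2$ situation corresponds to $(l,l)=-2\,{\divi}(l)$, not $(l,l)=-{\divi}(l)$ (the latter gives the split case $L=\Z l\oplus K$), as in \S\ref{ssec:AII} of the paper.
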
  
 
See \cite{Ni} \S 1.5 for the full account.  
We will call $\lambda\in A_K$ the \textit{gluing element} for the extension $L\supset{\Z}l\oplus K$.

For estimating \eqref{eqn:stably reflect ratio} the following \textit{Eichler criterion} will be essential 
(see, e.g., \cite{Sc} Section 3.7). 
It is primarily this proposition where we need the assumption that $L$ contains $2U$. 
For a primitive vector $l\in L$ we denote $l^{\ast}=l/{\divi}(l)\in L^{\vee}$. 

\begin{proposition}[Eichler criterion]\label{Eichler criterion} 
The ${\Ost}(L)$-equivalence class of a primitive vector $l$ of $L$ is 
determined by the norm $(l, l)$ and the element $[l^{\ast}]\in A_L$. 
\end{proposition}

This will be applied with the following supplement, which is a trivial generalization of \cite{Sc} Lemma 4.1.1. 

\begin{lemma}\label{Eichler supple}
Let $x\in A_L$ be an element of order $d$ and norm $\equiv r$ mod $2{\Z}$ with $r\in{\Q}$. 
Writing $L=2U\oplus M$, 
we can find a primitive vector $l$ from the sublattice $U\oplus M$ such that 
${\divi}(l)=d$, $(l, l)=d^2r$ and $[l^{\ast}]=x\in A_L$. 
\end{lemma}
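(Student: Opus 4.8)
\textbf{Proof proposal for Lemma \ref{Eichler supple}.}

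The plan is to lift the element $x\in A_L$ to a primitive vector inside the sublattice $U\oplus M$ using the hyperbolic plane $U$ to adjust the divisor and the norm freely, exactly as in the cyclic case of \cite{Sc} Lemma 4.1.1. Write $L=2U\oplus M$ and isolate one copy $U$ together with $M$, so $U\oplus M$ is a primitive sublattice of $L$ with $A_{U\oplus M}=A_M\subset A_L$ (the second copy of $U$ is unimodular and contributes nothing to the discriminant). Since $x\in A_L=A_M$, we may and do regard $x$ as an element of $A_M=(U\oplus M)^\vee/(U\oplus M)$. First I would choose an actual representative $\mu\in M^\vee$ with $[\mu]=x$ in $A_M$; then $d\mu\in M$ and $(\mu,\mu)\equiv r$ mod $2\Z$ by hypothesis, but $d\mu$ need not be primitive in $M$ and $\divi$ computed in $M$ alone need not equal $d$. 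The copy of $U$ fixes both defects.

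The key step is the following: let $e,f$ be the standard hyperbolic basis of $U$ with $(e,e)=(f,f)=0$, $(e,f)=1$, and set
\begin{equation*}
l \; = \; d\mu \; + \; e \; + \; c\, f \quad\text{for a suitable } c\in\Z.
\end{equation*}
Here $d\mu\in M$, so $l\in U\oplus M\subset L$. One computes $(l,l)=d^2(\mu,\mu)+2c$, so choosing $c=(d^2 r-d^2(\mu,\mu))/2$, which lies in $\Z$ because $(\mu,\mu)\equiv r$ mod $2\Z$ and the lattice is even, gives $(l,l)=d^2 r$. Next, because $l$ has the primitive vector $e$ as one of its $U$-components, $l$ is automatically primitive in $U\oplus M$ (any common divisor of the coordinates of $l$ would divide the $e$-coefficient $1$), hence primitive in $L$ as $U\oplus M$ is a primitive sublattice. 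For the divisor: for any $v\in U\oplus M$, $(l,v)=(d\mu,v_M)+(e+cf,v_U)$; pairing with $f$ gives $(l,f)=1$ among the values of $(l,\,\cdot\,)$ on $U\oplus M$, wait — we need $\divi(l)=d$, so instead I would pair the $U$-part so that it contributes nothing to $\divi$: since $(e+cf,\,\cdot\,)$ already takes the value $1$ on $U$, we would get $\divi_{U\oplus M}(l)=1$, not $d$. Hence the correct choice is $l=d\mu+e$ only when $d\mid$ everything; the right construction is $l=d\mu+d e+c f$ with $c$ chosen so that $\gcd$ of the $U$-contributions to $(l,\,\cdot\,)$ is $d$ — concretely $(l,e)=c$, $(l,f)=d$, and one picks $c$ with $\gcd(c,d)=d$, i.e. $d\mid c$, together with the norm adjustment $(l,l)=d^2(\mu,\mu)+2cd=d^2r$, forcing $c=(dr-d(\mu,\mu))/2\cdot(1/1)$; here one must verify $d\mid c$, which holds because $d\mu\in M^\vee$ pairs into $\frac1d\Z$ with $M$ while $(l,M)\subseteq d\Z$ forces the cross terms to be divisible appropriately. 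I would then check directly that $(l,U\oplus M)=d\Z$, so $\divi(l)=d$ (the divisor is the same computed in $L$ since $U\oplus M\hookrightarrow L$ is primitive with unimodular complement), and that $l^\ast=l/d\equiv \mu$ mod $U\oplus M$, i.e. $[l^\ast]=x$ in $A_L$.

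The main obstacle is bookkeeping rather than conceptual: one must choose the $U$-coefficients so that simultaneously (i) $l$ is primitive, (ii) $\divi(l)=d$ exactly — not a proper divisor or multiple of $d$ — and (iii) $(l,l)=d^2 r$, and verify these are mutually compatible over $\Z$ using only the parity/integrality hypotheses $x$ has order $d$ and $(\text{repr.})^2\equiv r$ mod $2\Z$. The cleanest route is to first replace $\mu$ by $\mu+($element of $M)$ so that the representative is as convenient as possible, then solve for the single integer $c$ above; compatibility of (ii) and (iii) is where the evenness of $L$ and the order-$d$ hypothesis get used, and I expect this to reduce to one congruence mod $2d$ that is automatically satisfied. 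Everything else — primitivity, the computation of $\divi(l)$ and of $[l^\ast]$, and the fact that passing from $U\oplus M$ to $L=2U\oplus M$ changes neither — is routine and essentially identical to \cite{Sc} Lemma 4.1.1, which handles the $d=$ prime-power cyclic case; the generalization to arbitrary $d$ and the ambient $2U$ is purely formal.
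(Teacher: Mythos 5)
Your final construction $l=d\mu+de+cf$ with $c=d(r-(\mu,\mu))/2$ is exactly the paper's vector $l=d(e+kf+m)$ with $k=(r-(m,m))/2$, and the verifications of primitivity, $\divi(l)=d$ and $[l^{\ast}]=x$ proceed the same way, so this is essentially the paper's proof. One small correction: the divisibility $d\mid c$ is not forced by any pairing condition as you suggest, but is simply automatic from the parity hypothesis $(\mu,\mu)\equiv r$ mod $2\Z$, which makes $(r-(\mu,\mu))/2$ an integer.
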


\begin{proof}
Since $A_M=A_L$, we can take a vector $m\in M^{\vee}$ with $[m]=x\in A_L$. 
Since $(m, m)\equiv r$ mod $2{\Z}$, 
we may write $r-(m, m)=2k$ for some integer $k$. 
Let $e, f$ be the standard hyperbolic basis of $U$. 
Then the vector 
\begin{equation*}
l = d(e+kf+m) \in U\oplus M^{\vee}
\end{equation*}
is contained in $U\oplus M$ and has norm $d^2r$. 
It is primitive in $L$ because $d'm\notin M$ for any $0<d'<d$. 
Since ${\divi}(d(e+kf))=d$, we see that ${\divi}(l)=d$. 
Therefore $[l^{\ast}]=[m]=x$. 
\end{proof}

With this lemma, Eichler's criterion implies the following. 

\begin{proposition}\label{Eichler+}
The set of ${\Ost}(L)$-equivalence classes of primitive vectors $l\in L$ with 
$(l, l)=N$ and ${\divi}(l)=d$ can be identified with the set 
\begin{equation*}
\{ x\in A_L \: | \: {\ord}(x)=d, \: (x, x) \equiv d^{-2}N \: {\rm mod} \: 2{\Z} \}, 
\end{equation*}
by the correspondence $l\mapsto[l^{\ast}]=[l/d]\in A_L$. 
\end{proposition}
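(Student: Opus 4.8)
\textbf{Proof proposal for Proposition \ref{Eichler+}.}

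The plan is to combine the Eichler criterion (Proposition \ref{Eichler criterion}) with the existence Lemma \ref{Eichler supple}, checking that the assignment $l\mapsto[l^\ast]$ is a well-defined bijection onto the stated set. First I would verify that the map lands in the indicated set: if $l\in L$ is primitive with $(l,l)=N$ and $\divi(l)=d$, then $l^\ast=l/d\in L^\vee$ has image $[l^\ast]\in A_L$ of order exactly $d$ (this is precisely the definition of $\divi(l)$, since $d=[\Q l\cap L^\vee:\Z l]$ forces $d\cdot l^\ast=l\in L$ and no smaller multiple of $l^\ast$ lies in $L$), and $(l^\ast,l^\ast)=(l,l)/d^2=N/d^2$, so $(x,x)\equiv d^{-2}N \bmod 2\Z$ with $x=[l^\ast]$. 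This shows the correspondence is well-defined as a map from $\Ost(L)$-classes (it is clearly constant on orbits, since $\Ost(L)$ acts trivially on $A_L$ by definition of the stable orthogonal group) to the target set.

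Next I would prove injectivity, which is exactly the content of the Eichler criterion: if two primitive vectors $l_1,l_2$ have the same norm $N$ and the same class $[l_1^\ast]=[l_2^\ast]\in A_L$, then Proposition \ref{Eichler criterion} gives $\gamma\in\Ost(L)$ with $\gamma l_1=l_2$, so they represent the same $\Ost(L)$-class. (Here one uses that $L$ contains $2U$, which is guaranteed by \eqref{condition:L=2U+M}.) Note the common divisor $d$ is automatically recovered as $\ord([l_i^\ast])$, so no separate hypothesis is needed.

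For surjectivity I would invoke Lemma \ref{Eichler supple}: given $x\in A_L$ of order $d$ with $(x,x)\equiv d^{-2}N\bmod 2\Z$, write $L=2U\oplus M$, set $r=d^{-2}N$, and apply the lemma with this $x$ and $r$. It produces a primitive vector $l\in U\oplus M\subset L$ with $\divi(l)=d$, $(l,l)=d^2 r=N$, and $[l^\ast]=x$. Hence every element of the target set is hit.

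I do not expect a serious obstacle here; the proposition is essentially a repackaging of the two preceding results. The only point requiring a little care is the bookkeeping around $\divi(l)$ versus $\ord([l^\ast])$ — namely that primitivity of $l$ in $L$ together with $\divi(l)=d$ is exactly equivalent to $[l/d]$ having order $d$ in $A_L$ — but this is immediate from the definitions and needs no computation. One should also make sure the norm condition is stated modulo $2\Z$ consistently (since $L$ is even, norms of $L^\vee$-vectors are well-defined in $\Q/2\Z$ on $A_L$), which matches the formulation of Lemma \ref{Eichler supple}.
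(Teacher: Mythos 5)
Your proposal is correct and matches the paper's intent exactly: the paper gives no separate proof, stating only that the proposition follows from Eichler's criterion together with Lemma \ref{Eichler supple}, which is precisely the well-definedness/injectivity/surjectivity bookkeeping you carry out. The identification $\divi(l)=\ord([l^\ast])$ for primitive $l$ and the triviality of the $\Ost(L)$-action on $A_L$ are the right points to check, and you handle both correctly.
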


We can also deduce the following.  

\begin{lemma}\label{K contains U}
The orthogonal complement $K=l^{\perp}\cap L$ of any primitive vector $l\in L$ contains $U$. 
\end{lemma}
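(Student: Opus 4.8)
The statement to prove is that $K = l^\perp \cap L$ contains a copy of the hyperbolic plane $U$, for any primitive vector $l$ in an even lattice $L$ of signature $(2,n)$ containing $2U$. The natural strategy is to exploit the abundance of isotropic vectors and hyperbolic planes inside $L$ forced by the hypothesis $L \supset 2U$, and to pass from a hyperbolic plane orthogonal to $l$ in $L \otimes \Q$ to one that is a primitive sublattice of $K$.

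First I would write $L = 2U \oplus M$ and use the Eichler criterion (Proposition \ref{Eichler criterion}) together with Lemma \ref{Eichler supple}: up to the action of $\widetilde{{\rm O}}^+(L)$, which does not change the isometry type of $K$, I may replace $l$ by any primitive vector with the same norm $N=(l,l)$ and the same value $[l^\ast]\in A_L$. By Lemma \ref{Eichler supple} (applied with $d = \divi(l)$, $r \equiv d^{-2}N$) I can arrange that $l$ lies in the sublattice $U \oplus M$, using only the \emph{first} copy of $U$ in $2U = U \oplus U$. Then the second copy of $U$ is entirely orthogonal to $l$, hence is a sublattice of $K$; moreover it is a primitive sublattice of $L$ (being a direct summand), and therefore primitive in $K$ as well. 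This $U$ is the desired copy.

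The one point that needs a line of care is the reduction step: Lemma \ref{Eichler supple} produces a vector $l'$ with $\divi(l') = \divi(l)$, $(l',l') = (l,l)$ and $[l'^\ast] = [l^\ast]$, so by Proposition \ref{Eichler criterion} there is $g \in \widetilde{{\rm O}}^+(L)$ with $g(l) = l'$; then $g$ restricts to an isometry $K = l^\perp \cap L \to (l')^\perp \cap L$, so it suffices to prove the claim for $l'$, which is what the previous paragraph does. I would also note that the hypothesis $L \supset 2U$ (rather than merely $L \supset U$) is precisely what is used twice here — once to have the Eichler criterion available, and once to have a \emph{spare} copy of $U$ left over after placing $l$ in $U \oplus M$.

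I do not anticipate a genuine obstacle; the only mild subtlety is making sure the leftover $U$ is primitive in $K$, which is immediate since an orthogonal direct summand of $L$ is primitive in $L$ and hence in any sublattice containing it. An alternative, slightly more self-contained route would avoid invoking Eichler and instead argue directly: an even lattice of signature $(2,n)$ containing $2U$ contains an isotropic vector $e$ with $(e,l) = 0$ and $\divi_L(e) = 1$ (solve the relevant linear and quadratic conditions inside the two hyperbolic planes), and then a standard hyperbolic-plane-splitting argument (as in \cite{Ni}) extracts $U$ from $e^\perp \cap K$; but the Eichler-criterion argument above is shorter given what has already been set up in the paper.
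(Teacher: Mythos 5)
Your proposal is correct and is essentially identical to the paper's proof: the paper also uses the Eichler criterion together with Lemma \ref{Eichler supple} to move $l$ into the sublattice $U\oplus M$, so that the spare copy of $U$ lies in $K$. Your extra remarks on the isometry $K\to (l')^\perp\cap L$ and on primitivity of the leftover $U$ are correct details that the paper leaves implicit.
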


\begin{proof}
By the Eichler criterion and Lemma \ref{Eichler supple}, 
$l$ is ${\Ost}(L)$-equivalent to a primitive vector $l'$ in $U\oplus M\subset L$. 
Clearly $(l')^{\perp}\cap L$ contains $U$. 
\end{proof}

We are now ready to start our analysis.

\subsection{Type AI}\label{ssec:AI} 

Let $l\in L$ be a vector with $(l, l)=-2$ and ${\divi}(l)=2$, which is always stably reflective. 
Since $(l, l)=-{\divi}(l)$, we have the splitting 
\begin{equation*}
L = {\Z}l \oplus K \simeq \langle -2 \rangle \oplus K. 
\end{equation*}
Accordingly, if we denote $b=A_{\langle -2 \rangle}$, we have 
$A_L \simeq b \oplus A_K$. 

By Proposition \ref{Eichler+}, 
the set of ${\Ost}(L)$-equivalence classes of vectors $l\in L$ with $(l, l)=-2$ and ${\divi}(l)=2$ 
can be identified with the set 
\begin{equation*}
{\SAI} = \{ x\in (A_L)_2 \: | \: {\ord}(x)=2, (x, x)\equiv -1/2 \}, 
\end{equation*} 
through the correspondence $l\mapsto[l/2]\in A_L$.  

\begin{lemma}\label{ratio estimate AI}
We have $\sum_{{\SAI}}|{\OAK}|/|{\OAL}| \leq 9$. 
\end{lemma}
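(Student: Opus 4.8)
The plan is to estimate the sum $\sum_{\SAI}|\OAK|/|\OAL|$ by comparing, for each class $[x]\in\SAI$, the discriminant form $A_K$ with $A_L$ via the splitting $A_L\simeq b\oplus A_K$, and then replacing $|\OAK|$ and $|\OAL|$ with the sizes of the automorphism groups of their $2$-components. First I would observe that since $A_L\simeq b\oplus A_K$ with $b=A_{\langle-2\rangle}$ the $p$-components for odd $p$ coincide, $(A_L)_p=(A_K)_p$ for $p\neq2$, so $|\OAK|/|\OAL|=|{\rm O}(A_K)_2|/|{\rm O}(A_L)_2|$. Thus the whole problem localizes at the prime $2$, and I only need to control the $2$-adic quadratic forms $(A_L)_2\simeq b\oplus(A_K)_2$. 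Note each $x\in\SAI$ has order $2$ and lives in $(A_L)_2$, and the correspondence $l\mapsto[l/2]$ identifies $\SAI$ with the stated subset.

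Next I would bound the number of elements of $\SAI$ and, simultaneously, the ratio $|{\rm O}(A_K)_2|/|{\rm O}(A_L)_2|$ for each. The key point is that removing a $\langle-2\rangle$-summand from the $2$-component changes the order of the orthogonal group by a bounded factor (a factor coming from the number of ways the discriminant form of $\langle-2\rangle$ can sit inside that of the larger form), and dually, each fixed $x$ of norm $-1/2$ is hit by a bounded $\OAL$-orbit. Concretely I expect to argue that for each class $[x]$ one has $|{\rm O}(A_K)_2|\cdot|\OAL\cdot x|\leq C\cdot|{\rm O}(A_L)_2|$ for an absolute constant, i.e. $\sum_{[x]\in\SAI}|{\rm O}(A_K)_2|/|{\rm O}(A_L)_2|\leq C$, where $\SAI/\OAL$-orbits partition $\SAI$. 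Summing over orbits and bounding the number of orbits times the orbit-wise ratio gives a universal constant; the arithmetic of small $2$-adic Jordan blocks (there are only finitely many isomorphism types of indecomposable $2$-adic forms, namely $\langle\pm1\rangle,\langle\pm2\rangle,\dots$ together with $u$ and $v$) should make $C=9$ come out.

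The main obstacle, I expect, is the bookkeeping at the prime $2$: unlike odd primes, the classification of $2$-adic quadratic forms has the usual complications (the forms $u$, $v$, parity of scale, and the non-uniqueness relations among Jordan decompositions), so identifying exactly which $x$ of norm $-1/2$ and order $2$ occur, and how $A_K$ decomposes relative to $A_L$ in each case, requires a short case analysis over the possible $2$-adic types of $(A_L)_2$. I would handle this by splitting off the $\langle-2\rangle=b$ explicitly, writing $(A_L)_2=b\oplus(A_K)_2$, and noting that $x$ must have a nonzero $b$-component (since norm $-1/2$ forces odd scale contributions), which pins down $x$ up to the action of ${\rm O}((A_K)_2)$; the residual freedom is then governed by the number of order-$2$, norm-$0$ or norm-appropriate vectors in $(A_K)_2$, which is elementary to bound. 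Once the $2$-adic case list is in hand, the inequality $\leq 9$ follows by inspection, taking the worst case among the finitely many small configurations.
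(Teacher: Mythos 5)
Your first reduction matches the paper's: by orbit--stabilizer, $\sum_{\SAI}|\OAK|/|\OAL|$ equals the number of $\OAL$-orbits on $\SAI$, everything localizes at $p=2$, and the orbit of $x$ is determined by the isometry class of $x^{\perp}\cap(A_L)_2$, i.e.\ by the isometry class of a complement $A$ with $b\oplus A\simeq (A_L)_2$. But the actual content of the lemma --- that the number of such isometry classes is at most $9$ --- is exactly the step you defer to ``inspection, taking the worst case among the finitely many small configurations,'' and that route does not work: in \S\ref{sec:sum over branch div} the lattice $L$ is only assumed to contain $2U$, so $(A_L)_2$ is an essentially arbitrary finite quadratic form on a $2$-group; the indecomposable blocks $\langle\varepsilon/2^k\rangle, u_k, v_k$ occur at all scales $k$, and the number of elements of order $2$ and norm $-1/2$ is unbounded. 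What is needed is a uniform bound on the non-uniqueness of orthogonal complements of $b$ inside a $2$-adic finite quadratic form, and this is delicate precisely because Witt cancellation fails for forms on $2$-groups. Your intermediate claim that $x$ is ``pinned down up to ${\rm O}((A_K)_2)$,'' with residual freedom counted by order-$2$ vectors of $(A_K)_2$, also conflates counting elements within one orbit with counting orbits: the sum you need equals the number of orbits, which is governed by the isometry classes of the complements, not by orbit sizes.

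The paper closes this gap with the Kawauchi--Kojima invariants $\sigma_r$ ($r\geq1$), valued in $({\Z}/8{\Z})\cup\{\infty\}$, which together with the underlying abelian group classify nondegenerate quadratic forms on $2$-groups and are additive under orthogonal sum. If $b\oplus A\simeq(A_L)_2$, the abelian group $A$ is determined; since $\sigma_r(b)\neq\infty$ for $r\neq2$, additivity determines $\sigma_r(A)$ for every $r\neq2$, and only $\sigma_2(A)$ remains free, taking at most $|({\Z}/8{\Z})\cup\{\infty\}|=9$ values. That is where the constant $9$ comes from --- it is a count of possible invariant values, not an empirical worst case --- and without this (or an equivalent classification result) your argument does not yield any universal constant.
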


\begin{proof}
If we consider the action of ${\OAL}$ on ${\SAI}$, 
then ${\OAK}$ is regarded as the stabilizer subgroup for $x=[l/2]$. 
It follows that 
\begin{equation*}
\sum_{{\SAI}}|{\OAK}|/|{\OAL}| = | {\SAI}/{\OAL} | = | {\SAI}/{\rm O}(A_L)_2 |. 
\end{equation*}
By the correspondence $x\mapsto x^{\perp}\cap(A_L)_2$, 
we can identify ${\SAI}/{\rm O}(A_L)_2$ with the set of isometry classes of 
finite quadratic forms $A$ with $b\oplus A \simeq (A_L)_2$. 
To estimate its cardinality, 
we use Kawauchi-Kojima's (\cite{K-K}) invariants $\sigma_r$ 
of nondegenerate quadratic forms on $2$-groups\footnote{
Here we identify, as in \cite{Wa} Theorem 5, 
quadratic forms and symmetric bilinear forms with no direct summand of order $2$.},  
which are defined for each $r\geq1$. 
They take values in the semigroup $({\Z}/8{\Z})\cup\{\infty\}$, and have the properties that  
(i) two such forms $A$, $A'$ are isometric if and only if 
$A\simeq A'$ as abelian groups and $\sigma_r(A)=\sigma_r(A')$ for every $r\geq1$, 
and that (ii) $\sigma_r(A\oplus A')=\sigma_r(A)+\sigma_r(A')$. 
Now if $b\oplus A\simeq (A_L)_2$, 
the structure of $A$ as an abelian group is uniquely determined. 
Since $\sigma_r(b)\ne\infty$ for $r\ne2$, 
the value of $\sigma_r(A)$ is uniquely determined except $r=2$. 
The value of $\sigma_2(A)$ is taken from $({\Z}/8{\Z})\cup\{\infty\}$, 
so that we have $|{\SAI}/{\rm O}(A_L)_2|\leq9$. 
\end{proof}

\subsection{Type AII}\label{ssec:AII} 

Let $l\in L$ be a vector with $(l, l)=-2$ and ${\divi}(l)=1$, which is always stably reflective. 
Since $(l, l)=-2{\divi}(l)$, $L$ is an overlattice of ${\Z}l\oplus K\simeq \langle-2\rangle\oplus K$ of index $2$. 
Let us denote $a=A_{\langle2\rangle}$. 

\begin{lemma}\label{A_K AII}
We have $A_K\simeq a\oplus A_L$. 
\end{lemma}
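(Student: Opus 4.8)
The plan is to analyze the index-$2$ overlattice $L \supset \Z l \oplus K$ directly via Nikulin's gluing theory (Lemma~\ref{Nikulin gluing}), exploiting that $\divi(l)=1$ and $(l,l)=-2$. Since $l$ has $\divi(l)=1$, the class $[l]$ is trivial in $A_L$, so $l$ is \emph{primitive}-but-non-splitting only through $K$; the gluing element $\lambda \in A_K$ furnished by Lemma~\ref{Nikulin gluing} has order $2$ and norm $-(l,l)/4 = 1/2$ mod $2\Z$, and the glue vector in $A_{\Z l \oplus K}$ is $\lambda' = [l/2] + \lambda$, where $[l/2]$ generates $A_{\Z l} = A_{\langle -2 \rangle} \cong a$ (note $A_{\langle -2\rangle} \cong A_{\langle 2 \rangle}$ as it is $\Z/2$ with the relevant norm; I will just write $a$ for this order-$2$ form with generator of norm $\pm 1/2$). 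Then Lemma~\ref{Nikulin gluing} identifies $A_L$ with the form induced on $(\lambda')^\perp / \lambda'$ inside $A_{\langle -2\rangle} \oplus A_K$.

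First I would observe that since $\lambda$ has order $2$ and norm $1/2 \bmod 2\Z$, the sublattice it generates, $\langle \lambda \rangle \subset A_K$, is isometric to $a = A_{\langle 2 \rangle}$ and is non-degenerate (its norm is a unit mod $2\Z$ at the prime $2$), so $A_K$ splits orthogonally as $A_K \cong \langle \lambda \rangle \oplus \lambda^\perp \cong a \oplus (\lambda^\perp)$. Next I would compute $(\lambda')^\perp$ inside $A_{\langle -2 \rangle} \oplus A_K = \langle [l/2] \rangle \oplus \langle \lambda \rangle \oplus \lambda^\perp$: since $([l/2],[l/2]) = -1/2$ and $(\lambda,\lambda) = 1/2$ are both odd multiples of $1/2$, while $\lambda'=[l/2]+\lambda$ has norm $0 \bmod 2\Z$, one checks that $(\lambda')^\perp = \langle [l/2]+\lambda \rangle \oplus \lambda^\perp = \langle \lambda' \rangle \oplus \lambda^\perp$, so that $(\lambda')^\perp/\lambda' \cong \lambda^\perp$. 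Combining, $A_L \cong \lambda^\perp$, hence $A_K \cong a \oplus \lambda^\perp \cong a \oplus A_L$, which is the claim.

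The main obstacle is the bookkeeping of the orthogonal decompositions at the prime $2$: one must verify that $\langle \lambda \rangle$ really is an orthogonal direct summand of $A_K$ (equivalently that $\lambda$ is not contained in $2A_K$ and pairs unimodularly with itself), and that $\langle \lambda' \rangle$ is an orthogonal direct summand of $(\lambda')^\perp$ inside $A_{\langle -2\rangle}\oplus A_K$ with complement exactly $\lambda^\perp$ — both hinge on the norm $\pm 1/2$ being a $2$-adic unit times a half-integer, so that the relevant $2\times 2$ Gram matrices are non-degenerate over $\Z_2$. Alternatively, and perhaps more cleanly, I would phrase the whole computation lattice-theoretically: take $L = 2U \oplus M$ as in Lemma~\ref{Eichler supple}, realize $l$ inside $U \oplus M$ (possible since $\divi(l)=1$, $(l,l)=-2$, so $l$ may be taken as a primitive norm $-2$ vector in the first copy of $U$, namely $l = e - f$), compute $K = l^\perp \cap L$ explicitly as $\langle 2 \rangle \oplus U \oplus M$ (the orthogonal complement of $e-f$ in $U$ is $\Z(e+f) \cong \langle 2 \rangle$), and read off $A_K \cong A_{\langle 2 \rangle} \oplus A_M = a \oplus A_L$; by the Eichler criterion (Proposition~\ref{Eichler criterion}) every type~AII vector is ${\Ost}(L)$-equivalent to this one, so the discriminant form $A_K$ is independent of the choice, completing the proof.
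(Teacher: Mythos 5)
Your primary argument is correct and is essentially the paper's own proof: the gluing element $\lambda$ has norm $-(l,l)/4\equiv 1/2$, so $\langle\lambda\rangle\simeq a$ is nondegenerate and splits off $A_K$ orthogonally, and the last assertion of Lemma~\ref{Nikulin gluing} identifies the complement with $A_L$ (you merely spell out the computation of $(\lambda')^{\perp}/\lambda'$ that the paper leaves implicit). One cosmetic slip: $A_{\langle -2\rangle}\not\cong A_{\langle 2\rangle}$ as quadratic forms (the generators have norms $-1/2$ and $+1/2$ mod $2\Z$ respectively), but this does not affect your computation, since you use the correct norms $-1/2$ for $[l/2]$ and $+1/2$ for $\lambda$, and it is $\langle\lambda\rangle$, of norm $+1/2$, that ends up as the summand $a$ of $A_K$; your alternative explicit realization $l=e-f$, $K=\langle 2\rangle\oplus U\oplus M$ combined with the Eichler criterion is also a valid (and arguably cleaner) route.
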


\begin{proof}
Let $\lambda\in A_K$ be the gluing element as in Lemma \ref{Nikulin gluing}. 
Since $(\lambda, \lambda)\equiv 1/2$, 
$\langle \lambda \rangle$ is isometric to $a$ and in particular nondegenerate. 
Thus we have the orthogonal splitting $A_K=\langle\lambda\rangle\oplus A$ where $A=\lambda^{\perp}\cap A_K$. 
By the last assertion of Lemma \ref{Nikulin gluing} we see that $A\simeq A_L$. 
\end{proof}

The Eichler criterion tells that there is only one ${\Ost}(L)$-equivalence class of vectors $l\in L$ with 
$(l, l)=-2$ and ${\divi}(l)=1$. 
Then  

\begin{lemma}\label{ratio estimate AII}
We have $|{\OAK}|/|{\OAL}|\leq 2^{n-2}$. 
\end{lemma}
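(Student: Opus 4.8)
The plan is to estimate $|{\OAK}|$ from above and $|{\OAL}|$ from below, using the structural relation $A_K \simeq a \oplus A_L$ from Lemma \ref{A_K AII}. First I would observe that since $a = A_{\langle 2 \rangle}$ is the order-$2$ discriminant form of $\langle 2 \rangle$, the orthogonal group ${\rm O}(a)$ is trivial, so the canonical decomposition ${\rm O}(A_K) = \prod_p {\rm O}(A_K)_p$ together with $A_K \simeq a \oplus A_L$ gives a natural inclusion ${\rm O}(A_L) \hookrightarrow {\rm O}(A_K)$ (acting as the identity on the $a$-summand), and the quotient ${\rm O}(A_K)/{\rm O}(A_L)$ is controlled by how ${\rm O}(A_K)_2$ can move the distinguished order-$2$ isotropic-type element $\lambda$ (equivalently, by the orbit of the $a$-summand inside $(A_K)_2$). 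Thus $|{\OAK}|/|{\OAL}|$ is bounded by the size of this orbit, which is at most the number of elements of $(A_K)_2$ of order $2$ and norm $\equiv 1/2 \bmod 2{\Z}$ — a quantity bounded by $|(A_K)_2[2]| - 1 < |(A_K)_2[2]|$.

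The second step is to bound $|(A_K)_2[2]|$, the number of $2$-torsion elements of $A_K$. Since $K$ has signature $(1, n-1)$, its discriminant group $A_K$ has length at most $n$, so the $2$-torsion subgroup has order at most $2^n$; this already gives a bound of the rough shape $2^n$. To sharpen it to $2^{n-2}$ I would use the presence of $U$ in $K$ (Lemma \ref{K contains U}): writing $K = U \oplus K'$ with $K'$ of signature $(0, n-2)$, we have $A_K \simeq A_{K'}$, so $A_K$ has length at most $n-2$ and hence $|(A_K)_2[2]| \leq 2^{n-2}$. Combining, $|{\OAK}|/|{\OAL}| \leq |(A_K)_2[2]| \leq 2^{n-2}$, which is the claimed bound. (One should double-check the edge case where $(A_K)_2$ is trivial or small, where the ratio is just $1 \leq 2^{n-2}$ for $n \geq 2$.)

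An alternative, perhaps cleaner route avoids speaking of orbits directly: note $\sum_{{\SAII}} |{\OAK}|/|{\OAL}|$ where ${\SAII}$ is a single class (the Eichler criterion gives exactly one ${\Ost}(L)$-class of such $l$), and as in the proof of Lemma \ref{ratio estimate AI} one has $|{\OAK}|/|{\OAL}| = |\mathcal{S}/{\rm O}(A_K)_2|$-type bookkeeping — but here it is simplest to just invoke that ${\OAK}$ sits inside ${\OAL}$ times the stabilizer data. I expect the main obstacle to be making the inclusion ${\rm O}(A_L) \hookrightarrow {\rm O}(A_K)$ and the quotient bound precise: one must be careful that an isometry of $A_K$ fixing the orthogonal splitting $\langle \lambda \rangle \oplus A$ genuinely restricts to an isometry of $A \simeq A_L$, and that conversely every isometry of $A_L$ extends, so that the index $[{\rm O}(A_K) : {\rm O}(A_L)]$ equals the number of images of $\lambda$ under ${\rm O}(A_K)$ that lie in admissible position — this is where Lemma \ref{Nikulin gluing} and the nondegeneracy of $\langle \lambda \rangle$ do the real work, and the counting of candidate images of $\lambda$ is what produces the factor $2^{n-2}$.
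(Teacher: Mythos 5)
Your overall strategy is the same as the paper's: use $A_K \simeq a \oplus A_L$ to realize ${\OAL}$ as the stabilizer of the gluing element $\lambda$ inside ${\OAK}$, so that $|{\OAK}|/|{\OAL}| = \#({\OAK}\cdot\lambda)$, and then bound the orbit by the number of elements $x \in A_K$ with $2x=0$ and $(x,x)\equiv 1/2$. That part is fine. The problem is in the final count, where your bookkeeping is off by one and you discard the norm condition too early.

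Concretely: $L$ has signature $(2,n)$ and $(l,l)=-2<0$, so $K = l^{\perp}\cap L$ has signature $(2,n-1)$ and rank $n+1$, not signature $(1,n-1)$. Writing $K = U \oplus K'$ gives $\operatorname{rk} K' = n-1$, so $A_K \simeq A_{K'}$ has length at most $n-1$, not $n-2$; equivalently, $A_K \simeq a \oplus A_L$ with $A_L = A_M$ of length $\leq n-2$ forces only $l((A_K)_2) \leq n-1$ (and this is attained, e.g.\ for $M = \langle -2\rangle^{n-2}$, where $|(A_K)[2]| = 2^{n-1}$). With the correct length your estimate "orbit $\leq |(A_K)_2[2]| - 1$" only yields $2^{n-1}-1$, which is weaker than the claimed $2^{n-2}$. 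The missing idea is that the norm condition cuts the $2$-torsion group in half, not merely by one element: since every $2$-torsion $x$ has $(x,x) \in \tfrac12\Z/2\Z$ and $(x+\lambda, x+\lambda) = (x,x) + \tfrac12 + 2(x,\lambda)$ differs from $(x,x)$ by $1/2$ or $3/2$, the fixed-point-free involution $x \mapsto x+\lambda$ maps the set $\{x : 2x=0,\ (x,x)\equiv 1/2\}$ off itself, so this set has at most $2^{l((A_K)_2)-1} \leq 2^{n-2}$ elements. This is exactly the bound $2^{l((A_K)_2)-1}$ used in the paper, combined with $l((A_K)_2) = l((A_L)_2)+1 \leq n-1$.
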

 
\begin{proof}
By Lemma \ref{A_K AII} we can view ${\OAL}$ as a subgroup of ${\OAK}$, 
namely the stabilizer group of the gluing element $\lambda$. 
Hence $|{\OAK}|/|{\OAL}|$ is equal to $\#({\OAK}\cdot \lambda)$,  
which in turn is dominated by   
\begin{equation*}
\# \{ x\in A_K \: | \: 2x=0, \: (x, x)\equiv 1/2 \} \leq  2^{l((A_K)_2)-1}. 
\end{equation*} 
Then we have 
\begin{equation}\label{eqn:overestimate 5}
l((A_K)_2) = l((A_L)_2)+1 \leq n-1. 
\end{equation}
\end{proof}

\subsection{Type BI}\label{ssec:BI} 

Let $D$ be an even number with $D\geq4$ and suppose that $A_L\simeq {\Z}/D\oplus({\Z}/2)^m$. 
Let $l\in L$ be a primitive vector with $(l, l)=-D$ and ${\divi}(l)=D$. 
Then we have the splitting 
\begin{equation*}
L = {\Z}l \oplus K \simeq \langle -D \rangle \oplus K, 
\end{equation*}
and hence 
$A_L \simeq \langle [l/D] \rangle \oplus A_K$. 
This implies that 
$A_K \simeq ({\Z}/2)^m$. 
In particular, $l$ is always stably reflective (see also \cite{G-H-S1} Proposition 3.2 (iii)). 

The set of ${\Ost}(L)$-equivalence classes of primitive vectors $l\in L$ with $(l, l)=-D$ and ${\divi}(l)=D$ 
can be identified with  
\begin{equation*}
{\SBI} = \{ x\in A_L \: | \: {\ord}(x)=D, (x, x)\equiv -1/D \}, 
\end{equation*} 
by associating $l\mapsto[l/D]\in A_L$.  
We factor $D$ as $D=2^{\nu}\cdot D_{\ne2}$ with $D_{\ne2}$ odd. 

\begin{lemma}\label{ratio estimate BI}
We have  
\begin{equation*} 
\sum_{{\SBI}}|{\OAK}|/|{\OAL}| \leq 
\begin{cases}
 9,  & \nu=1, \\
 1,  & \nu>1. 
\end{cases}
\end{equation*}
\end{lemma}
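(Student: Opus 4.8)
The proof will parallel the Type AI case (Lemma~\ref{ratio estimate AI}) very closely. As in that argument, let $\mathrm{O}(A_L)$ act on the finite set ${\SBI}$; since $A_K$ is identified with the stabilizer of $x=[l/D]\in A_L$ via the splitting $A_L\simeq\langle[l/D]\rangle\oplus A_K$, one gets
\begin{equation*}
\sum_{{\SBI}}|{\OAK}|/|{\OAL}| = |{\SBI}/{\OAL}|.
\end{equation*}
So everything reduces to counting orbits, i.e.\ counting isometry classes of finite quadratic forms $A$ with $\langle x\rangle\oplus A\simeq A_L$, where $\langle x\rangle$ is the cyclic form of order $D$ and norm $-1/D$. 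Because $\mathrm{O}(A_L)=\prod_p\mathrm{O}(A_L)_p$ decomposes over primes, I would count the orbit on each $p$-component separately and multiply; the only $p$-components that receive a nontrivial constraint are those dividing $D$.

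\textbf{Odd part.} For an odd prime $p\mid D_{\ne2}$, the $p$-component of $A_L$ is $\Z/p^{\nu_p}$ (using $A_L\simeq\Z/D\oplus(\Z/2)^m$, so the odd part is cyclic), and $x$ must have order $p^{\nu_p}$, hence generate it; thus $A=0$ on that component and there is exactly one orbit there. Therefore the odd primes contribute a factor $1$, and the whole sum equals the number of orbits on the $2$-component.

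\textbf{Even part; the case split.} When $\nu>1$, the $2$-part of $x$ must have order $2^\nu\ge4$, so it generates the $\Z/2^\nu$ summand of $(A_L)_2=\Z/2^\nu\oplus(\Z/2)^m$; the complement $A$ is then forced to be $(\Z/2)^m$ \emph{as an abelian group}. To pin down the isometry class I would invoke Kawauchi--Kojima's invariants $\sigma_r$ exactly as in the AI proof: additivity gives $\sigma_r(A)=\sigma_r((A_L)_2)-\sigma_r(\langle x\rangle_2)$, and since the order-$2^\nu$ cyclic form $\langle x\rangle_2$ has $\sigma_r\ne\infty$ for every $r$ when $\nu\ge2$ (it has no order-$2$ summand, so all its $\sigma_r$ are finite), \emph{all} values $\sigma_r(A)$ are determined, forcing $A$ to be unique up to isometry — hence one orbit, and the sum is $\le1$. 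When $\nu=1$, the $2$-part of $x$ has order exactly $2$, so it is a rank-one piece of $(\Z/2)^{m+1}=(A_L)_2$ and the complement $A$ has underlying group $(\Z/2)^m$; now $\langle x\rangle_2$ is an order-$2$ form, so it is precisely $\sigma_2(\langle x\rangle_2)$ that can be $\infty$, leaving $\sigma_2(A)$ the only undetermined invariant. As in AI, $\sigma_2(A)$ ranges over $(\Z/8\Z)\cup\{\infty\}$, giving at most $9$ isometry classes, i.e.\ the sum is $\le9$.

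\textbf{Main obstacle.} The delicate point is the bookkeeping with the Kawauchi--Kojima invariants when a cyclic summand of $2$-power order is split off: one must be careful about Wall's normalization (discarding order-$2$ summands as in the footnote), about whether $\langle x\rangle_2$ has norm making it a $\sigma_2$-relevant form, and about the fact that for $\nu\ge2$ \emph{no} $\sigma_r$ of $\langle x\rangle_2$ is $\infty$ so \emph{every} $\sigma_r(A)$ is recovered. I'd also need to note that $(A_L)_2$ having length $m+1$ (from $A_L\simeq\Z/D\oplus(\Z/2)^m$ with $D$ even) is what makes the complement genuinely $(\Z/2)^m$ and not larger. These are the same mechanics as Lemma~\ref{ratio estimate AI}, so after quoting that machinery the argument is short; the care is entirely in the $\nu=1$ versus $\nu>1$ dichotomy governing which single invariant stays free.
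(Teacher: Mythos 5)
Your proposal reproduces the paper's argument for this lemma almost step for step: the orbit--stabilizer identification $\sum_{\SBI}|{\OAK}|/|{\OAL}|=|{\SBI}/{\OAL}|$, the elimination of the odd primes via cyclicity of $(A_L)_p$ for $p>2$, and the count of $2$-adic orbits through the Kawauchi--Kojima invariants of the complement $x^{\perp}\cap(A_L)_2\simeq(\Z/2)^m$, with the $\nu=1$ versus $\nu>1$ dichotomy governed by whether the cyclic form $\langle x\rangle_2$ contributes an $\infty$ to the invariant $\sigma_2$. The structure and the conclusion are correct, and if anything you spell out the $\nu>1$ case more explicitly than the paper does.

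The one step I would not let stand as written is the parenthetical justification in the $\nu>1$ case, namely that the order-$2^{\nu}$ cyclic form has $\sigma_r\ne\infty$ for \emph{every} $r$ because it has no order-$2$ summand. That principle is not what the paper's footnote (on Wall's identification of quadratic and bilinear forms) asserts, and it is not true: with the normalization under which $\sigma_2(b)$ is the invariant of $b=\langle-1/2\rangle$ that may be $\infty$, the invariant of $\langle\alpha/2^{\nu}\rangle$ that degenerates is $\sigma_{\nu+1}$, since the relevant Gauss sum is $\sum_{x\bmod 2^{\nu}}e(\alpha x^{2}/2)=\sum_{x}(-1)^{x}=0$ for $\alpha$ odd --- the same phenomenon you invoke at $r=2$, $\nu=1$, just shifted up. Consequently $\sigma_{\nu+1}(A)$ is \emph{not} recovered by additivity. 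The repair is immediate and is exactly the route the paper takes: since the complement $A$ is $2$-elementary, its invariants $\sigma_r$ for $r\ne2$ are already forced by the group structure, so the only invariant that must be extracted by additivity is $\sigma_2(A)$, and for that one only needs $\sigma_2(\langle\alpha/2^{\nu}\rangle)\ne\infty$ for $\nu\ge2$ --- which holds, and whose failure at $\nu=1$ is what produces the bound $9$ there. With that one substitution your proof coincides with the paper's.
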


\begin{proof} 
As in the proof of Lemma \ref{ratio estimate AI}, 
we can view ${\OAK}$ as a stabilizer subgroup of ${\OAL}$ in its action on ${\SBI}$. 
Hence 
\begin{equation*}
\sum_{{\SBI}}|{\OAK}|/|{\OAL}| = |{\SBI}/{\OAL}|. 
\end{equation*}
To study ${\SBI}/{\OAL}$, we decompose $-1/D\in {\Q}/2{\Z}$ as 
\begin{equation*}
-1/D \equiv \alpha/2^{\nu} + \beta/D_{\ne2}, \quad \alpha\in{\Z}, \; \beta\in2{\Z}. 
\end{equation*}
If we set  
\begin{equation*}
\mathcal{S}_{\ne2} = \{ x\in \oplus_{p>2}(A_L)_p \: | \: {\ord}(x)=D_{\ne2}, (x, x)\equiv \beta/D_{\ne2} \}, 
\end{equation*} 
\begin{equation*}
\mathcal{S}_{2} = \{ x\in (A_L)_2 \: | \: {\ord}(x)=2^{\nu}, (x, x)\equiv \alpha/2^{\nu} \}, 
\end{equation*} 
we have the canonical decomposition 
\begin{equation*}
{\SBI} = \mathcal{S}_2 \times \mathcal{S}_{\ne2}. 
\end{equation*}
Since $(A_L)_p$ is cyclic for $p>2$, decomposing $\mathcal{S}_{\ne2}$ into $p$-parts shows that 
$\prod_{p>2}{\OAL}_p$ 
acts on $\mathcal{S}_{\ne2}$ transitively (and freely). 
Therefore 
\begin{equation*}
 {\SBI}/{\OAL} \simeq \mathcal{S}_2/{\OAL}_2.
\end{equation*}
As before, 
we can estimate $\#(\mathcal{S}_2/{\OAL}_2)$ by considering Kawauchi-Kojima's invariants $\sigma_r$ 
of $x^{\perp}\cap(A_L)_2$ for $x\in\mathcal{S}_2$. 
Since $x^{\perp}\cap(A_L)_2 \simeq ({\Z}/2)^m$, they already vanish for $r\ne2$. 
\end{proof}

\subsection{Type BII}\label{ssec:BII} 

Let $D\geq4$ be an even number and suppose that $A_L\simeq {\Z}/D\oplus({\Z}/2)^m$. 
We write $D=2^{\nu}\cdot D_{\ne2}$ with $D_{\ne2}$ odd. 
Let $l\in L$ be a primitive vector with $(l, l)=-D$ and ${\divi}(l)=D/2$. 
The lattice $L$ contains ${\Z}l\oplus K\simeq \langle-D\rangle \oplus K$ by index $2$. 
We denote by $\lambda\in A_K$ the gluing element as in Lemma \ref{Nikulin gluing}, 
which has norm $-2^{\nu-2}\cdot D_{\ne2}$ mod $2{\Z}$.

\begin{lemma}\label{A_K BII}
For $l\in L$ as above the following conditions are equivalent. 
\begin{enumerate} 
\item $l$ is stably reflective; 
\item $[2l/D]\in A_L$ is divisible by $2$ in $A_L$; 
\item $A_K\simeq ({\Z}/2)^{m+2}$.  
\end{enumerate} 
\end{lemma}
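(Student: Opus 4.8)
The plan is to prove the three equivalences by going around the cycle (1)$\Rightarrow$(2)$\Rightarrow$(3)$\Rightarrow$(1), exploiting the gluing description of $A_K$ from Lemma \ref{Nikulin gluing} together with the structure $A_L\simeq{\Z}/D\oplus({\Z}/2)^m$ that is assumed in case (B). First I would unravel what stable reflectivity means here: since $(l,l)=-D$ and ${\divi}(l)=D/2$, the vector $l^{\ast}=l/{\divi}(l)=2l/D$ lies in $L^{\vee}$, and $\sigma_l$ acts on $A_L$ by $x\mapsto x-\frac{2(x,l^{\ast})}{(l^{\ast},l^{\ast})}[l^{\ast}]$ up to the appropriate scaling. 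Because $\sigma_l\equiv-{\rm id}$ on $A_L$ cannot happen for type BII in general, the relevant condition ``$\sigma_l$ preserves $L$ and acts as $\pm{\rm id}$ on $A_L$'' translates, after a direct computation with the pairing $(x,[l^{\ast}])$ mod ${\Z}$, into the statement that $[l^{\ast}]=[2l/D]$ is twice some element of $A_L$; this gives (1)$\Leftrightarrow$(2). I expect this to be the part requiring the most care, since one has to track the exact congruences mod $2{\Z}$ and mod ${\Z}$ and use that ${\divi}(l)=D/2$ forces $[l^{\ast}]$ to have order exactly $2$ in $A_L$.

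Next, for (2)$\Rightarrow$(3) I would compute $A_K$ explicitly. By Lemma \ref{Nikulin gluing}, $A_L$ is the quadratic form induced on $(\lambda')^{\perp}/\lambda'$ inside $A_{{\Z}l\oplus K}=A_{\langle -D\rangle}\oplus A_K$, where $\lambda'=[l/D]+\lambda$ and $\lambda\in A_K$ is the gluing element of order $2$ with $(\lambda,\lambda)\equiv-2^{\nu-2}D_{\ne2}$. Running this description in reverse: knowing that $A_L\simeq{\Z}/D\oplus({\Z}/2)^m$ and that $[l^{\ast}]$ is divisible by $2$ in $A_L$ pins down how $\langle\lambda\rangle$ must sit inside $A_K$ and forces the $2$-adic structure of $A_K$ — one finds $A_K$ has the same odd part as $A_L$ (which is trivial beyond what $({\Z}/2)^m$ and ${\Z}/D$ contribute, i.e.\ the odd part of ${\Z}/D$) but I should be careful: actually since $K=l^{\perp}\cap L$ and $L=2U\oplus M$, one should check the odd part of $A_K$ matches, and the claim $A_K\simeq({\Z}/2)^{m+2}$ is consistent only when $D_{\ne2}=1$, so part of the work is noting that condition (2) (or equivalently the order-considerations) forces $D$ to be a power of $2$ — or else BII with that divisibility simply cannot occur, and the lemma is read accordingly. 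I would state this carefully, deducing the group structure from $|A_K|=|A_L|\cdot|A_{\langle-D\rangle}|/4 = D\cdot 2^m\cdot D/4$ combined with the exponent bound coming from divisibility of $[l^{\ast}]$.

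For (3)$\Rightarrow$(1), assuming $A_K\simeq({\Z}/2)^{m+2}$ I would verify directly that $\sigma_l$ preserves $L$ and acts as $\pm{\rm id}$ on $A_L$: the hypothesis $(l,l)=-2{\divi}(l)$ already guarantees $\sigma_l\in{\Or}(L)$ by the remark before Proposition \ref{classify stab ref vect}, so only the action on $A_L$ needs checking, and this again comes down to $[l^{\ast}]$ being $2$-divisible, which I would extract from the fact that $A_K$ is $2$-elementary and the gluing relation links $[l/D]$ to $\lambda\in A_K$. The main obstacle will be the bookkeeping in the (1)$\Leftrightarrow$(2) step — correctly identifying the action of $\sigma_l$ on $A_L$ in terms of $[l^{\ast}]$ and its norm $(l^{\ast},l^{\ast})=(l,l)/{\divi}(l)^2 = -4/D$, and seeing that ``$\pm{\rm id}$ on $A_L$'' is equivalent to $2[l^{\ast}]=0$ together with a single sign/norm congruence — but once that normalization is set up, the rest is a finite computation with the gluing element, entirely parallel to the type AII analysis in \S\ref{ssec:AII}.
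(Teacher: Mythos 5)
Your overall architecture (the cycle $(1)\Rightarrow(2)\Rightarrow(3)\Rightarrow(1)$ built on the gluing description of Lemma \ref{Nikulin gluing}) matches the paper, and your $(1)\Rightarrow(2)$ idea is fine: picking $x_0\in L^{\vee}$ with $(x_0,l)=1$ and using $\sigma_l(x_0)+x_0=2x_0+l^{\ast}\in L$ does give $[l^{\ast}]=-2[x_0]$. But there is a genuine error in your middle step. You claim that $(2)$ forces $D$ to be a power of $2$, ``or else BII with that divisibility simply cannot occur,'' on the grounds that $A_K\simeq({\Z}/2)^{m+2}$ is inconsistent with $D_{\ne2}>1$. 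This is false: the odd part of $A_L\simeq{\Z}/D\oplus({\Z}/2)^m$ is carried entirely by the summand $A_{{\Z}l}\simeq{\Z}/D$ (indeed $L\otimes\Zp={\Z}_pl\oplus(K\otimes\Zp)$ with $K\otimes\Zp$ unimodular for $p>2$), so $A_K$ is always a $2$-group. The source of the confusion is your order formula: you wrote $|A_K|=|A_L|\cdot|A_{\langle-D\rangle}|/4$, but the index-$2$ overlattice relation gives $|A_{{\Z}l}|\cdot|A_K|=4|A_L|$, i.e.\ $|A_K|=4|A_L|/D=2^{m+2}$, for every $D$. Type BII with $D_{\ne2}>1$ is a perfectly good case and the lemma covers it.

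Even after correcting the order count, your $(2)\Rightarrow(3)$ is not yet a proof: knowing $|A_K|=2^{m+2}$ does not show $A_K$ is $2$-\emph{elementary}, which is the whole content of $(3)$, and ``an exponent bound coming from divisibility of $[l^{\ast}]$'' is not substantiated. The paper's argument here is concrete: from $(2)$ one produces $y=[l/D]+\mu$ with $2y=[2l/D]$ and $(y,\lambda')=0$, deduces $2\mu=0$ and $(\mu,\lambda)\equiv1/2$, splits off the nondegenerate subgroup $\langle\lambda,\mu\rangle\simeq({\Z}/2)^2$ from $A_K$, and identifies its orthogonal complement with $({\Z}/2)^m$ because Lemma \ref{Nikulin gluing} gives $A_L\simeq\langle[l/D]+\mu\rangle\oplus A$. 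You need some version of this. Finally, for $(3)\Rightarrow(1)$ the clean route is not to ``extract $2$-divisibility of $[l^{\ast}]$'' (that would send you back to $(2)$, for which you have no direct implication to $(1)$), but simply to note that $\sigma_l$ acts on $A_{{\Z}l\oplus K}$ as $(-1,+1)$, which equals $-{\rm id}$ once $A_K$ is $2$-elementary, and hence descends to $-{\rm id}$ on $A_L$. Your garbled remark that ``$\sigma_l\equiv-{\rm id}$ cannot happen for type BII in general'' should also be struck: $\sigma_l\equiv-{\rm id}$ on $A_L$ is precisely what stable reflectivity means in case (B).
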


\begin{proof}
It is easy to see that these conditions always hold when $\nu=1$ (cf.~\cite{G-H-S1} Proposition 3.2 (iii)). 
In this case we have 
\begin{equation}\label{eqn:AK BII nu1}
A_K\simeq \langle \lambda \rangle \oplus (A_L)_2 
\end{equation}
as in Lemma \ref{A_K AII}. 
Below we let $\nu>1$, where $\lambda$ has norm $\in{\Z}$.

$(1) \Rightarrow (2)$: 
Since $A_K$ is nondegenerate, we can find an element $\mu\in A_K$ with $(\lambda, \mu)\equiv1/2$ mod ${\Z}$. 
Then $(l/D+\mu, l/2+\lambda)\equiv 0$ mod ${\Z}$, 
so that the element $[l/D]+\mu$ of $A_{{\Z}l\oplus K}$ gives that of $A_L$ by Lemma \ref{Nikulin gluing}. 
It suffices to show that $\mu$ has order $2$. 
The reflection $\sigma_l$ maps $[l/D]+\mu$ to $-[l/D]+\mu$. 
On the other hand, since $\sigma_l$ should act on $A_L$ by $-1$, we have 
\begin{equation*}
-[l/D]+\mu \equiv -[l/D]-\mu \; \in A_L. 
\end{equation*}
This shows that $2\mu=0$ in $A_K$. 

$(2) \Rightarrow (3)$: 
By assumption we have an element $y$ of $A_{{\Z}l\oplus K}$ with 
$(y, l/2+\lambda)=0$ and $2y=[2l/D]$ in $A_{{\Z}l\oplus K}$. 
(The possibility $2y=[2l/D]+[l/2]+\lambda$ can be excluded.) 
We may assume that $y$ is written as $[l/D]+\mu$ with $\mu\in A_K$. 
Then $2\mu=0$ and $(\mu, \lambda)=1/2$. 
Hence $\langle \lambda, \mu \rangle \simeq ({\Z}/2)^2$ is nondegenerate  
and we have the splitting $A_K=\langle \lambda, \mu \rangle\oplus A$ where $A=\langle \lambda, \mu \rangle^{\perp}$. 
Lemma \ref{Nikulin gluing} tells that $A_L\simeq \langle [l/D] + \mu \rangle \oplus A$. 
Hence $A\simeq({\Z}/2)^m$. 

$(3) \Rightarrow (1)$: 
Since $\sigma_l$ acts on $A_{{\Z}l\oplus K}$ by $(-1, 1)=(-1, -1)$, 
it acts on $A_L$ also by $-1$. 
\end{proof}

By this lemma, the set of ${\Ost}(L)$-equivalence classes of 
stably reflective vectors $l$ with $(l, l)=-D$ and ${\divi}(l)=D/2$ can be identified with the set 
\begin{equation*}
{\SBII} = \{ x\in A_L \: | \: {\ord}(x)=D/2, \; (x, x)\equiv -4/D, \; \textrm{divisible by} \: 2 \}, 
\end{equation*} 
by associating $l\mapsto[2l/D]\in A_L$. 

\begin{lemma}\label{estimate ratio BII}
We have 
\begin{equation*} 
\sum_{{\SBII}}|{\OAK}|/|{\OAL}| \leq 
\begin{cases}
2^{m+1},   & \nu=1, \\
2^{m+8},  & \nu=2, \\ 
2^{m+7},  & \nu>2. 
\end{cases}
\end{equation*}
\end{lemma}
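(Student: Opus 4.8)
The plan is to estimate the sum $\sum_{\SBII}|\OAK|/|\OAL|$ by the same strategy used for types AI and BI: realize $\OAK$ as a stabilizer subgroup of $\OAL$ acting on the finite set $\SBII$, so that the sum equals $|\SBII/\OAL|$, and then bound the number of orbits. However, since the gluing element $\lambda$ sits inside $A_K$ rather than $A_L$, the relation between $\OAK$ and $\OAL$ is not a direct inclusion; I would instead compare $\OAL$ with the stabilizer of $x=[2l/D]\in A_L$ inside $\OAL$, noting that this stabilizer surjects onto (or is closely related to) $\OAK$ via the identification of $A_L$ with $x^{\perp}/x$-type constructions coming from Lemma \ref{A_K BII}. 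The cleanest formulation: $\sum_{\SBII}|\OAK|/|\OAL| \le |\SBII/\OAL|$, possibly off by a bounded factor accounting for the discrepancy between $\OAK$ and the point-stabilizer, which I would absorb into the constant.

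First I would split $\SBII$ into its $2$-part and prime-to-$2$ part, exactly as in Lemma \ref{ratio estimate BI}: write $D=2^{\nu}D_{\ne2}$, decompose the congruence condition $(x,x)\equiv -4/D$ accordingly, and observe that the element $x$ must lie in $\oplus_{p\ge2}(A_L)_p$ with prescribed orders. Since $(A_L)_p$ is cyclic for $p>2$, the factor $\prod_{p>2}\OAL_p$ acts transitively on the prime-to-$2$ component, so $\SBII/\OAL$ injects into $\mathcal{S}_2/\OAL_2$ where $\mathcal{S}_2$ is the set of $x\in(A_L)_2$ of order $D/2$, of the right norm, and divisible by $2$. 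Then I would bound $\#(\mathcal{S}_2/\OAL_2)$ by associating to each orbit the isometry class of $x^{\perp}\cap(A_L)_2$ together with enough discrete data (the coset of $x$ itself) to reconstruct the orbit, and applying the Kawauchi–Kojima invariants $\sigma_r$ as in Lemmas \ref{ratio estimate AI} and \ref{ratio estimate BI}. The three cases $\nu=1$, $\nu=2$, $\nu>2$ arise because the structure of the complement $A_K$ changes: when $\nu=1$ one has $A_K\simeq\langle\lambda\rangle\oplus(A_L)_2$ by \eqref{eqn:AK BII nu1}, whereas for $\nu>1$ one has $A_K\simeq(\Z/2)^{m+2}$ by Lemma \ref{A_K BII}, so that the residual $\sigma_2$-ambiguity (the only invariant not forced to vanish) contributes the factor $9=|(\Z/8\Z)\cup\{\infty\}|$ — which explains the jump from $2^{m+1}$ to $2^{m+8}$ (roughly $9\cdot2^{m}$, bounded by $2^{m+4}$... more precisely the bound $2^{m+8}$ allows slack) and the slight drop for $\nu>2$ reflecting a divisibility constraint killing one more binary summand.

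I would track the $\nu=1$ case separately and most carefully, since there $x=[2l/D]$ has order $D/2=2^{\nu-1}D_{\ne2}=D_{\ne2}$, hence lies entirely in the prime-to-$2$ part, $\mathcal{S}_2$ is a single point, and the only freedom is in the prime-to-$2$ cyclic components where $\OAL$ acts freely and transitively on the order-exactly-$D_{\ne2}$ norm-$(-4/D)$ vectors; the residual count over $\SBII$ then collapses, but the ratio $|\OAK|/|\OAL|$ itself is large because $A_K\simeq\langle\lambda\rangle\oplus(A_L)_2$ has a big orthogonal group — this is where the $2^{m+1}$ comes from, via $|\OAK|/|\OAL|\le\#(\OAK\cdot\lambda)\le\#\{y\in A_K:2y=0\}$-type estimates as in Lemma \ref{ratio estimate AII}. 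The main obstacle will be the bookkeeping in the $\nu>1$ cases: one must simultaneously control (i) the number of orbits of $\OAL_2$ on the relevant subset of $(A_L)_2$, using that $x^{\perp}\cap(A_L)_2$ is elementary abelian $2$-group so $\sigma_r$ vanishes for $r\ne2$ leaving only the $9$-fold $\sigma_2$-ambiguity, and (ii) the size of the point-stabilizer versus $\OAK$, i.e., whether passing from "stabilizer of $x$ in $\OAL$" to "$\OAK$" loses or gains a controlled power of $2$ — this requires carefully invoking Lemma \ref{A_K BII}(3) and the gluing description, and is the step where an off-by-a-bounded-factor error would hide. I expect the final bounds to follow by combining $|\SBII/\OAL|\le 9\cdot[\text{index bookkeeping}]$ with the elementary inequality $l((A_L)_2)\le n$ (since $L\supset 2U$ forces $A_L=A_M$ with $M$ of rank $n-2$, giving $l((A_K)_2)\le m+2\le n$), yielding the stated $2^{m+1}$, $2^{m+8}$, $2^{m+7}$ after absorbing constants.
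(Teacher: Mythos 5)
Your $\nu=1$ argument matches the paper's: the set ${\SBII}$ lies in the odd part, where $\prod_{p>2}{\rm O}(A_L)_p$ acts simply transitively, so the sum collapses to $|{\OAK}|/|{\rm O}(A_L)_2|=\#({\OAK}\cdot\lambda)\leq 2^{m+1}$ using $A_K\simeq\langle\lambda\rangle\oplus(A_L)_2$. That part is fine.

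For $\nu>1$ there is a genuine gap, in exactly the place you flag but decline to execute. First, the discrepancy between ${\OAK}$ and the stabilizer $G_x$ of $x=[2l/D]$ in ${\OAL}$ is \emph{not} a bounded constant to be absorbed: it is the dominant factor in the bound, of size up to $2^{m+6}$. The paper controls it by using the splitting from Lemma \ref{A_K BII}, $A_K=\langle\lambda,\mu\rangle\oplus A$ and $A_L=\langle[l/D]+\mu\rangle\oplus A$ with $A\simeq({\Z}/2)^m$, so that ${\rm O}(A)$ is a common subgroup of both ${\OAK}$ and $G_x$; then $|{\OAK}|/|G_x|\leq(|{\OAK}|/|{\rm O}(A)|)\cdot(|{\rm O}(A)|/|G_x|)\leq 2^{2m+3}\cdot 2^{3-m}=2^{m+6}$, where $2^{2m+3}$ counts embeddings of $\langle\lambda,\mu\rangle$ into $A_K$ and $2^{m-3}$ is a lower bound for $\#(G_x\cdot([l/D]+\mu))$. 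Your attribution of the exponent jump to the Kawauchi--Kojima factor $9$ is therefore a misdiagnosis. Second, your proposed orbit count cannot deliver the stated constants: a blanket bound $|{\SBII}/{\OAL}|\leq 9$ combined with the $2^{m+6}$ above gives $9\cdot 2^{m+6}>2^{m+8}$, overshooting the claim for $\nu=2$. Moreover, since $x$ has order $2^{\nu-1}>2$ and is divisible by $2$, the cyclic group $\langle x\rangle$ is not an orthogonal summand of $(A_L)_2$, so the isometry class of $x^{\perp}\cap(A_L)_2$ alone does not determine the orbit of $x$; the paper instead passes to an overgroup $A'\simeq{\Z}/2^{\nu}$ with $2A'=\langle x\rangle$, applies Kawauchi--Kojima to $(A')^{\perp}\simeq({\Z}/2)^m$ (where all invariants are forced), and counts the finitely many choices of $A'$ and of $x$ inside $\langle x\rangle$, arriving at $|\mathcal{S}_2/{\rm O}(A_L)_2|\leq 4$ for $\nu=2$ and $\leq 2$ for $\nu>2$. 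Both of these refinements are needed to reach $2^{m+8}$ and $2^{m+7}$.
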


\begin{proof}
We first consider the case $\nu=1$ where ${\SBII}$ is rewritten as 
\begin{equation*}
{\SBII} = \{ x\in \oplus_{p>2}(A_L)_p \: | \: {\ord}(x)=D_{\ne2}, \; (x, x)\equiv -2/D_{\ne2} \}. 
\end{equation*}
Since $(A_L)_p$ is cyclic for $p>2$, we can show that $\#{\SBII}$ is equal to $\prod_{p>2}|{\rm O}(A_L)_p|$ as before. 
By the relation \eqref{eqn:AK BII nu1} the isometry class of $A_K$ is determined independently of $x\in{\SBII}$, 
and we may view ${\OAL}_2$ as the stabilizer group of $\lambda$ in ${\OAK}$. 
Therefore 
\begin{equation*} 
\sum_{{\SBII}}|{\OAK}|/|{\OAL}| = |{\OAK}|/|{\OAL}_2| =  \# ({\OAK}\cdot\lambda ). 
\end{equation*}
It is then easy to estimate  
\begin{equation*}
 \# ({\OAK}\cdot\lambda)  
 \leq  \# \{ x\in A_K \: |\: (x, x)\equiv D_{\ne2}/2 \}  
 \leq  2^{m+1}. 
\end{equation*}

Next let $\nu>1$. 
For $x=[2l/D]$ we consider a decomposition $A_K=\langle \lambda, \mu \rangle\oplus A$ as in the proof of Lemma \ref{A_K BII}. 
Recall that $A_L=\langle [l/D]+\mu \rangle \oplus A$ and that $x$ is contained in $\langle [l/D]+\mu \rangle$. 
So if we let ${\OAL}$ act on ${\SBII}$, the stabilizer $G_x$ of $x$ contains ${\rm O}(A)$ with index 
\begin{eqnarray*}
\# (G_x\cdot([l/D]+\mu)) 
&   =   & \# \{ y \in A_L \: | \: 2y=x, \: (y, y)\equiv -1/D+(\mu, \mu) \} \\ 
& \geq & \# \{ y' \in A \: | \: (y', y')\equiv 0 \} \\ 
& \geq & 2^{m-3}.   
\end{eqnarray*} 
In particular, 
\begin{equation*}
\sum_{{\SBII}}|{\OAK}|/|{\OAL}| \leq \sum_{{\SBII}/{\OAL}}2^{3-m}\cdot|{\OAK}|/|{\rm O}(A)|. 
\end{equation*}
The index $|{\OAK}/{\rm O}(A)|$ is in turn bounded by the number of possible (abstract) embeddings 
$\langle \lambda, \mu \rangle \to A_K$. 
Therefore 
\begin{equation*}
2^{3-m}\cdot|{\OAK}/{\rm O}(A)| 
 \leq  2^{3-m}\cdot 2^{m+2}\cdot 2^{m+1}. 
\end{equation*}
In order to estimate $|{\SBII}/{\OAL}|$,  
we decompose $-4/D\in{\Q}/2{\Z}$ as 
$-4/D \equiv \alpha/2^{\nu-2} + \beta/D_{\ne2}$ 
with $\alpha\in{\Z}$, $\beta \in 2{\Z}$, 
and put accordingly 
\begin{equation*}
\mathcal{S}_2 = \{ x\in (A_L)_2 \: | \: {\ord}(x)=2^{\nu-1}, \; (x, x)\equiv \alpha/2^{\nu-2}, \; \textrm{divisible by} \: 2 \}. 
\end{equation*}
As in the proof of Lemma \ref{ratio estimate BI}, we obtain the reduction   
\begin{equation*}
{\SBII}/{\OAL} \simeq \mathcal{S}_2/{\OAL}_2. 
\end{equation*}
For each element of $\mathcal{S}_2/{\OAL}_2$ 
we \textit{choose} a representative $x\in\mathcal{S}_2$ 
and a subgroup $A'\subset (A_L)_2$ isomorphic to ${\Z}/2^{\nu}$ such that $2A'=\langle x \rangle$. 
The isometry class of $A'$ determines the ${\OAL}_2$-equivalence class of $\langle x\rangle$ 
because it uniquely determines the isometry class of $(A')^{\perp}\cap(A_L)_2\simeq ({\Z}/2)^m$ 
via Kawauchi-Kojima's invariants. 
Then we note the following. 
\begin{itemize}
\item Up to $\pm1$ we have at most $2$ (resp.~$1$) elements in $\langle x \rangle$ 
        of the same order and norm with $x$, in case $\nu\geq4$ (resp. $\nu\leq3$).  
\item We have at most $1$ (resp.~$2$, $4$) isometry classes of quadratic form $A'$ on ${\Z}/2^{\nu}$ such that 
         $2A'\simeq \langle x\rangle$ for $x\in\mathcal{S}_2$, in case $\nu\geq4$ (resp.~$\nu=3, 2$).  
\end{itemize}
These imply that  
$|\mathcal{S}_2/{\OAL}_2| \leq 4$ when $\nu=2$, and 
$|\mathcal{S}_2/{\OAL}_2| \leq 2$ when $\nu>2$. 
This finishes the proof of Lemma \ref{estimate ratio BII}. 
\end{proof}

\subsection{Type BIII}\label{ssec:BIII} 

Let $D\geq3$ be an odd number and suppose that $A_L\simeq{\Z}/D$. 
Note that $n$ must be even in this case, 
for $L\otimes{\Z}_2$ is an even unimodular ${\Z}_2$-lattice which should have even rank. 
A primitive vector $l\in L$ with $(l, l)=-2D$ and ${\divi}(l)=D$ 
is always stably reflective because $[l/D]$ generates $A_L$ (see also \cite{G-H-S1} Proposition 3.2 (iv)). 
Since $L$ is an index $2$ overlattice of ${\Z}l\oplus K$, 
we have $2^2\cdot|A_L|=2D\cdot|A_K|$.  
Therefore $A_K\simeq{\Z}/2$.

The set of ${\Ost}(L)$-equivalence classes of primitive vectors $l\in L$ with 
$(l, l)=-2D$ and ${\divi}(l)=D$ is identified with   
\begin{equation*}
{\SBIII} = \{ x\in A_L \: | \: {\ord}(x)=D, (x, x)\equiv -2/D \},  
\end{equation*} 
by mapping $l\mapsto[l/D]\in A_L$.  
If $\omega(D)$ is the number of prime divisors of $D$, 
both $\#{\SBIII}$ and  $|{\OAL}|$ are equal to $2^{\omega(D)}$. 
Since ${\OAK}=\{1\}$, then   

\begin{lemma}\label{estimate ratio BIII}
We have $\sum_{{\SBIII}}|{\OAK}|/|{\OAL}| =1$. 
\end{lemma}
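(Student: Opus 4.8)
The plan is to pin down all the quantities appearing in $\sum_{\SBIII}|\OAK|/|\OAL|$ from the data $A_L\simeq\Z/D$ with $D$ odd, $A_K\simeq\Z/2$, and the Eichler-criterion description of $\SBIII$. First I would record that $\OAK=\{1\}$, since $\Z/2$ carries a unique quadratic form and admits only the identity isometry; hence every summand equals $1/|\OAL|$ and the sum is just $\#\SBIII/|\OAL|$. So the lemma reduces to showing $\#\SBIII=|\OAL|$.

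Next I would compute $|\OAL|$. Since $A_L=\bigoplus_p(A_L)_p$ with each $(A_L)_p$ cyclic of prime-power order (as $D=\prod p^{a_p}$ is odd and $A_L$ is cyclic), and $\OAL=\prod_p\mathrm{O}(A_L)_p$, it suffices to note that the isometry group of a nondegenerate cyclic quadratic form on an odd $p$-group $\Z/p^a$ is $\{\pm1\}$ (the isometries are $x\mapsto\pm x$, both of which preserve the norm). Thus $|\OAL|=2^{\omega(D)}$.

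Then I would count $\#\SBIII=\#\{x\in A_L:\ord(x)=D,\ (x,x)\equiv -2/D\bmod 2\Z\}$. Decompose into $p$-parts: a generator $x$ must have each $p$-component a generator of $(A_L)_p$, and the norm condition splits via CRT into one congruence per prime. On $\Z/p^{a_p}$ the generators are $\{\pm u\}$ for a fixed generator $u$, and $x\mapsto -x$ fixes the norm, so for each prime the norm congruence is either satisfied by both generators of that component or by neither; since $x=[l/D]$ for the actual stably reflective vector $l$ realizes the norm $-2/D$ (Lemma \ref{Eichler supple} guarantees such a vector exists, so the set is nonempty), it is satisfied, and each $p$-component contributes exactly $2$ choices. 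Hence $\#\SBIII=2^{\omega(D)}=|\OAL|$, and the sum equals $1$.

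The only mild subtlety — the ``main obstacle,'' such as it is — is justifying that the norm congruence cannot be satisfied by one generator of a $p$-component but not the other (which would break the clean count): this is exactly the observation that $-1\in\mathrm{O}((A_L)_p)$, so the set of generators realizing any fixed norm is stable under $x\mapsto-x$ and therefore has even cardinality $0$ or $2$ in the cyclic odd-order case; nonemptiness of $\SBIII$ then forces it to be $2$ in every $p$-component. Everything else is bookkeeping with the Chinese Remainder Theorem and Eichler's criterion (Proposition \ref{Eichler+}).
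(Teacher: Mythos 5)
Your overall route is the same as the paper's: the paper simply asserts that $\OAK=\{1\}$ and that $\#\SBIII$ and $|\OAL|$ both equal $2^{\omega(D)}$, and you are filling in those three counts via the $p$-adic decomposition. Two of the three are fine. The problem is in the count of $\#\SBIII$: your stated justification, ``on $\Z/p^{a_p}$ the generators are $\{\pm u\}$ for a fixed generator $u$,'' is false whenever $p^{a_p}>3$ (the generators are the $\varphi(p^{a_p})$ elements prime to $p$), and the fallback in your last paragraph --- the set of generators of a given norm is stable under $x\mapsto -x$, \emph{therefore} has cardinality $0$ or $2$ --- is a non sequitur: stability under a fixed-point-free involution only gives even cardinality, not a bound of $2$. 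What is actually needed is the transitivity statement: if $x$ and $x'=cx$ are two generators of $(A_L)_p$ with $(x,x)\equiv(x',x')$, then since $(x,x)$ generates $\tfrac{2}{p^{a_p}}\Z/2\Z$ (nondegeneracy) the congruence $c^2(x,x)\equiv(x,x)$ forces $c^2\equiv 1 \bmod p^{a_p}$, hence $c\equiv\pm1$ for odd $p$. In other words ${\rm O}((A_L)_p)=\{\pm1\}$ acts freely \emph{and transitively} on the set of generators of any attained norm, so that set has exactly $0$ or $2$ elements --- this is precisely the ``transitively (and freely)'' observation the paper records in the proof of the BI case, and it is the same fact $c^2\equiv1\Rightarrow c\equiv\pm1$ that you correctly use to compute $|{\rm O}((A_L)_p)|=2$. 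With that one repair the count $\#\SBIII=2^{\omega(D)}=|\OAL|$ (granting, as both you and the paper implicitly do, that a type BIII vector exists so that $\SBIII\neq\emptyset$) goes through and the lemma follows.
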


\subsection{Summary}\label{ssec:summary table}

We have now estimated \eqref{eqn:stably reflect ratio} for each type of stably reflective vectors. 
We shall translate it to the estimate of \eqref{eqn:stab ortho ratio +-1}. 

\begin{lemma}\label{eqn:estimate after /-1}
For each type $\ast=AI,\cdots, BIII$ of stably reflective vectors, we have 
\begin{equation}\label{eqn:ratio +-1}
\sum_{\mathcal{S}(\ast)/\pm1} \frac{|{\OAK}/\pm1|}{|{\OAL}/\pm1|} = 
\delta \cdot \sum_{\mathcal{S}(\ast)} \frac{|{\OAK}|}{|{\OAL}|}, 
\end{equation}
where $\delta=2$ if $\ast=$ BII with $D=4$, and $\delta=1$ otherwise. 
\end{lemma}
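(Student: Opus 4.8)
The plan is to compare the two quotient sets $\mathcal{S}(\ast)/{\rm O}(A_L)$ and $(\mathcal{S}(\ast)/\pm1)/({\rm O}(A_L)/\pm1)$ directly, using the fact already established in each subsection that $\mathcal{S}(\ast)$ is identified with a set of elements $x\in A_L$ of prescribed order and norm, with ${\Ost}(L)$-classes corresponding to ${\rm O}(A_L)$-orbits and the stabilizer of $x$ being ${\rm O}(A_K)$. First I would rewrite the left side of \eqref{eqn:ratio +-1} as a cardinality: exactly as in the proofs of Lemmas \ref{ratio estimate AI}, \ref{ratio estimate BI}, etc., the quantity $\sum_{\mathcal{S}(\ast)/\pm1}|{\OAK}/\pm1|/|{\OAL}/\pm1|$ equals the number of $\langle{\rm O}(A_L),-1\rangle$-orbits on $\mathcal{S}(\ast)$, i.e. $|\mathcal{S}(\ast)/\langle{\rm O}(A_L),-1\rangle|$, provided $-1$ acts on the relevant set with the stabilizer of a point $x$ mapping onto ${\OAK}/\pm1$; similarly the right side is $|\mathcal{S}(\ast)/{\rm O}(A_L)|$. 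So the whole statement reduces to computing, for each type, the factor by which quotienting additionally by $-1$ collapses ${\rm O}(A_L)$-orbits — that is, whether $-1$ already lies in ${\rm O}(A_L)$ modulo the pointwise action, or equivalently whether $x$ and $-x$ are always in the same ${\rm O}(A_L)$-orbit.

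Next I would go type by type. In each case $\mathcal{S}(\ast)\subset A_L$ is the set of $x$ with $\ord(x)=d$ and $(x,x)\equiv c$ (and in type BII the extra divisibility condition), and the key observation is that $-x$ has the same order and norm as $x$, hence $-x\in\mathcal{S}(\ast)$; moreover $x^\perp\cap A_L = (-x)^\perp\cap A_L$, so $x$ and $-x$ have isometric orthogonal complements and by the Kawauchi--Kojima / Nikulin analysis already used they lie in the \emph{same} ${\rm O}(A_L)$-orbit — \emph{unless} the collapsing happens at the level of the stabilizer rather than the orbit. Concretely: if $-1\in{\rm O}(A_L)$ acts on $\mathcal{S}(\ast)$ and $-x$ is ${\rm O}(A_L)$-equivalent to $x$ for every $x$ (which holds whenever $2x\ne 0$ is not forced or when the ambient form admits $-1$), then $\langle{\rm O}(A_L),-1\rangle$ and ${\rm O}(A_L)$ have the same orbits, and also the stabilizers of $x$ in the two groups differ only by the central $\pm1$; this gives $\delta=1$. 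The exceptional case is when $x=2l/D$ has order $2$, so $x=-x$: then the two conditions are genuinely different, and one must check whether $x$ and $-x$ coinciding causes an orbit to split or merge under the additional $\pm1$. This happens precisely for type BII when $D/2$ — the order of $x$ — equals $2$, i.e. $D=4$; there $\mathcal{S}(\mathrm{BII})$ consists of elements of order $2$, $-x=x$, the $\pm1$-action on $\mathcal{S}(\mathrm{BII})$ is trivial, yet the gluing element $\lambda\in A_K$ and its negative $-\lambda$ need not coincide in $A_K$, so the passage to $/\pm1$ on $A_K$ but not on $\mathcal{S}$ introduces a factor $2$. Tracking this asymmetry carefully gives $\delta=2$ exactly when $\ast=\mathrm{BII}$ and $D=4$.

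The main obstacle I expect is the type BII bookkeeping: one has to be careful about how the central element $-1$ sits inside $\langle{\rm O}(A_L),-1\rangle$ versus how it sits inside ${\rm O}(A_K)$ after the non-split extension $L\supset{\Z}l\oplus K$, since the identification of ${\OAL}$ with a stabilizer subgroup of ${\OAK}$ (the stabilizer of the gluing element $\lambda$, as in Lemma \ref{A_K BII}) does not commute naively with quotienting by $\pm1$ on the two sides. In all other types the orthogonal splitting $A_L\simeq \langle[l/d]\rangle\oplus A_K$ (types AI, BI) or the clean description of $A_K$ (types AII, BIII) makes the $\pm1$ comparison transparent and yields $\delta=1$ immediately. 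So after recording the reduction to counting $\langle{\rm O}(A_L),-1\rangle$-orbits, I would dispatch AI, AII, BI, BIII in a line each, and spend the remaining effort on the $D=4$ subcase of BII, verifying that the $\pm1$-collapse on $A_K$ is not matched by a collapse on $\mathcal{S}(\mathrm{BII})$, which forces the doubling.
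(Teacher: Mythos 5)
Your reduction of both sides of \eqref{eqn:ratio +-1} to orbit counts is the weak point. The identity $\sum_{\mathcal{S}(\ast)}|{\OAK}|/|{\OAL}|=|\mathcal{S}(\ast)/{\OAL}|$ holds only when ${\OAK}$ is the stabilizer in ${\OAL}$ of the point $x\in\mathcal{S}(\ast)$; this is true for AI and BI but fails for AII and BII, where the containment goes the other way (${\OAL}$, resp.\ ${\rm O}(A)$, sits inside ${\OAK}$ as a stabilizer of the gluing data) and the ratio $|{\OAK}|/|{\OAL}|$ can be as large as $2^{n-2}$. Your proviso acknowledges this but is never discharged. Moreover the ``collapse of orbits'' mechanism cannot produce the answer in any case: $-{\rm id}_{A_L}$ always lies in ${\OAL}$, so $x$ and $-x$ are always in the same ${\OAL}$-orbit, and an orbit-count comparison would yield $\delta=1$ uniformly, contradicting the BII, $D=4$ exception. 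The lemma is really a term-by-term bookkeeping, not an orbit statement: each summand changes by the factor $\epsilon_L/\epsilon_K$, where $\epsilon_A\in\{1,2\}$ records whether $-{\rm id}_A={\rm id}_A$ (i.e.\ whether $A$ is $2$-elementary), and the index set shrinks by $\mu=|\mathcal{S}(\ast)|/|\mathcal{S}(\ast)/\pm1|\in\{1,2\}$ according to whether the $(-1)$-action on $\mathcal{S}(\ast)$ is trivial or free; hence $\delta=\epsilon_L/(\mu\,\epsilon_K)$. For AI and AII one has $\epsilon_L=\epsilon_K$ (since $A_L\simeq b\oplus A_K$, resp.\ $A_K\simeq a\oplus A_L$) and $\mu=1$, so $\delta=1$; for the B types $\epsilon_K=1$ and $\epsilon_L=2$ (as $D>2$), and the action is free except for BII with $D=4$, where the elements of $\mathcal{S}(\mathrm{BII})$ have order $2$, $\mu=1$, and $\delta=2$.

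Your diagnosis of the $D=4$ doubling is also wrong in substance: you attribute it to $\lambda\ne-\lambda$ in $A_K$, but in type BII $A_K\simeq({\Z}/2)^{m+2}$ is $2$-elementary, so $-\lambda=\lambda$ and $|{\OAK}/\pm1|=|{\OAK}|$ exactly. The factor $2$ comes from the other side: $|{\OAL}/\pm1|=|{\OAL}|/2$ because $A_L$ has exponent $4>2$, while neither the numerator nor the number of summands is halved. Note also that your framing cannot explain why type AI, where the $(-1)$-action on $\mathcal{S}(\mathrm{AI})$ is equally trivial, still gives $\delta=1$: the decisive difference is the matching (type A) versus mismatching (type B) of $2$-elementarity between $A_L$ and $A_K$, which your argument never isolates.
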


\begin{proof}
In case $\ast=$ AI, AII, the $(-1)$-action on $\mathcal{S}(\ast)$ is trivial. 
Moreover, 
$A_K$ is 2-elementary if and only if $A_L$ is so. 
For $\ast=$ BI, BII, BIII, the $(-1)$-action on $\mathcal{S}(\ast)$ is free unless $\ast=$ BII and $D=4$. 
The group $A_K$ is always 2-elementary, while $A_L$ is never so. 
\end{proof}

For the convenience of Proposition \ref{sum over branch div}, we substitute $m\leq n-3$ in Lemma \ref{estimate ratio BII}. 
Then we can summarize (and simplify) the results in the following Table \ref{summary table}. 

\begin{table}[h]
\caption{Invariants of stably reflective vector}\label{summary table}
\begin{center} 
\begin{tabular}{c|c|c|c|c|c}
 & $(l, l)$ & ${\divi}(l)$ & $A_L$ & $A_K$ & the number \eqref{eqn:ratio +-1} \\ \hline  
AI    &  $-2$   & $2$ & --- & $b^{\perp}\cap A_L$ & $\leq 9$  \\ \hline  
AII   &  $-2$   & $1$ & --- & $a\oplus A_L$ & $\leq 2^{n-2}$  \\ \hline  
BI    &  $-D$   & $D$ & ${\Z}/D\oplus({\Z}/2)^m$ & $({\Z}/2)^m$ & $\leq 9$  \\ \hline  
BII   &  $-D$   & $D/2$ & ${\Z}/D\oplus({\Z}/2)^m$ & $({\Z}/2)^{m+2}$ & $\leq 2^{n+6}$ \\ \hline  
BIII  & $-2D$  & $D$  (odd) & ${\Z}/D$ & ${\Z}/2$ & $=1$ 
\end{tabular}
\end{center}
\end{table}


\section{Proof of Proposition \ref{vol ratio}}\label{sec:vol ratio}

In this section we estimate the ratio of the Hirzebruch-Mumford volumes 
\begin{equation*}\label{eqn:def vol ratio}
{\HM}(L, K) := \frac{{\HMOK}}{{\HMOL}} 
\end{equation*}
for the orthogonal complements $K=l^{\perp}\cap L$ of stably reflective vectors $l\in L$.

\subsection{A volume formula}\label{ssec:HM volume}

In this subsection, we let $L$ be an even lattice of signature $(2, n)$ and containing $U$. 
An explicit formula of ${\HMOL}$ is given in \cite{G-H-S2} Theorem 3.1 in terms of the local densities of $L$. 
Following a lot of examples in \cite{G-H-S2} \S 3, the formula was further developed in \cite{Ma}. 
Below we recall the version of \cite{Ma}.

There is a lot of notation originating from the local density formula in \cite{Ki} \S 5.6. 
For $p\geq2$ we denote by  
\begin{equation*}
L\otimes{\Zp} = \bigoplus_{j} L_{p,j}(p^j) 
\end{equation*} 
a Jordan decomposition of the ${\Zp}$-lattice $L\otimes{\Zp}$  
where $L_{p,j}$ is unimodular of rank $n_{p,j}\geq0$. 
We especially abbreviate $L_p=L_{p,0}$ and $n_p=n_{p,0}$. 
Let $s_p$ be the number of indices $j$ with $L_{p,j}\ne0$, and set  
\begin{equation*}
w_p = \sum_{j} jn_{p,j}\Bigl( \frac{n_{p,j}+1}{2} + \sum_{k>j} n_{p,k} \Bigr).  
\end{equation*}
For an even unimodular ${\Zp}$-lattice $M$ of rank $r\geq0$, 
we define $\chi(M)$ by 
$\chi(M)=0$ if $r$ is odd, 
$\chi(M)=1$ if $M\simeq (r/2)U\otimes{\Zp}$, 
and $\chi(M)=-1$ otherwise. 
For a natural number $l>0$ we put  
\begin{equation*}
P_p(l) = \prod_{k=1}^{l} (1-p^{-2k}), 
\end{equation*}
and $P_p(0)=1$. 

We need further notation for $p=2$. 
Consider a decomposition $L_{2,j}=L_{2,j}^{+}\oplus L_{2,j}^{-}$ such that 
$L_{2,j}^{+}$ is even and $L_{2,j}^{-}$ is either $0$ or odd of rank $\leq2$. 
Put $n_{2,j}^{+}={\rm rk}(L_{2,j}^{+})$. 
We set $q=\sum_j q_j$, where 
$q_j=0$ if $L_{2,j}$ is even, 
$q_j=n_{2,j}$ if $L_{2,j}$ is odd and $L_{2,j+1}$ is even, 
and $q_j=n_{2,j}+1$ if both $L_{2,j}$ and $L_{2,j+1}$ are odd. 
Here zero-lattice is counted as an even lattice. 
For those $j$ with $L_{2,j}\ne0$,  
we define $E_{2,j}(L)$ by  
$E_{2,j}(L) = 1+\chi(L_{2,j}^+)2^{-n_{2,j}^{+}/2}$ if both $L_{2,j-1}$ and $L_{2,j+1}$ are even 
and $L_{2,j}^-\nsimeq\langle\epsilon_1, \epsilon_2\rangle$ with $\epsilon_1\equiv\epsilon_2$ mod $4$, 
and $E_{2,j}(L)=1$ otherwise. 
We also let $s_2'$ be the number of indices $j$ such that 
$L_{2,j}=0$ and either $L_{2,j-1}$ or $L_{2,j+1}$ is odd.

In order to state the volume formula, 
we define four finite products, $F(L)$, $G(L)$, $H(L)$ and $C(L)$. 
Firstly $F(L)$ and $G(L)$ are defined by 
\begin{equation*}\label{eqn:F-term}
F(L) = \prod_{\begin{subarray}{c} p||A_L| \\ [n_p/2]\leq [n/2] \end{subarray}} 
\prod_{k=[n_p/2]+1}^{[n/2]+1} (1-p^{-2k}), 
\end{equation*}
\begin{equation*}\label{eqn:G-term}
G(L) = \prod_{p||A_L|, p>2} (1+\chi(L_p)p^{-n_p/2}) \cdot 
\begin{cases} 
1               & |A_L|:  \text{odd}, \\ 
E_{2,0}(L) & |A_L|:  \text{even}. 
\end{cases} 
\end{equation*}
The product $H(L)$ is defined when $n$ is even as follows. 
We can factorize $(-1)^{n/2+1}{\det}(L)$ as 
\begin{equation*}\label{eqn:fundamental discriminant}
(-1)^{n/2+1}{\det}(L) = t^2D 
\end{equation*}
with $D$ a fundamental discriminant\footnote{
This $D$ appears only in \S \ref{ssec:HM volume} and \S \ref{ssec:initial estimate}. 
No confusion is likely to occur with the exponent of $A_L$ as in Proposition \ref{classify stab ref vect} case (B).  
}. 
Denote by $\chi_{D}$ the Kronecker symbol $\Bigl( \frac{D}{\cdot} \Bigr)$. 
Then we set  
\begin{equation*}\label{eqn:Hm}
H(L) = G(L) \cdot \prod_{p||A_L|}\frac{1-\chi_{D}(p)p^{-n/2-1}}{1-p^{-n-2}}. 
\end{equation*}
Finally, for each prime factor $p$ of $|A_L|$ we put 
\begin{equation*}
C_p(L) = 
\begin{cases}
2^{1-s_p}  p^{-w_p}  \prod_{j}P_p([n_{p,j}/2])^{-1}(1+\chi(L_{p,j})p^{-n_{p,j}/2})
& p>2, \\
2^{1-s_2-s_2'-w_2+q} \prod_{j}P_2(n_{2,j}^+/2)^{-1} E_{2,j}(L)
& p=2, 
\end{cases}
\end{equation*}
where $j$ ranges over indices with $j>0$ and $L_{p,j}\ne0$. 
It will be convenient to set $C_p(L)=1$ even for $p\nmid |A_L|$. 
Then we define 
\begin{equation*}\label{eqn:C-term}
C(L) = 
\begin{cases}
8 \cdot (|A_L|/4)^{(n+3)/2}\cdot \prod_{p}C_p(L) 
& \text{$n$: odd,} \\
8 \sqrt{\pi} \cdot (|A_L|/4\pi)^{(n+3)/2}\cdot \prod_{p}C_p(L) 
& \text{$n$: even.} 
\end{cases}
\end{equation*}
Loosely speaking, 
$F(L)$ encodes only the rank of the unimodular components $L_p$, 
$G(L)$ and $H(L)$ encode the isometry classes of $L_p$, 
and $C(L)$ encodes information on the non-unimodular components $L_{p,j}$, $j>0$. 

We can now state the formula of ${\HMOL}$. 

\begin{proposition}[\cite{G-H-S2}, \cite{Ma}]\label{HM formula}
Let $L$ be an even lattice of signature $(2, n)$ and containing $U$. 

\noindent
(1) When $n$ is odd, we have 
\begin{equation*}
{\HMOL} = C(L) \cdot F(L) \cdot G(L) \cdot \prod_{k=1}^{(n+1)/2}\frac{|B_{2k}|}{2k}. 
\end{equation*}

\noindent
(2) When $n$ is even, we have 
\begin{equation*}
{\HMOL} = 
C(L) \cdot F(L) \cdot H(L) \cdot 
\prod_{k=1}^{n/2}\frac{|B_{2k}|}{2k} \cdot (n/2)! \cdot L(n/2+1, \; \chi_{D}).  
\end{equation*}
Here $B_{2k}$ are the Bernoulli numbers and 
$L(s, \chi_{D})=\prod_p(1-\chi_{D}(p)p^{-s})^{-1}$ is 
the Dirichlet $L$-function for the quadratic character $\chi_D$. 
\end{proposition}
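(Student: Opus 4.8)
The plan is to obtain the formula by specializing Siegel's mass formula, in the form packaged by Gritsenko--Hulek--Sankaran. Concretely, \cite{G-H-S2} Theorem 3.1 expresses $\HMOL$ as an explicit archimedean constant times $\prod_p \alpha_p(L)^{-1}$, where $\alpha_p(L)$ is the local density of $L$ at $p$; via the Hirzebruch--Mumford proportionality the archimedean constant unwinds into a product of special values of the Riemann zeta function (equivalently, of the Bernoulli factors $\frac{|B_{2k}|}{2k}$), together with, when the rank $n+2$ is even, the factor $(n/2)!\cdot L(n/2+1,\chi_D)$ coming from the discriminant character. So the task is reduced to computing $\alpha_p(L)$ explicitly and rearranging the resulting Euler product into the four blocks $F(L)$, $G(L)$, $H(L)$, $C(L)$.

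\textbf{Odd primes.} First I would insert a Jordan decomposition $L\otimes\Zp=\bigoplus_j L_{p,j}(p^j)$ into Kitaoka's local density formula (\cite{Ki} \S 5.6). The hypothesis $U\subset L$ forces $U\otimes\Zp\subset L\otimes\Zp$, so the unimodular block $L_p=L_{p,0}$ is always nonzero and isotropic; this removes the degenerate cases of Kitaoka's formula and is exactly what underlies the surjectivity of ${\rm O}(L)\to\OAL$ used later. For $p>2$ one then reads off $\alpha_p(L)^{-1}$ as a product of the scaling factor $p^{-w_p}$, the polynomial factors $P_p([n_{p,j}/2])^{-1}$, and the sign corrections $1+\chi(L_{p,j})p^{-n_{p,j}/2}$. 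Separating the contribution of the unimodular block (whose corrections are absorbed into $F(L)$ and $G(L)$) from that of the higher blocks $L_{p,j}$ with $j>0$ (which form $C_p(L)$), and matching the $p\nmid|A_L|$ Euler factors against the Bernoulli product, reproduces the stated expression; when $n$ is even the factor $\frac{1-\chi_D(p)p^{-n/2-1}}{1-p^{-n-2}}$ appears upon factoring $L(n/2+1,\chi_D)$ according to whether $p\mid|A_L|$.

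\textbf{The prime $2$, and the main obstacle.} The hard part will be $p=2$. Kitaoka's formula there requires splitting each Jordan block $L_{2,j}$ into its even part $L_{2,j}^+$ and an odd part $L_{2,j}^-$ of rank $\leq 2$, and introduces the auxiliary data $s_2'$, $q=\sum_j q_j$, and the correction factors $E_{2,j}(L)$ --- including the delicate case distinction that singles out $L_{2,j}^-\simeq\langle\epsilon_1,\epsilon_2\rangle$ with $\epsilon_1\equiv\epsilon_2\pmod 4$. The plan is to perform the same separation of the unimodular block from the higher blocks as for odd $p$, carry these $2$-adic correction factors along verbatim, and verify that they assemble precisely into the $E_{2,0}(L)$ inside $G(L)$ (resp. $H(L)$) and into the $2$-adic factor of $C(L)$. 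Essentially all of the real work of the proposition lives in this $2$-adic bookkeeping; once it is done, the remaining manipulation of the Bernoulli, Gamma and $L$-function factors and the split into the two parities of $n$ are routine. I would therefore present the proposition as a consequence of \cite{G-H-S2} Theorem 3.1 together with the explicit computation carried out in \cite{Ma}, reproducing only those steps that are needed in the sequel.
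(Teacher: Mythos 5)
The paper gives no proof of this proposition at all: it is recalled verbatim from \cite{G-H-S2} Theorem 3.1 as reworked in \cite{Ma}, with the surrounding text only setting up the notation from Kitaoka's local density formula. Your proposal correctly identifies exactly this route (Siegel mass formula, local densities via \cite{Ki} \S 5.6, the $2$-adic bookkeeping, and final deferral to the cited references), so it matches the paper's treatment.
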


\subsection{Common estimates}\label{ssec:initial estimate}

Now let $L$ be an even lattice as in \eqref{condition:L=2U+M}, 
which is assumed through the rest of this section. 
We want to estimate the volume ratio ${\HM}(L, K)$ 
for each type of stably reflective vector $l$. 
Since $K$ contains $U$ by Lemma \ref{K contains U}, 
we can calculate ${\HMOK}$ by replacing $L$ with $K$ in Proposition \ref{HM formula}. 
To specify the dependence on the lattice, 
we shall write the numbers $D, n_{p,j}, s_p,\cdots$ as 
$D(L), n_{p,j}(L), s_p(L),\cdots$ and $D(K), n_{p,j}(K), s_p(K),\cdots$. 

In order to avoid repetition, 
we try to make a common estimate where it seems possible.  
More precisely, 
(1) we first give an estimate that is common for all types of $l$ while leaving some terms untouched, and 
(2) then refine it to the final estimate for each AI,$\cdots$, BIII type. 
This will save the length of the article, 
of course at the cost of (small) overestimate. 
In this \S \ref{ssec:initial estimate} we perform the step (1), separating cases by the parity of $n$. 




\subsubsection{The case of odd $n$} 

By Proposition \ref{HM formula}, ${\HM}(L, K)$ is written as 
\begin{equation*}
\frac{C(K)}{C(L)}\cdot \frac{F(K)}{F(L)}\cdot \frac{H(K)}{G(L)}\cdot \frac{2}{|B_{n+1}|} \cdot
((n+1)/2) ! \cdot L((n+1)/2, \chi_{D(K)}). 
\end{equation*} 
The first term is, by the definition, equal to 
\begin{equation*}
\frac{C(K)}{C(L)} = 
2\pi^{-(n+1)/2} \cdot \left( \frac{|A_K|}{|A_L|} \right)^{n/2+1} \cdot |A_L|^{-1/2} \cdot 
\prod_{p}\frac{C_p(K)}{C_p(L)}. 
\end{equation*}
We can estimate $G(L)$ as 
\begin{equation}\label{eqn:G(L)>}
G(L) \geq \prod_{p||A_L|}(1-p^{-n_p(L)/2}) > \zeta(2)^{-1} 
\end{equation}  
where $n_p(L)\geq{\rm rk}(2U)=4$. 
Similarly, we bound $G(K)$ as  
\begin{equation}\label{eqn:G(K)<}
G(K)   
\leq  (1+2^{-1}) \cdot \prod_{\begin{subarray}{c} p||A_K|, \: p>2 \\ n_p(K)\, even \end{subarray}} (1+p^{-n_p(K)/2})  
<  (9/8) \cdot\zeta(2), 
\end{equation}
where $n_p(K)\geq3$ holds for $p>2$ as can be seen from Table \ref{summary table}. 
By Euler's formula we can evaluate $\zeta(2)=\pi^2/6$. 
We also have 
\begin{eqnarray*}
& &        (H(K)/G(K)) \cdot L((n+1)/2, \chi_{D(K)}) \\ 
& = & \prod_{p||A_K|} (1-p^{-n-1})^{-1} \cdot \prod_{p\nmid |A_K|} (1-\chi_{D(K)}(p)p^{-(n+1)/2})^{-1}  \\ 
& \leq & \zeta((n+1)/2). 
\end{eqnarray*} 
The $F$-term can be bounded as follows. 

\begin{lemma}\label{F-ratio n odd}
We have $F(K)/F(L) \leq 1$ when $n$ is odd. 
\end{lemma}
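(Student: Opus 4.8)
The plan is to compare $F(K)$ and $F(L)$ factor by factor, using the fact that passing from $L$ to $K = l^\perp \cap L$ only changes a single Jordan block at the relevant primes, and hence can only \emph{increase} the ranks $n_p$ of the unimodular components (or leave them unchanged). Recall that
\[
F(L) = \prod_{\substack{p \mid |A_L| \\ [n_p/2]\leq [n/2]}} \prod_{k=[n_p/2]+1}^{[n/2]+1} (1-p^{-2k}),
\]
so each local factor is a product of terms $(1-p^{-2k})<1$ indexed by $k$ running from $[n_p/2]+1$ up to $[n/2]+1$. Since $K$ has signature $(2,n-1)$, the upper limit for $K$ is $[(n-1)/2]+1$, which for odd $n$ equals $[n/2]+1$ (as $[n/2]=[(n-1)/2]$ when $n$ is odd). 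Thus the upper limit is the same for $L$ and $K$, and everything reduces to comparing the lower limits $[n_p(L)/2]$ and $[n_p(K)/2]$, prime by prime.

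First I would go through the five types in Table \ref{summary table} and observe the behaviour of the unimodular rank $n_p$ at each prime $p \mid |A_K|$. For a prime $p$ not dividing $|A_L|$, the $L$-factor is absent (empty product, value $1$), so I must check that if such a $p$ divides $|A_K|$ then its $K$-factor is still $\leq 1$ --- but this is automatic since it is a product of factors $(1-p^{-2k})<1$, or empty. For a prime $p \mid |A_L|$: splitting off or gluing in a rank-one (types AI, AII, BI, BII, BIII) piece at $p$ changes the $p$-adic Jordan decomposition so that the unimodular part of $K \otimes \mathbb{Z}_p$ has rank $n_p(K) \geq n_p(L)$ (the vector $l$ or its orthogonal complement contributes to a \emph{non}-unimodular block at $p$, leaving the unimodular block untouched, or the gluing merges a block into the unimodular part). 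Hence $[n_p(K)/2] \geq [n_p(L)/2]$, so the $k$-range for $K$ is contained in that for $L$, and the $p$-factor of $F(K)$ divides that of $F(L)$; since all factors lie in $(0,1)$, this gives $F(K)/F(L) \leq 1$ at each prime, and multiplying over $p$ finishes the lemma.

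The main obstacle I anticipate is being careful about primes $p \mid |A_K|$ with $p \nmid |A_L|$ --- this genuinely occurs (e.g. type AI, where $|A_L| \mid 2|A_K|$ so $|A_K|$ can pick up no new primes, but types AII, BII glue in a $2$ and type BIII splits off a $\mathbb{Z}/2$, so one should double-check whether new odd primes can appear; a quick inspection of $A_K$ versus $A_L$ in Table \ref{summary table} shows they cannot, as $A_K$ is always built from $A_L$ by adding/removing $2$-power pieces). Provided the set of odd primes dividing the discriminant is unchanged and $2 \mid |A_K|$ is only a concern for the structure of $F$ when $2 \mid |A_L|$ as well, the factor-by-factor argument closes cleanly. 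The only slightly delicate point is verifying $n_p(K) \geq n_p(L)$ in the type-AII and type-BII cases where $L \supset \mathbb{Z}l \oplus K$ has index $2$; there one uses the gluing description of Lemma \ref{Nikulin gluing} to see that the index-$2$ extension does not decrease the unimodular rank at any prime.
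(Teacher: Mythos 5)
Your overall strategy (same upper limit $(n+1)/2$ for both products when $n$ is odd, then compare the lower limits $[n_p/2]$ prime by prime, with empty products handling primes where one factor is absent) is exactly the right one, and matches the paper. But the key inequality you assert is backwards. You claim $n_p(K)\geq n_p(L)$, arguing that the vector $l$ "contributes to a non-unimodular block at $p$, leaving the unimodular block untouched." This fails already in type AI at an odd prime $p$ dividing $|A_L|$: there $L\otimes\Zp\simeq\langle -2\rangle\oplus(K\otimes\Zp)$ and $\langle -2\rangle$ is \emph{unimodular} over $\Zp$, so it sits inside $L_{p,0}$ and $n_p(L)=n_p(K)+1$. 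The same happens in type AII at odd $p$, and at $p=2$ in type AII one even has $n_2(K)=n_2(L)-2$. The correct general statement, which is what the paper uses, is $n_p(K)\leq n_p(L)$ for every $p$; this is consistent with $\operatorname{rk}(K)=\operatorname{rk}(L)-1$ and with the fact that $|A_K|$ can only pick up $2$-power factors.

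There is a second, compensating sign error in your deduction. If (as you claim) the $k$-range for $K$ were \emph{contained} in that for $L$, then $F(K)_p$ would be a sub-product of $F(L)_p$, i.e.\ $F(L)_p=F(K)_p\cdot(\text{extra factors in }(0,1))$, which gives $F(K)_p/F(L)_p\geq 1$ --- the opposite of what you conclude. The valid argument runs the other way on both counts: since $n_p(K)\leq n_p(L)$, the lower limit $[n_p(K)/2]+1$ is at most $[n_p(L)/2]+1$, so the $k$-range for $K$ \emph{contains} that for $L$; hence every factor of $F(L)$ appears in $F(K)$, the additional factors of $F(K)$ all lie in $(0,1)$, and therefore $F(K)\leq F(L)$. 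Your handling of primes dividing $|A_K|$ but not $|A_L|$ (empty $L$-factor versus a $K$-factor that is automatically $\leq 1$) is fine and can be kept as is.
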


\begin{proof}
Below we use the convention that 
whenever we write a product $\prod_{k=k_0}^{k_1}a_k$ with $k_0>k_1$, it means $=1$. 
Then we can write 
\begin{equation*}
F(L) = \prod_{p} \prod_{k=[n_p(L)/2]+1}^{(n+1)/2} (1-p^{-2k}),  \quad \; 
F(K) = \prod_{p} \prod_{k=[n_p(K)/2]+1}^{(n+1)/2} (1-p^{-2k}). 
\end{equation*}           
Since $n_p(K) \leq n_p(L)$ for each $p\geq2$,  
every factor of $F(L)$ appears also in $F(K)$. 
\end{proof}

To sum up, we obtain the following intermediate estimate. 

\begin{lemma}\label{initial estimate odd}
When $n$ is odd, we have 
\begin{equation*}
\begin{split}
{\HM}(L, K) 
& <  2^{-3}\cdot \pi^{(-n+7)/2} \cdot |B_{n+1}|^{-1} \cdot ((n+1)/2)! \cdot \zeta((n+1)/2) \\ 
& \times ( |A_K|/|A_L| )^{n/2+1} \cdot |A_L|^{-1/2} \cdot \prod_{p}C_p(K)/C_p(L). 
\end{split}
\end{equation*}
\end{lemma}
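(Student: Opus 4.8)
The plan is to assemble Lemma~\ref{initial estimate odd} directly from the volume formula in Proposition~\ref{HM formula}(1), plugging in the crude bounds already collected above for each of the factors $C(K)/C(L)$, $F(K)/F(L)$, $G(L)$, $G(K)$, $H(K)$, and the $L$-value. First I would write out ${\HM}(L,K)$ as the quotient of the two formulas: since both $L$ and $K$ contain $U$ and have signature $(2,\cdot)$ with $n$ odd and $n-1=\dim K$ even, the numerator ${\HMOK}$ is the even-rank formula and the denominator ${\HMOL}$ is the odd-rank formula, so
\begin{equation*}
{\HM}(L,K) = \frac{C(K)F(K)H(K)\,\prod_{k=1}^{(n-1)/2}\frac{|B_{2k}|}{2k}\cdot((n-1)/2)!\cdot L((n+1)/2,\chi_{D(K)})}{C(L)F(L)G(L)\,\prod_{k=1}^{(n+1)/2}\frac{|B_{2k}|}{2k}}.
\end{equation*}
The ratio of Bernoulli products telescopes to $\bigl(\frac{|B_{n+1}|}{n+1}\bigr)^{-1}$, and $((n-1)/2)!\cdot(n+1)\cdot\tfrac12 = ((n+1)/2)!$ up to the harmless factor I'll carry, so this accounts for the $|B_{n+1}|^{-1}\cdot((n+1)/2)!$ in the target (the constant $2$ from $(n+1)/2\cdot\tfrac{1}{((n+1)/2)}$ folding into the numerical constant).

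Next I would substitute the closed form for $C(K)/C(L)$ recorded above, which already isolates the factor $2\pi^{-(n+1)/2}\cdot(|A_K|/|A_L|)^{n/2+1}\cdot|A_L|^{-1/2}\cdot\prod_p C_p(K)/C_p(L)$; this supplies the $(|A_K|/|A_L|)^{n/2+1}$, the $|A_L|^{-1/2}$, the product $\prod_p C_p(K)/C_p(L)$, and a $\pi^{-(n+1)/2}$. Then I bundle the remaining analytic factors: by \eqref{eqn:G(L)>} we have $G(L)^{-1}<\zeta(2)=\pi^2/6$, and the computation displayed just before Lemma~\ref{F-ratio n odd} gives $(H(K)/G(K))\cdot L((n+1)/2,\chi_{D(K)})\leq\zeta((n+1)/2)$, while \eqref{eqn:G(K)<} bounds $G(K)<(9/8)\zeta(2)$; multiplying, the $H(K)\cdot L(\cdots)$ contributes $\zeta((n+1)/2)\cdot G(K)$, and combined with $G(L)^{-1}$ this is at most $(9/8)\zeta(2)^2\cdot\zeta((n+1)/2)$, i.e. a pure constant times $\zeta((n+1)/2)$ and a power $\pi^{4}$ absorbed into the exponent of $\pi$. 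Finally Lemma~\ref{F-ratio n odd} gives $F(K)/F(L)\leq1$, so that factor is simply dropped.

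The only real bookkeeping step is to reconcile the powers of $\pi$ and the leading numerical constant: one $\pi^{-(n+1)/2}$ from $C(K)/C(L)$, plus $\pi^2$ from each of the two $\zeta(2)$ factors (so $\pi^4$), gives $\pi^{-(n+1)/2+4}=\pi^{(-n+7)/2}$ as stated; the constants $2$ (from $C(K)/C(L)$), $\tfrac{1}{6^2}$ (from $\zeta(2)^{-2}$ appearing with the right sign — here I must be careful that $G(L)^{-1}$ contributes $\zeta(2)$ as an \emph{upper} bound, and $G(K)$ likewise, so what I actually get is a product of two \emph{upper} bounds, each $\tfrac{\pi^2}{6}$ up to the $9/8$), and the extra $2$ from the Bernoulli telescoping, collect into $\leq 2^{-3}$ after one checks $2\cdot 2\cdot(9/8)\cdot 6^{-2}<2^{-3}$. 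I expect this constant-chasing to be the main obstacle — not conceptually, but because the inequalities point in opposite directions for $G(L)$ (lower bound used as $G(L)^{-1}<\zeta(2)$) versus $G(K)$ (upper bound), and one must verify that multiplying genuine upper bounds for $G(K)$, $H(K)/G(K)\cdot L$, $|B_{n+1}|^{-1}((n+1)/2)!$, and $C(K)/C(L)$, together with $F(K)/F(L)\le1$ and $G(L)^{-1}<\zeta(2)$, really yields the displayed constant $2^{-3}$ with room to spare; once the signs of all the inequalities are lined up this is a one-line numerical check.
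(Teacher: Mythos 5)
Your proposal is correct and follows essentially the same route as the paper: write ${\HM}(L,K)$ as the quotient of the odd-rank formula for $L$ and the even-rank formula for $K$ (signature $(2,n-1)$), telescope the Bernoulli products to get $2((n+1)/2)!/|B_{n+1}|$, insert the closed form of $C(K)/C(L)$, and apply the bounds $F(K)/F(L)\le 1$, $G(L)^{-1}<\zeta(2)$, $G(K)<(9/8)\zeta(2)$ and $(H(K)/G(K))L((n+1)/2,\chi_{D(K)})\le\zeta((n+1)/2)$. The only slip is at the very end: $2\cdot 2\cdot(9/8)\cdot 6^{-2}$ equals $2^{-3}$ exactly rather than being strictly smaller, which is harmless since the strict inequality of the lemma already comes from the strict bounds on $G(L)$ and $G(K)$.
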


\subsubsection{The case of even $n$} 

By Proposition \ref{HM formula}, ${\HM}(L, K)$ is written as 
\begin{equation*}
\frac{C(K)}{C(L)}\cdot \frac{F(K)}{F(L)}\cdot \frac{G(K)}{H(L)}\cdot (n/2)!^{-1} \cdot L(n/2+1, \chi_{D(L)})^{-1}. 
\end{equation*} 
The first term is equal to 
\begin{equation*}
\frac{C(K)}{C(L)} = 
2\pi^{n/2+1} \cdot \left( \frac{|A_K|}{|A_L|} \right)^{n/2+1} \cdot |A_L|^{-1/2} \cdot \prod_{p}\frac{C_p(K)}{C_p(L)}. 
\end{equation*}
The same arguments as \eqref{eqn:G(L)>} and \eqref{eqn:G(K)<} show that 
$G(L) > \zeta(2)^{-1}$ and  
$G(K) <  (9/8)\cdot\zeta(2)$.  
We also have 
\begin{eqnarray*}
(H(L)/G(L)) \cdot L(n/2+1, \chi_{D(L)})  
& \geq & \prod_{p||A_L|} (1-p^{-n-2})^{-1} \cdot \prod_{p\nmid |A_L|} (1+p^{-n/2-1})^{-1}  \\ 
& =  & \zeta(n+2) \cdot \prod_{p\nmid |A_L|} (1-p^{-n/2-1})  \\ 
& \geq & \zeta(n+2) \cdot \zeta(n/2+1)^{-1}.  
\end{eqnarray*} 
We next estimate the $F$-term. 
 
\begin{lemma}\label{F-ratio n even}
We have $F(K)/F(L) < \zeta(n+2)$ when $n$ is even. 
\end{lemma}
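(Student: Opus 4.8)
The plan is to compare $F(K)$ and $F(L)$ factor-by-factor exactly as in the proof of Lemma \ref{F-ratio n odd}, but now being careful about the single place where the comparison can fail. Recall that
\begin{equation*}
F(L) = \prod_{p} \prod_{k=[n_p(L)/2]+1}^{n/2+1} (1-p^{-2k}),  \qquad
F(K) = \prod_{p} \prod_{k=[n_p(K)/2]+1}^{n/2+1} (1-p^{-2k}),
\end{equation*}
with the convention that an empty product equals $1$. Since $K=l^{\perp}\cap L$ has signature $(2,n-1)$, the cutoff index $[n/2]+1$ is now the \emph{same} for $K$ and $L$ (because $n$ is even, $[(n-1)/2]+1=n/2=[n/2]$ is one less than the exponent range for $L$ \emph{only if} we are not careful — one must check the exact range appearing in the definition of $F$, which reads $[n_p/2]+1$ up to $[n/2]+1$ for an even lattice of signature $(2,n)$, while for $K$ of signature $(2,n-1)$ it is $[(n-1)/2]+1 = n/2$). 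So $F(K)$ is a product up to $k=n/2$ and $F(L)$ up to $k=n/2+1$; this is the source of the extra factor.

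First I would observe that for every prime $p$ one has $n_p(K)\le n_p(L)$ (passing to an orthogonal complement of a rank-one sublattice can only decrease the rank of the unimodular Jordan component, or shift mass to non-unimodular components — this is visible from Table \ref{summary table} in each case, or can be argued lattice-theoretically). Hence $[n_p(K)/2]+1 \le [n_p(L)/2]+1$, so every factor $(1-p^{-2k})$ with $k$ in the range $[n_p(L)/2]+1 \le k \le n/2$ that appears in $F(L)$ also appears in $F(K)$. Grouping the quotient this way, all such factors cancel, and what survives in $F(K)/F(L)$ is: in the numerator, the factors with $[n_p(K)/2]+1 \le k \le [n_p(L)/2]$ (each of which is $<1$, so they only help); and in the denominator, the single factor $\prod_p (1-p^{-2(n/2+1)})^{-1} = \prod_p (1-p^{-n-2})^{-1}$ coming from the extra top index $k=n/2+1$ present in $F(L)$ but absent in $F(K)$. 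Therefore
\begin{equation*}
\frac{F(K)}{F(L)} \;\le\; \prod_{p} (1-p^{-n-2})^{-1} \;=\; \zeta(n+2),
\end{equation*}
which is the claimed bound.

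The only genuinely delicate point — the "main obstacle" such as it is — is verifying that the index ranges in the definition of $F$ line up as asserted, i.e. that the top index for a lattice of signature $(2,n)$ with $n$ even is $[n/2]+1 = n/2+1$ while for its rank-$(n+1)$-total complement (signature $(2,n-1)$) it is $[(n-1)/2]+1 = n/2$, so that passing from $F(L)$ to $F(K)$ drops exactly one tier of factors, namely the $k=n/2+1$ tier across all relevant primes. Once that bookkeeping is pinned down, the inequality $n_p(K)\le n_p(L)$ makes every remaining factor of $F(L)$ reappear in $F(K)$, extra numerator factors are all $<1$, and the bound $\prod_p(1-p^{-n-2})^{-1}=\zeta(n+2)$ follows immediately. (Strictly one gets $F(K)/F(L) < \zeta(n+2)$ rather than $\le$, since at least one numerator factor is strictly less than $1$ whenever some $n_p(K) < n_p(L)$, and if $n_p(K)=n_p(L)$ for all $p$ one should double-check the product over the relevant primes is still a proper sub-product of $\zeta(n+2)$; in any case the strict inequality is harmless to state.)
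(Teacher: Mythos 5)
Your proof is correct and follows essentially the same route as the paper: write out both products, note that the top index drops from $n/2+1$ for $L$ to $n/2$ for $K$ because $K$ has signature $(2,n-1)$, use $n_p(K)\le n_p(L)$ to see that every factor of $F(L)$ with $k\le n/2$ reappears in $F(K)$, and absorb the leftover denominator tier $\prod_p(1-p^{-n-2})^{-1}$ into $\zeta(n+2)$. The only blemish is that your displayed formula for $F(K)$ shows the upper limit $n/2+1$ before you correct it to $n/2$ in the prose; the strict inequality is fine since the product is over the finitely many primes dividing $|A_L|$, hence a proper sub-product of $\zeta(n+2)$.
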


\begin{proof}
We use the same convention as in the proof of Lemma \ref{F-ratio n odd}. 
Then $F(L)$ and $F(K)$ can be written as  
\begin{equation*}
F(L) = \prod_{p} \prod_{k=[n_p(L)/2]+1}^{n/2+1} (1-p^{-2k}),  \quad \; 
F(K) = \prod_{p} \prod_{k=[n_p(K)/2]+1}^{n/2} (1-p^{-2k}). 
\end{equation*}           
Since $n_p(K) \leq n_p(L)$ for every $p\geq2$,  
we see that $F(K)/F(L)$ is smaller than $\prod_{p}(1-p^{-n-2})^{-1}$.   
\end{proof} 

To sum up, we obtain

\begin{lemma}\label{initial estimate even}
When $n$ is even, we have 
\begin{equation*}
\begin{split}
{\HM}(L, K) & < 2^{-4}\cdot \pi^{n/2+5} \cdot (n/2)!^{-1} \cdot \zeta(n/2+1)  \\ 
                 & \times ( |A_K|/|A_L| )^{n/2+1} \cdot |A_L|^{-1/2} \cdot \prod_{p}C_p(K)/C_p(L). 
\end{split}
\end{equation*}
\end{lemma}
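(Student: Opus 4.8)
The plan is to assemble the bound mechanically from Proposition~\ref{HM formula} and the three estimates established immediately before the statement. Since $L\supset U$ and $K=l^\perp\cap L\supset U$ by Lemma~\ref{K contains U}, and since $K$ has signature $(2,n-1)$ with $n-1$ odd, I would apply part~(2) of Proposition~\ref{HM formula} to $L$ and part~(1) to $K$. The Bernoulli products $\prod_{k=1}^{n/2}|B_{2k}|/(2k)$ then occur identically in $\HMOK$ and $\HMOL$ and cancel, leaving exactly
\begin{equation*}
{\HM}(L,K)=\frac{C(K)}{C(L)}\cdot\frac{F(K)}{F(L)}\cdot\frac{G(K)}{H(L)}\cdot(n/2)!^{-1}\cdot L(n/2+1,\chi_{D(L)})^{-1},
\end{equation*}
which is the identity recorded at the start of the even-$n$ discussion.

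Next I would plug in the explicit evaluation of $C(K)/C(L)$: expanding the definitions of $C(K)$ (odd case) and $C(L)$ (even case), the prefactors $8(|A_K|/4)^{(n+2)/2}$ and $8\sqrt{\pi}(|A_L|/4\pi)^{(n+3)/2}$ combine to $2\pi^{n/2+1}(|A_K|/|A_L|)^{n/2+1}|A_L|^{-1/2}$, with the non-unimodular parts contributing $\prod_p C_p(K)/C_p(L)$. For the remaining factors I would feed in the bounds already derived: $F(K)/F(L)<\zeta(n+2)$ from Lemma~\ref{F-ratio n even}; $G(K)<(9/8)\zeta(2)$ and $G(L)>\zeta(2)^{-1}$; and $(H(L)/G(L))\cdot L(n/2+1,\chi_{D(L)})\ge\zeta(n+2)\zeta(n/2+1)^{-1}$. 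Writing $H(L)L(n/2+1,\chi_{D(L)})=G(L)\cdot(H(L)/G(L))L(n/2+1,\chi_{D(L)})$ and combining, the factor $G(K)/(H(L)L(n/2+1,\chi_{D(L)}))$ is bounded by $(9/8)\zeta(2)^2\cdot\zeta(n/2+1)/\zeta(n+2)$.

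Multiplying the pieces together, the two occurrences of $\zeta(n+2)$ cancel — this cancellation, arranged precisely by Lemma~\ref{F-ratio n even} and the $H/G$ inequality, is the one structural point, as it prevents the bound from picking up a growing $\zeta(n+2)$ — and with $\zeta(2)=\pi^2/6$ the numerical constant collapses: $2\cdot(9/8)\zeta(2)^2=2^{-4}\pi^4$. Absorbing the $\pi^{n/2+1}$ from the $C$-ratio yields the claimed prefactor $2^{-4}\pi^{n/2+5}$, together with $(n/2)!^{-1}$, $\zeta(n/2+1)$, $(|A_K|/|A_L|)^{n/2+1}$, $|A_L|^{-1/2}$, and $\prod_p C_p(K)/C_p(L)$, which is exactly the asserted inequality. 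I do not expect a genuine obstacle here: the substantive inputs are the prior lemmas, and what remains is careful bookkeeping of the $\pi$-powers and the numerical constant as described.
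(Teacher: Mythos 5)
Your proposal is correct and follows essentially the same route as the paper: apply Proposition \ref{HM formula}(2) to $L$ and (1) to $K$, cancel the common Bernoulli product, evaluate $C(K)/C(L)=2\pi^{n/2+1}(|A_K|/|A_L|)^{n/2+1}|A_L|^{-1/2}\prod_p C_p(K)/C_p(L)$, and combine the bounds $F(K)/F(L)<\zeta(n+2)$, $G(K)<(9/8)\zeta(2)$, $G(L)>\zeta(2)^{-1}$ and $(H(L)/G(L))L(n/2+1,\chi_{D(L)})\ge\zeta(n+2)\zeta(n/2+1)^{-1}$ so that the $\zeta(n+2)$ factors cancel and $2\cdot(9/8)\zeta(2)^2=2^{-4}\pi^4$. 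The bookkeeping of exponents and constants is exactly as in the paper's derivation preceding the lemma.
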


\subsection{Estimate of type A}\label{ssec:type A estimate}

Let the stably reflective vector $l\in L$ be of type AI or AII. 
We are going to develop the estimate in \S \ref{ssec:initial estimate} in these cases. 
First recall from Table \ref{summary table} that 
\begin{equation}\label{eqn:discri ratio A}
|A_K|/|A_L| = 2^{-1}, \; \; 2 
\end{equation}
in the AI, AII cases respectively. 

Next, notice that $L$ contains $\langle-2\rangle \oplus K$ with index $\leq2$ in both cases. 
Hence for $p>2$ we have the orthogonal splitting 
\begin{equation*}
L \otimes {\Zp} \simeq \langle-2\rangle \oplus (K\otimes {\Zp}). 
\end{equation*}
In particular, we have 
$L_{p,j} \simeq K_{p, j}$ for $j>0$ and hence  
\begin{equation}\label{eqn:Cp type A} 
C_p(L) = C_p(K), \quad p>2. 
\end{equation}
It remains to estimate $C_2(K)/C_2(L)$.

\begin{lemma}\label{C2 ratio A}
We have 
\begin{equation*}\label{eqn:C2 ratio B}
\frac{C_2(K)}{C_2(L)} \leq 
\begin{cases} 
\: \; \; 2^{n+2},           &  *= \textrm{AI}, \\ 
\: 3^{-1}\cdot2^6,       &  *= \textrm{AII}. 
\end{cases}
\end{equation*}
\end{lemma}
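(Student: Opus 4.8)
The plan is to compute $C_2(K)/C_2(L)$ from the defining formula
\[
C_2(M) = 2^{1-s_2(M)-s_2'(M)-w_2(M)+q(M)} \prod_{j>0}P_2(n_{2,j}^+(M)/2)^{-1} E_{2,j}(M)
\]
by controlling how each ingredient ($s_2$, $s_2'$, $w_2$, $q$, the $P_2$-factors, the $E_{2,j}$-factors) changes when one passes from $L$ to $K=l^\perp\cap L$. The starting point is to pin down the relation between the $2$-adic lattices $L\otimes\Z_2$ and $K\otimes\Z_2$. In type AI we have the \emph{orthogonal} splitting $L=\langle-2\rangle\oplus K$ over $\Z$ (hence over $\Z_2$), so $L\otimes\Z_2 \simeq \langle-2\rangle \oplus (K\otimes\Z_2)$ with $\langle-2\rangle$ a rank-one $2$-unimodular (odd) summand; in type AII the vector $l$ has $\divi(l)=1$ and $(l,l)=-2$, so $L\supset \Z l\oplus K$ with index $2$, and over $\Z_2$ we may use Lemma~\ref{Nikulin gluing} (the gluing element $\lambda\in A_K$ has order $2$ and norm $-(l,l)/4\equiv 1/2$) to describe $K\otimes\Z_2$ in terms of $L\otimes\Z_2$: concretely $A_K\simeq a\oplus A_L$ by Lemma~\ref{A_K AII}, which tells us the Jordan constituents $K_{2,j}$ for $j>0$ agree with $L_{2,j}$ while the unimodular part $K_{2,0}$ differs from $L_{2,0}$ by a rank-one odd piece $a=\langle 2\rangle$ (up to the effect of gluing).

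The key observation is that in \emph{both} types the higher Jordan constituents are essentially preserved: $K_{2,j}\simeq L_{2,j}$ for all $j>0$ (in AI by the orthogonal splitting; in AII because $A_K\simeq a\oplus A_L$ only alters the unimodular $p=2$ part). Consequently the products $\prod_{j>0}P_2(n_{2,j}^+/2)^{-1}$ and $\prod_{j>0}E_{2,j}$ change only through the $E_{2,j}$ for the \emph{smallest} positive $j$, whose defining condition depends on whether $L_{2,j-1}=L_{2,0}$ is even or odd; since each $E_{2,j}\in\{1,\,1\pm 2^{-n_{2,j}^+/2}\}$ lies in $[1/2, 2]$, the ratio of these products is bounded by an absolute constant. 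Likewise $w_2$ is unchanged by modifying only the unimodular ($j=0$) constituent, $s_2$ changes by at most $1$, and $s_2'$ changes by at most $1$ (a new index $j=1$ can become ``adjacent to an odd constituent'' or cease to be so when the parity of $L_{2,0}$ flips). The quantity $q=\sum_j q_j$ is the one that genuinely grows: $q_0$ can jump from $0$ to $n_{2,0}$ (or $n_{2,0}+1$) when $L_{2,0}$ becomes odd, and $n_{2,0}(K)\le n$. So I would assemble the bound as
\[
\frac{C_2(K)}{C_2(L)} \;\le\; 2^{\,\Delta s_2 + \Delta s_2' + \Delta q}\cdot \Big(\text{bounded }P_2,E_{2,j}\text{ factors}\Big),
\]
and then in type AI track the exponents carefully: $\Delta q \le n + O(1)$ plus the small corrections gives the stated $2^{n+2}$. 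In type AII the unimodular rank over $\Z_2$ only increases by $1$, the Jordan structure is otherwise rigid, and one should be able to see that the net power of $2$ is bounded absolutely, with the $P_2$-factor (which can be as small as $P_2(1)^{-1}=(1-1/4)^{-1}=4/3$, etc.) and the $E_{2,j}$-factor contributing a few more powers of $2$ and a single factor of $1/3$, yielding $3^{-1}\cdot 2^6$.

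The main obstacle is the careful bookkeeping in type AI of how an \emph{even} unimodular constituent $L_{2,0}$ acquires an odd rank-one summand and thereby flips $q_0$, $s_2'$, and possibly $E_{2,1}$ all at once — and in particular ensuring the $E_{2,j}$ and $P_2$ contributions do not secretly introduce an extra factor of $p^{n}$-size. One must also handle the degenerate cases where $L_{2,0}$ was already odd, or where $|A_L|$ is odd (so $p=2$ contributes nothing to $C_2(L)$ but $K$ may still have $2$-torsion in its discriminant): these should be checked separately but are strictly easier. In type AII the subtle point is the precise form of the gluing: one must verify that passing from $L$ to $K$ over $\Z_2$ really does keep every $L_{2,j}$, $j>0$, intact and only adjusts the unimodular part, so that Lemma~\ref{A_K AII} can be leveraged directly; once that is clear the estimate is routine. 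I would organize the proof as: (i) fix the $\Z_2$-lattice comparison in each case via Lemma~\ref{Nikulin gluing}; (ii) list the at-most-unit changes in $s_2$, $s_2'$, $w_2$, and the $P_2$, $E_{2,j}$ products; (iii) bound $\Delta q$; (iv) multiply out and read off the two claimed bounds.
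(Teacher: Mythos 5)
Your overall strategy — compare $C_2(K)$ and $C_2(L)$ term by term via the explicit formula and suitable Jordan decompositions — is the paper's approach, but your identification of \emph{where} the Jordan decomposition changes is wrong, and this derails both estimates. A vector of norm $\pm 2$ generates the constituent $\langle\pm 1\rangle(2)$ at scale $j=1$, not a unimodular piece. So in type AI one has $L_{2,1}\simeq\langle -1\rangle\oplus K_{2,1}$ and $L_{2,j}\simeq K_{2,j}$ only for $j\ne 1$; in type AII, $A_K\simeq a\oplus A_L$ means $K_{2,1}\simeq L_{2,1}\oplus\langle\epsilon\rangle$ — the unimodular parts are untouched (indeed they \emph{must} stay even, since $L$ and $K$ are even lattices, so $q_0=0$ always and your proposed mechanism "$q_0$ jumps from $0$ to $n_{2,0}$ when $L_{2,0}$ becomes odd" cannot occur). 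Because the change sits at $j=1$, the quantity $w_2$ is \emph{not} unchanged: in AI one computes $w_2(L)-w_2(K)=\sum_{j>0}n_{2,j}(L)\le n-2$, and \emph{this} is the actual source of the $2^{n+2}$, while $q(K)-q(L)\le 0$ there.

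In type AII your reasoning also misses the essential cancellation. There $q_1(K)$ can be as large as $n_{2,1}(L)+2$ (not an absolute constant, contrary to your claim that "the net power of $2$ is bounded absolutely" by rigidity of the Jordan structure); the bound $3^{-1}\cdot 2^6$ only emerges because $w_2(L)-w_2(K)=-1-\sum_{j>0}n_{2,j}(L)\le -1-n_{2,1}(L)$ exactly offsets the growth $q(K)-q(L)\le n_{2,1}(L)+2$, leaving $2^{+1}$ from these two terms, with the remaining $2^{1}\cdot\tfrac43\cdot 4$ coming from $s_2'$, the $P_2$-ratio, and the $E_{2,j}$-ratio. Without locating the modification at $j=1$ and tracking $w_2$ and $q$ against each other, neither of the two stated bounds can be recovered, so as written the proof does not go through.
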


\begin{proof}
We first consider the AI case. 
Since $L\simeq\langle-2\rangle\oplus K$, 
we may take Jordan decompositions of $L\otimes{\Z}_2$ and $K\otimes{\Z}_2$ so that 
$L_{2,j} \simeq K_{2,j}$ for $j\ne1$ and  
$L_{2,1} \simeq \langle-1\rangle\oplus K_{2,1}$. 
Then we can extract the following inequalities: 
\begin{equation*}
s_2(L) \leq s_2(K)+1, 
\end{equation*}
\begin{equation*}
s_2'(L) \leq s_2'(K)+1, 
\end{equation*}
\begin{equation}\label{eqn:overestimate 1}
w_2(L)-w_2(K) = \sum_{j>0}n_{2,j}(L) \leq n-2, 
\end{equation}
\begin{equation}\label{eqn:q estimate AI}
q(K)-q(L)  = q_1(K)-q_1(L) \leq n_{2,1}(K)+1-n_{2,1}(L) = 0.  
\end{equation}
In the last inequality $q_1(L)\geq n_{2,1}(L)$ holds because $L_{2,1}$ is odd. 
We also have  
\begin{equation*}
\prod_{j>0}\frac{P_2(n_{2,j}^+(L)/2)}{P_2(n_{2,j}^+(K)/2)} = 
\frac{P_2(n_{2,1}^+(L)/2)}{P_2(n_{2,1}^+(K)/2)} \leq 1,  
\end{equation*}
\begin{equation*}
\prod_{j>0}\frac{E_{2,j}(K)}{E_{2,j}(L)} = 
\frac{E_{2,1}(K)}{E_{2,1}(L)} \cdot \frac{E_{2,2}(K)}{E_{2,2}(L)} \leq 2\cdot\frac{2}{1} = 4.  
\end{equation*}

We next consider the AII case. 
By Lemma \ref{A_K AII} we can take Jordan decompositions of $L\otimes{\Z}_2$ and $K\otimes{\Z}_2$ so that  
$L_{2,j} \simeq K_{2,j}$ for $j>1$ and  
$L_{2,1} \oplus \langle \epsilon \rangle \simeq K_{2,1}$ for some $\epsilon\in{\Z}_2^{\times}$. 
Then we can see the following: 
\begin{equation*}
s_2(L) \leq s_2(K), 
\end{equation*}
\begin{equation*}
s_2'(L) \leq s_2'(K)+1, 
\end{equation*}
\begin{equation}\label{eqn:overestimate 2}
w_2(L)-w_2(K) = -1-\sum_{j>0}n_{2,j}(L) \leq -1-n_{2,1}(L), 
\end{equation}
\begin{equation}\label{eqn:q estimate AII}
q(K)-q(L)  = q_1(K)-q_1(L) \leq q_1(K) \leq n_{2,1}(L)+2,  
\end{equation}
\begin{equation*}
\prod_{j>0}\frac{P_2(n_{2,j}^+(L)/2)}{P_2(n_{2,j}^+(K)/2)} = 
\frac{P_2(n_{2,1}^+(L)/2)}{P_2(n_{2,1}^+(K)/2)} 
\leq \frac{1}{1-2^{-n_{2,1}^+(L)-2}} \leq \frac{4}{3},  
\end{equation*}
\begin{equation*}
\prod_{j>0}\frac{E_{2,j}(K)}{E_{2,j}(L)} = 
\frac{E_{2,1}(K)}{E_{2,1}(L)} \cdot \frac{E_{2,2}(K)}{E_{2,2}(L)} \leq  
\left( \frac{1}{1-2^{-1}} \right)^2 = 4. 
\end{equation*}
\end{proof}

By incorporating \eqref{eqn:discri ratio A}, \eqref{eqn:Cp type A}, Lemma \ref{C2 ratio A} 
with Lemmas \ref{initial estimate odd} and \ref{initial estimate even}, 
we obtain the final estimate for type A.

\begin{proposition}\label{vol estimate A}
Let the stably reflective vector $l\in L$ be of type $*=$ AI or AII. 
We define functions $f_{AI}(n)$ and $f_{AII}(n)$ by 
\begin{equation*}\label{eqn:fAI}
f_{AI}(n) = 
\begin{cases} 
2^{n/2-2}\cdot \pi^{(-n+7)/2} \cdot |B_{n+1}|^{-1} \cdot ((n+1)/2)! \cdot \zeta((n+1)/2)   &  n:  \textrm{odd}, \\ 
2^{n/2-3}\cdot \pi^{n/2+5} \cdot (n/2)!^{-1} \cdot \zeta(n/2+1)                                     & n:  \textrm{even}, 
\end{cases}
\end{equation*}
\begin{equation*}\label{eqn:fAII}
f_{AII}(n) = 3^{-1} \cdot 2^{6} \cdot f_{AI}(n). 
\end{equation*}
Then we have 
\begin{equation*}
{\HM}(L, K) <  f_{*}(n)\cdot |A_L|^{-1/2}. 
\end{equation*}
\end{proposition}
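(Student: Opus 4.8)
The plan is to assemble Proposition~\ref{vol estimate A} purely by bookkeeping: combine the two intermediate bounds of \S\ref{ssec:initial estimate} (Lemmas~\ref{initial estimate odd} and \ref{initial estimate even}) with the type-A refinements \eqref{eqn:discri ratio A}, \eqref{eqn:Cp type A}, and Lemma~\ref{C2 ratio A}, and read off the coefficient function. Concretely, in the odd-$n$ case Lemma~\ref{initial estimate odd} gives
\begin{equation*}
{\HM}(L, K) < 2^{-3}\cdot \pi^{(-n+7)/2}\cdot |B_{n+1}|^{-1}\cdot ((n+1)/2)!\cdot \zeta((n+1)/2)\cdot (|A_K|/|A_L|)^{n/2+1}\cdot |A_L|^{-1/2}\cdot \prod_p C_p(K)/C_p(L),
\end{equation*}
and Lemma~\ref{initial estimate even} gives the analogous expression with $2^{-4}\cdot \pi^{n/2+5}\cdot (n/2)!^{-1}\cdot \zeta(n/2+1)$ for even $n$. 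So everything reduces to controlling the two lattice-dependent factors $(|A_K|/|A_L|)^{n/2+1}$ and $\prod_p C_p(K)/C_p(L)$ in each of the AI and AII cases.

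First I would handle the prime-to-$2$ part. Since $L$ contains $\langle-2\rangle\oplus K$ with index $\le 2$, the extension index is a $2$-power, so over ${\Zp}$ for $p>2$ we get the orthogonal splitting $L\otimes{\Zp}\simeq\langle-2\rangle\oplus(K\otimes{\Zp})$; the summand $\langle-2\rangle$ is unimodular over ${\Zp}$, so $L_{p,j}\simeq K_{p,j}$ for all $j>0$, whence $C_p(L)=C_p(K)$ as recorded in \eqref{eqn:Cp type A}. That leaves the single factor $C_2(K)/C_2(L)$, for which Lemma~\ref{C2 ratio A} supplies $\le 2^{n+2}$ in the AI case and $\le 3^{-1}2^6$ in the AII case. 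Then substitute the discriminant ratios from \eqref{eqn:discri ratio A}: $(|A_K|/|A_L|)^{n/2+1}=2^{-(n/2+1)}$ for AI and $=2^{n/2+1}$ for AII.

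Now I multiply through. For AI, the lattice-dependent factors contribute $2^{-(n/2+1)}\cdot 2^{n+2}=2^{n/2+1}$, so folding this into the $2^{-3}$ (odd) or $2^{-4}$ (even) constant produces $2^{n/2-2}$ resp.\ $2^{n/2-3}$, which is exactly the stated $f_{AI}(n)$. For AII, the lattice-dependent factors contribute $2^{n/2+1}\cdot 3^{-1}2^6$; comparing with the AI computation this is $3^{-1}2^6$ times the AI contribution of $2^{n/2+1}$, so indeed $f_{AII}(n)=3^{-1}\cdot 2^6\cdot f_{AI}(n)$, and the asserted inequality ${\HM}(L,K)<f_\ast(n)\cdot|A_L|^{-1/2}$ follows. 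The $|A_L|^{-1/2}$ is untouched throughout since it already sits outside everything in the intermediate lemmas.

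There is no genuine obstacle left here --- the real work is already done in Lemma~\ref{C2 ratio A} (the delicate Jordan-component bookkeeping at $p=2$, where one must track how adjoining $\langle-1\rangle$ or $\langle\epsilon\rangle$ to $K_{2,1}$ perturbs $s_2,s_2',w_2,q$ and the $E_{2,j}$ and $P_2$ factors) and in the intermediate estimates of \S\ref{ssec:initial estimate} (bounding $G$, $H/G$, the Dirichlet $L$-value, and the $F$-ratio by zeta values). The only thing to be careful about in this final step is that the factors of $2$ from the discriminant ratio and from $C_2(K)/C_2(L)$ are combined correctly, and that one uses the odd/even split consistently: apply Lemma~\ref{initial estimate odd} with the odd-$n$ values of $|B_{n+1}|$, $((n+1)/2)!$, $\zeta((n+1)/2)$, and Lemma~\ref{initial estimate even} with the even-$n$ values, never mixing the two. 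With that, the proposition is immediate.
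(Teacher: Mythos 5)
Your proposal is correct and is exactly the paper's argument: the proposition is obtained by substituting the discriminant ratios \eqref{eqn:discri ratio A}, the identity \eqref{eqn:Cp type A}, and Lemma \ref{C2 ratio A} into Lemmas \ref{initial estimate odd} and \ref{initial estimate even}, and your power-of-two bookkeeping ($2^{-(n/2+1)}\cdot 2^{n+2}=2^{n/2+1}$ for AI, with the extra factor $3^{-1}\cdot 2^{6}$ for AII) matches the stated $f_{AI}(n)$ and $f_{AII}(n)$.
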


\subsection{Estimate of type B}\label{ssec:type B estimate}

Let the stably reflective vector $l\in L$ be of type BI, BII or BIII. 
In particular, $A_L\simeq{\Z}/D\oplus({\Z}/2)^m$ with $D>2$. 
We continue the estimate of \S \ref{ssec:initial estimate} in these cases. 
First recall from Table \ref{summary table} that 
\begin{equation}\label{eqn:discri ratio B}
|A_K|/|A_L| = 1/D, \; \; 4/D, \; \; 2/D 
\end{equation}
in the BI, BII, BIII cases respectively. 

Notice that $A_K$ is 2-elementary and $L$ contains ${\Z}l \oplus K$ with index $\leq2$ in every case.  
Therefore, for $p|D$ with $p>2$, we have the splitting 
\begin{equation*}\label{eqn:Zp split type B}
L\otimes{\Zp} = {\Z}_pl \oplus (K\otimes{\Zp}) 
\end{equation*}
with $K\otimes{\Zp}$ unimodular and $(l, l)=-D$ or $-2D$. 
In particular, $C_p(K)=1$ for $p>2$. 
We can also explicitly calculate $C_p(L)$ for $p>2$. 
Let $p^{\mu}||D$. 
Then we see that  
$s_p(L) = 2$, 
$w_p(L) = \mu$ and  
\begin{equation*}
P_p([n_{p,\mu}(L)/2]) = 1+\chi(L_{p,\mu})p^{-n_{p,\mu}(L)/2} = 1. 
\end{equation*}
Hence 
$C_p(L) = 2^{-1}\cdot p^{-\mu}$. 
If we write $D=2^{\nu}\cdot D_{\ne2}$ with $D_{\ne2}$ odd, then  
\begin{equation}\label{eqn:Codd type B}
\prod_{p>2} C_p(K)/C_p(L) = 2^{\omega(D_{\ne2})} \cdot D_{\ne2} 
\end{equation}
where $\omega(D_{\ne2})$ is the number of prime divisors of $D_{\ne2}$. 
The remaining term to estimate is $C_2(K)/C_2(L)$.

\begin{lemma}\label{C2 ratio B}
Let $2^{\nu}||D$. 
For type $*=$ BI, BII we have 
\begin{equation*}\label{eqn:C2 ratio B}
\frac{C_2(K)}{C_2(L)} \leq 
\begin{cases} 
\: 2^{m+3+\nu},           &  *= \textrm{BI}, \\ 
\: 3^{-1}\cdot2^{\nu+4},       &  *= \textrm{BII}, 
\end{cases}
\end{equation*}
and for type BIII we have 
$C_2(K)/C_2(L)=2^{-1}$. 
\end{lemma}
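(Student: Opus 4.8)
The plan is to compute $C_2(K)/C_2(L)$ directly from the definition of the $2$-adic local factor, using the explicit relation between the Jordan decompositions of $L\otimes\Z_2$ and $K\otimes\Z_2$ established in \S\ref{sec:sum over branch div}. Recall that for each type B vector $l$ the lattice $L$ contains $\Z l\oplus K$ with index $1$ or $2$, and that $A_K$ is $2$-elementary. Since $(l,l)=-D$ (types BI, BII) or $-2D$ (type BIII) with $2^{\nu}\|D$, the vector $l$ contributes a rank-one $2$-adic summand concentrated in Jordan level $\nu$ (types BI, BII) or level $\nu+1$ (type BIII), and the gluing of Lemma \ref{Nikulin gluing} only perturbs that same level and the adjacent even/odd pattern. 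So I would first record, exactly as in the proof of Lemma \ref{C2 ratio A}, the comparison $L_{2,j}\simeq K_{2,j}$ for all $j$ outside a bounded window around level $\nu$ (resp.\ $\nu+1$), together with the precise description of $L_{2,\nu}$ as (up to unit) $\langle\pm 1\rangle\oplus K_{2,\nu}$ or the index-$2$ analogue.

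Next I would turn each ingredient of $C_2 = 2^{1-s_2-s_2'-w_2+q}\prod_j P_2(n_{2,j}^+/2)^{-1}E_{2,j}$ into an inequality. The exponent bookkeeping is the same kind as in Lemma \ref{C2 ratio A}: $s_2(L)\le s_2(K)+O(1)$, $s_2'(L)\le s_2'(K)+O(1)$, while $w_2$ changes by the sum $\sum_{j>0}n_{2,j}$ restricted to the affected levels, which since $A_K$ is $2$-elementary and $A_L\simeq\Z/D\oplus(\Z/2)^m$ is bounded linearly in $\nu$ and $m$ (for type BI the $(\Z/2)^m$ part lives at positive level and produces the $2^m$, for types BII and BIII it is absorbed into $A_K$). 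The $q$-term shift is controlled by $q_j(L)\ge n_{2,j}(L)$ on odd levels as before. The $P_2$-product over positive levels is $\le 1$ for BI and $\le 4/3$ for BII exactly as in the AII computation (a single geometric factor $1-2^{-n^+-2}\ge 3/4$), and there are at most two modified $E_{2,j}$ factors, each $\le 2$. For BIII the computation is even simpler: $|A_K|=2$, $K_{2,j}$ differs from $L_{2,j}$ only by splitting off the rank-one level-$(\nu+1)$ piece carrying $A_L\simeq\Z/D$, all the positive-level $2$-adic data of $K$ is trivial, and one checks $C_2(K)/C_2(L)=2^{-1}$ on the nose by plugging in $s_2,s_2',w_2,q$ for a lattice with $A_K\simeq\Z/2$.

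Assembling the exponents of $2$ and the bounded rational factors gives the stated bounds $2^{m+3+\nu}$, $3^{-1}2^{\nu+4}$, $2^{-1}$; I would present this as a short case table rather than prose, tracking $-s_2-s_2'-w_2+q$ and the $P_2,E_2$ contributions in parallel for BI and BII and then doing BIII separately.

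The main obstacle is purely combinatorial rather than conceptual: keeping the signs and the "one or two affected Jordan levels" bookkeeping correct when $\nu=1$ versus $\nu>1$ (the two regimes in Lemma \ref{A_K BII}, which change whether the gluing element has integral norm and hence whether an extra odd level appears), and making sure every over-estimate stays consistent with the $2^{m+\cdots}$ powers that must later cancel against $e_{\ast}(n)$ in Proposition \ref{sum over branch div}. I do not expect any genuinely hard step; the care is entirely in the case analysis.
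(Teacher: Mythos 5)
Your plan is essentially the paper's proof: compare Jordan decompositions of $L\otimes\Z_2$ and $K\otimes\Z_2$ level by level, bound each constituent of $C_2$ (the shifts in $s_2$, $s_2'$, $w_2$, $q$, the $P_2$-product, the $E_{2,j}$-product) exactly as in Lemma \ref{C2 ratio A}, and multiply; the BIII case is handled by the same direct computation ($L\otimes\Z_2$ unimodular, so $C_2(L)=1$ and $C_2(K)=2^{-1}$ — note it is $L$, not $K$, whose positive-level $2$-adic data is trivial). One constant does not close as you have stated it: "at most two modified $E_{2,j}$ factors, each $\leq 2$" gives $\prod_{j>0}E_{2,j}(K)/E_{2,j}(L)\leq 4$, whereas the paper's assembly uses $\leq 2$ for this product (obtainable by observing that the level-$\nu$ factor $E_{2,\nu}(L)\in\{1,2\}$ only helps, and that the level-$1$ factors of $L$ and $K$ come from the same unimodular piece and so can differ only when exactly one of them degenerates to $1$); with your bound the BI estimate lands at $2^{m+\nu+4}$ rather than $2^{m+\nu+3}$. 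This is a constant-tracking issue, not a conceptual one, but since the lemma asserts specific constants you would need to tighten that step.
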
 

\begin{proof}
In the BIII case, $L\otimes{\Z}_2$ is unimodular so that $C_2(L)=1$. 
Since $A_K\simeq{\Z}/2$, it is easy to calculate that $C_2(K)=2^{-1}$. 
The calculations in the BI, BII cases are similar to Lemma \ref{C2 ratio A}: 
we can compare appropriate Jordan decompositions of $L\otimes{\Z}_2$ and $K\otimes{\Z}_2$ to estimate 
\begin{equation*}
s_2(L) - s_2(K) \leq 1,  
\end{equation*}
\begin{equation*}
s_2'(L) - s_2'(K) \leq 2,   
\end{equation*}
\begin{equation*}
w_2(L)-w_2(K) = 
\begin{cases} 
\: \; \; m+\nu,       &  *= \textrm{BI}, \\ 
\:    \nu-m-3,        &  *= \textrm{BII}, 
\end{cases}
\end{equation*}
\begin{equation}\label{eqn:overestimate 3}
q(K) - q(L) \leq 
\begin{cases} 
\: \; \;    -1,       &  *= \textrm{BI}, \\ 
\:      m+1,        &  *= \textrm{BII}, 
\end{cases} 
\end{equation}
\begin{equation*}
\prod_{j>0} \frac{P_2(n_{2,j}^+(L)/2)}{P_2(n_{2,j}^+(K)/2)} 
= \frac{P_2(n_{2,1}^+(L)/2)}{P_2(n_{2,1}^+(K)/2)} \leq 
\begin{cases} 
\: \; \;    1,                    &  *= \textrm{BI}, \\ 
\:  (1-2^{-2})^{-1},        &  *= \textrm{BII}, 
\end{cases}  
\end{equation*}
\begin{equation*}
\prod_{j>0}E_{2,j}(K)/E_{2,j}(L) \leq 2. 
\end{equation*}
\end{proof}

Substituting \eqref{eqn:discri ratio B}, \eqref{eqn:Codd type B} and Lemma \ref{C2 ratio B} with $m+3\leq n$ into 
Lemmas \ref{initial estimate odd} and \ref{initial estimate even}, 
we obtain  

\begin{proposition}\label{vol estimate B}
Let the stably reflective vector $l\in L$ be of type $*=$ BI, BII or BIII. 
We define functions $g_{BI}(n)$, $g_{BII}(n)$ and $g_{BIII}(n)$ by 
\begin{equation*}\label{eqn:fBI}
g_{BI}(n) = 
\begin{cases} 
2^{n-4}\cdot \pi^{(-n+7)/2} \cdot |B_{n+1}|^{-1} \cdot ((n+1)/2)! \cdot \zeta((n+1)/2)   &  n:  \textrm{odd}, \\ 
2^{n-5}\cdot \pi^{n/2+5} \cdot (n/2)!^{-1} \cdot \zeta(n/2+1)                                     & n:  \textrm{even}. 
\end{cases}
\end{equation*}
\begin{equation*}\label{eqn:fBII}
g_{BII}(n) = 3^{-1} \cdot 2^6 \cdot g_{BI}(n), 
\end{equation*}
\begin{equation*}\label{eqn:fBII}
g_{BIII}(n) = 2^{1-n/2} \cdot g_{BI}(n), 
\end{equation*}
with $g_{BIII}(n)$ defined only for even $n$. 
Then  we have 
\begin{equation*}
{\HM}(L, K) < g_{*}(n)\cdot D^{-n/2} \cdot 2^{\omega(D)} \cdot |A_L|^{-1/2}. 
\end{equation*}
\end{proposition}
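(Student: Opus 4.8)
The plan is to derive Proposition~\ref{vol estimate B} by feeding the type-B data assembled above into the uniform intermediate bounds of Lemmas~\ref{initial estimate odd} and~\ref{initial estimate even}. Those lemmas already factor $\HM(L,K)$ as a purely analytic prefactor (depending only on $n$ and its parity: the $\pi$-power, $|B_{n+1}|^{-1}$ or $(n/2)!^{-1}$, the factorial, and a $\zeta$-value) times $(|A_K|/|A_L|)^{n/2+1}$ times $|A_L|^{-1/2}$ times $\prod_p C_p(K)/C_p(L)$. So the whole proof amounts to bounding the last two lattice-theoretic factors for each of BI, BII, BIII and checking that the $D$-dependence collapses to exactly $D^{-n/2}$.

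First I would compute $\prod_p C_p(K)/C_p(L)$. For $p>2$ it is given on the nose by \eqref{eqn:Codd type B} as $2^{\omega(D_{\ne2})}D_{\ne2}$, and the $p=2$ factor is bounded by Lemma~\ref{C2 ratio B}: $\leq 2^{m+3+\nu}$ for BI, $\leq 3^{-1}2^{\nu+4}$ for BII, and $=2^{-1}$ for BIII. Multiplying, and using $D=2^{\nu}D_{\ne2}$ together with $2^{\omega(D_{\ne2})}=2^{\omega(D)-1}$ in the BI, BII cases (where $D$ is even, so $\nu\geq1$) and $2^{\omega(D_{\ne2})}=2^{\omega(D)}$ in the BIII case (where $D$ is odd, $\nu=0$, $D_{\ne2}=D$), one repackages $\prod_p C_p(K)/C_p(L)$ as $D\cdot 2^{\omega(D)}$ times a pure power of $2$. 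In the BI case that power of $2$ still carries a factor $2^{m}$, which I would absorb via $m+3\leq n$ as flagged just before the statement; in the BII and BIII cases the $m$'s already cancelled in Lemma~\ref{C2 ratio B} (for BII because $w_2(L)-w_2(K)=\nu-m-3$ is cancelled by $q(K)-q(L)\leq m+1$ from \eqref{eqn:overestimate 3}).

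Next I would substitute the discriminant ratio \eqref{eqn:discri ratio B}, namely $|A_K|/|A_L|=1/D,\,4/D,\,2/D$ in the BI, BII, BIII cases, so that $(|A_K|/|A_L|)^{n/2+1}$ contributes $D^{-n/2-1}$ times $1$, $2^{n+2}$, $2^{n/2+1}$ respectively. Combining with the previous paragraph, the single surplus factor $D$ cancels one factor of $D^{-1}$, leaving precisely $D^{-n/2}\cdot 2^{\omega(D)}\cdot|A_L|^{-1/2}$; all the remaining powers of $2$ together with the analytic prefactor from Lemma~\ref{initial estimate odd} or~\ref{initial estimate even} collect into $g_{BI}(n)$, $g_{BII}(n)=3^{-1}2^{6}g_{BI}(n)$, $g_{BIII}(n)=2^{1-n/2}g_{BI}(n)$ exactly as defined (one checks, e.g., $2^{-3}\cdot2^{m+3}\cdot2^{-1}\cdot2^{\nu}\cdot2^{-\nu}$-type bookkeeping yields $2^{n-4}$ for odd $n$ in the BI case). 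The strict inequality is inherited from Lemmas~\ref{initial estimate odd} and~\ref{initial estimate even}.

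There is no conceptual obstacle here: all the genuinely delicate work — the Jordan-form and Kawauchi–Kojima bookkeeping behind Lemma~\ref{C2 ratio B}, and the Bernoulli/$\zeta$ analytic estimates behind the intermediate lemmas — is already done. The only point requiring care is the accounting of powers of $2$: keeping straight how $2^{\nu}$, $2^{\omega(D_{\ne2})}$, the $2^{m}$ surviving in $C_2(K)/C_2(L)$ for type BI, and the $4^{n/2+1}=2^{n+2}$ (resp. $2^{n/2+1}$) from $|A_K|/|A_L|$ in type BII (resp. BIII) combine, and confirming that after applying $m+3\leq n$ the exponent is \emph{exactly} the one recorded in $g_{\ast}(n)$. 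I would therefore treat BI, BII, BIII in turn, splitting into odd and even $n$ only where the analytic prefactor differs.
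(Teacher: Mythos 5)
Your proposal is correct and follows exactly the paper's route: the paper's proof is precisely the substitution of \eqref{eqn:discri ratio B}, \eqref{eqn:Codd type B} and Lemma \ref{C2 ratio B} (with $m+3\leq n$ used for the surviving $2^{m}$ in type BI) into Lemmas \ref{initial estimate odd} and \ref{initial estimate even}. Your power-of-$2$ bookkeeping checks out in all three cases and both parities.
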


If we further make the (over)estimate 
\begin{equation}\label{eqn:overestimate 4}
D^{-n/2} \cdot 2^{\omega(D)} < D^{-n/2+1} \leq 3^{-n/2+1},  
\end{equation}
we can also obtain an expression only in terms of $n$ and $|A_L|$. 

\begin{corollary}\label{estimate B by n AL}
We set $f_{*}(n)=3^{-n/2+1}\cdot g_{*}(n)$ for $*=$ BI, BII, BIII. 
Then 
\begin{equation*}
{\HM}(L, K)  <  f_{*}(n)\cdot |A_L|^{-1/2}. 
\end{equation*}
\end{corollary}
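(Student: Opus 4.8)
The target statement, Corollary \ref{estimate B by n AL}, is an immediate consequence of Proposition \ref{vol estimate B} together with the numerical inequality \eqref{eqn:overestimate 4}. So the plan is simply to combine the two. First I would recall that Proposition \ref{vol estimate B} gives, for a stably reflective vector $l\in L$ of type $*=$ BI, BII, BIII with orthogonal complement $K=l^{\perp}\cap L$,
\begin{equation*}
{\HM}(L, K) < g_{*}(n)\cdot D^{-n/2}\cdot 2^{\omega(D)}\cdot |A_L|^{-1/2},
\end{equation*}
where $D$ is the invariant attached to $A_L\simeq{\Z}/D\oplus({\Z}/2)^m$ as in Proposition \ref{classify stab ref vect} case (B), and $\omega(D)$ is the number of prime divisors of $D$.

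**The key step.** The factor $D^{-n/2}\cdot 2^{\omega(D)}$ still depends on $D$, whereas we want a bound purely in $n$ and $|A_L|$. I would bound it from above by $3^{-n/2+1}$. Indeed, $2^{\omega(D)}\le D$ always (the number of squarefree divisors of $D$ is at most $D$), so $D^{-n/2}\cdot 2^{\omega(D)}\le D^{-n/2+1}$; and since in case (B) we have $D>2$, i.e. $D\ge3$ — here one should note $D$ is an integer, $D\ge4$ in cases BI and BII and $D\ge3$ in case BIII, so $D\ge3$ in all cases — and since $-n/2+1<0$ for $n\ge15$, the function $D\mapsto D^{-n/2+1}$ is decreasing, giving $D^{-n/2+1}\le 3^{-n/2+1}$. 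This is precisely \eqref{eqn:overestimate 4}. Substituting this into the estimate of Proposition \ref{vol estimate B} yields
\begin{equation*}
{\HM}(L, K) < 3^{-n/2+1}\cdot g_{*}(n)\cdot |A_L|^{-1/2} = f_{*}(n)\cdot |A_L|^{-1/2}
\end{equation*}
with $f_{*}(n):=3^{-n/2+1}\cdot g_{*}(n)$, as claimed.

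**Where the real work is.** There is no genuine obstacle in deriving the corollary itself — it is a one-line consequence once Proposition \ref{vol estimate B} is in hand. The only point requiring the tiniest care is that $-n/2+1$ must be negative for the monotonicity argument in \eqref{eqn:overestimate 4} to go the right way, which holds in our range $n\ge15$; for small $n$ one would simply not need this corollary. The substantive content — establishing Proposition \ref{vol estimate B}, hence Lemma \ref{C2 ratio B} and the common estimates of \S\ref{ssec:initial estimate} — lies upstream and is the genuinely laborious part of \S\ref{sec:vol ratio}; the present statement is just the packaging that feeds cleanly into \eqref{eqn:vol ratio} of Proposition \ref{vol ratio} and thence into \eqref{eqn:final estimate for bigness}.
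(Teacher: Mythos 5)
Your proposal is correct and is essentially identical to the paper's own derivation: the corollary is obtained by substituting the overestimate $D^{-n/2}\cdot 2^{\omega(D)} < D^{-n/2+1} \leq 3^{-n/2+1}$ (valid since $2^{\omega(D)}<D$ and $D\geq 3$ in all B cases) into Proposition \ref{vol estimate B}. Your added remarks on why $D\geq 3$ and on the sign of $-n/2+1$ are accurate but not needed beyond what the paper already records in \eqref{eqn:overestimate 4}.
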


This completes the proof of Proposition \ref{vol ratio}.

\begin{remark}\label{effective bigness}
The estimates in \S \ref{sec:sum over branch div} and \S \ref{sec:vol ratio} 
are derived uniformly so that they would be overrating for many lattices. 
In order to make Theorem \ref{bigness} as effective as possible, 
one should remember where to improve them. 
In particular, notice that 
\begin{itemize}
\item The bounds  
\eqref{eqn:overestimate 5}, \eqref{eqn:overestimate 1}, \eqref{eqn:q estimate AI}, \eqref{eqn:overestimate 2}, 
\eqref{eqn:q estimate AII}, \eqref{eqn:overestimate 3}, \eqref{eqn:overestimate 4}  
and the inequality $m+3\leq n$ substituted in Lemmas \ref{estimate ratio BII} and \ref{C2 ratio B} 
might be sometimes too overestimating. 
Some of them could be largely reduced when the class of lattices is specified. 
\item It is also useful to observe that 
$e_{AII}(n)f_{AII}(n)$ (or its improvement) is much greater than other $e_{*}(n)f_{*}(n)$ as $n$ grows.  
Thus it gives the main term in the estimate of \eqref{eqn:(3.3)} if $n$ is not so small. 
This is first observed in \cite{G-H-S3} in a special case. 
\item Actually, for majority of lattices we have only branch divisor of type AII. 
Reflections of type B take place only for special type of lattices. 
\end{itemize}
For instance, if we restrict to maximal lattices $L$, we can reduce the estimate to the approximate form 
\begin{equation*}
\eqref{eqn:(3.3)}  \leq [ \, C\cdot(4\pi e/n)^{(n+1)/2} + \textrm{"error term"} \, ] \cdot |A_L|^{-1/2} 
\end{equation*}
where $C=(\pi/2)^4\cdot e^{-1/2}$, with the aid of Stirling's formula. 
\end{remark}


\appendix

\section{Quasi-cyclic forms}\label{sec:appendix}

By a \textit{finite quadratic form} we mean a finite abelian group $A$ endowed with a quadratic form $A\to{\Q}/2{\Z}$ 
which we assume to be nondegenerate throughout. 
We say that a finite quadratic form is \textit{quasi-cyclic} if any isotropic subgroup of it is cyclic. 
This class of quadratic forms obviously contains the anisotropic ones. 
In this appendix we classify quasi-cyclic forms (\S \ref{ssec:classify quasi-cyclic}), 
and present an ``economic'' method to produce such a form out of a given quadratic form (\S \ref{ssec:quasi-cyclification}).  
This section may be read independently of other sections. 
The results are used in \S \ref{ssec:reduction}, \S \ref{decomposition} and \S \ref{ssec:prf eff}, 
where even lattices with quasi-cyclic discriminant form play a central role.

Here is our basic reduction: 
if $A = \oplus_{p} A_p $
is the decomposition into $p$-parts of a finite quadratic form $A$, 
it is easy to see that $A$ is quasi-cyclic if and only if $A_p$ is so for each $p$. 
Thus we may restrict our attention to quasi-cyclic forms on $p$-groups.

\subsection{Classification}\label{ssec:classify quasi-cyclic}

\subsubsection{The odd prime case}

Let $p>2$. 
We can (and do) identify ${\Q}/2{\Z}$-valued quadratic forms on $p$-groups with 
${\Q}/{\Z}$-valued symmetric bilinear forms. 
If $\varepsilon\in{\Zpuni}$ and $k\in{\N}$, 
we write $\langle \varepsilon/p^k\rangle$ for the form on ${\Z}/p^k$ in which the natural generator has norm $\varepsilon/p^k$. 
It is well-known that quadratic forms on $p$ -groups are orthogonal direct sums of these cyclic forms (\cite{Wa}). 
Recall also that anisotropic forms on $p$-groups are either   
\begin{itemize}
\item $\langle \varepsilon/p\rangle$,  
\item $\langle 1/p\rangle \oplus \langle -\varepsilon_0/p\rangle$ where $\varepsilon_0 \notin ({\Zpuni})^2$. 
\end{itemize}
In particular, they are $p$-elementary of length $\leq2$. 
Quasi-cyclic forms can be classified as follows. 

\begin{proposition}\label{q-cyclic p>2}
A quasi-cyclic form on a $p$-group with $p>2$ is isometric to one of the following types of quadratic form. 
\begin{enumerate}
\item $\langle \varepsilon/p^k\rangle \oplus A'$ with $k>1$ and $A'$ anisotropic; 
\item $(\langle 1/p\rangle \oplus \langle -1/p\rangle)^l \oplus A'$ with $l\leq1$ and $A'$ anisotropic.  
\end{enumerate}
Conversely, these forms are always quasi-cyclic. 
\end{proposition}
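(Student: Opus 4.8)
The plan is to reformulate quasi-cyclicity as a statement about small totally isotropic subgroups and then run a short case analysis on top of the structure theory of $p$-adic discriminant forms. \emph{First}, I would record two trivial observations: a finite abelian $p$-group is cyclic precisely when its $p$-torsion subgroup has rank $\le 1$, and every subgroup of a totally isotropic subgroup is again totally isotropic. Together these show that a finite quadratic form on a $p$-group is quasi-cyclic if and only if it contains no totally isotropic subgroup isomorphic to $({\Z}/p)^2$. This is the only reformulation needed, and it is the conceptual key: the whole proposition becomes a search for hyperbolic planes over ${\mathbb F}_p$.

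\emph{For the necessity} (a quasi-cyclic form is of type (1) or (2)), I would fix a Wall decomposition $A=\bigoplus_i\langle\varepsilon_i/p^{k_i}\rangle$ with generators $g_i$, viewing the form as ${\Q}/{\Z}$-valued as in the statement. For each $i$ with $k_i\ge2$ the socle generator $z_i=p^{k_i-1}g_i$ has order $p$ and norm $p^{k_i-2}\varepsilon_i\equiv0$ in ${\Q}/{\Z}$, hence is isotropic. If two distinct indices had $k_i,k_j\ge2$, then $\langle z_i,z_j\rangle\simeq({\Z}/p)^2$ would be totally isotropic (the cross term vanishes, the summands being orthogonal), contradicting quasi-cyclicity; so at most one $k_i$ exceeds $1$. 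Write $A=C\oplus E$ with $C$ the (possibly trivial) cyclic summand carrying that exponent and $E$ the $p$-elementary remainder. If $C\ne0$, then $E$ must be anisotropic, for otherwise a nonzero isotropic $w\in E$ together with the socle generator $z$ of $C$ would span a totally isotropic $({\Z}/p)^2$; this puts $A$ in case (1). If $C=0$, then $A=E$ is $p$-elementary, so by the Witt decomposition over ${\mathbb F}_p$ it is an orthogonal sum of hyperbolic planes (each isometric to $\langle1/p\rangle\oplus\langle-1/p\rangle$) and its anisotropic kernel; quasi-cyclicity forbids Witt index $\ge2$, so the number $l$ of hyperbolic planes is $\le1$, giving type (2).

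\emph{For the converse}, in a form $A=C\oplus A'$ of type (1) with $C$ cyclic and $A'$ anisotropic, any totally isotropic $H\subset A$ has $H\cap A'$ totally isotropic inside the anisotropic form $A'$, hence $H\cap A'=0$, so the projection $H\to C$ is injective and $H$ is cyclic. In a form $A\simeq(\langle1/p\rangle\oplus\langle-1/p\rangle)^l\oplus A'$ of type (2) with $l\le1$ and $A'$ anisotropic, the anisotropic kernel of $A$ is $A'$, so the Witt index of $A$ equals $l\le1$; hence every totally isotropic subgroup has order $\le p$ and is cyclic. This establishes that both listed families consist of quasi-cyclic forms.

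I do not expect a real obstacle here: once quasi-cyclicity has been translated into the absence of a totally isotropic $({\Z}/p)^2$, everything reduces to Wall's classification of quadratic forms on $p$-groups and the elementary Witt theory of ${\mathbb F}_p$-forms (anisotropic forms have rank $\le2$; Witt index $\ge2$ produces a totally isotropic $2$-plane). The one spot that genuinely needs care is checking that the socle elements $p^{k_i-1}g_i$ are honestly isotropic, which is where both the ${\Q}/{\Z}$-normalization of the form and the hypothesis $k_i\ge2$ are used.
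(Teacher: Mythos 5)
Your proof is correct, and while the necessity half runs along the same lines as the paper's, your converse for type (1) takes a genuinely different and cleaner route. For necessity, both arguments rest on Wall's diagonalization together with the observation that a cyclic summand $\langle\varepsilon/p^k\rangle$ with $k\ge2$ carries a nontrivial isotropic element, so that a second such summand, or a nonzero isotropic vector in the complement, would yield a totally isotropic $({\Z}/p)^2$; your preliminary reduction of quasi-cyclicity to the absence of a totally isotropic $({\Z}/p)^2$ is legitimate because ``isotropic subgroup'' here means, as in Nikulin's overlattice correspondence, a subgroup on which the form vanishes identically. The divergence is in the converse for $\langle\varepsilon/p^k\rangle\oplus A'$: the paper classifies the isotropic elements explicitly, splitting into the cases $k$ even and $k=2N+1$ odd, and derives a contradiction from a hypothetical pair of orthogonal isotropic elements neither lying in the span of the other. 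Your projection argument --- a totally isotropic $H$ meets the anisotropic summand $A'$ trivially, hence injects into the cyclic summand under the coordinate projection --- replaces that entire case analysis with two lines, and in fact proves the stronger statement that $C\oplus A'$ is quasi-cyclic for \emph{any} cyclic $C$ and anisotropic $A'$, with no use of the parity of $k$ or even of $k>1$. For type (2) you invoke Witt decomposition over ${\mathbb F}_p$ to bound every totally isotropic subgroup by the Witt index $l\le1$, which is exactly the content behind the paper's one-word ``trivial''. Net effect: same necessity proof, a shorter and slightly more general converse.
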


\begin{proof}
Let $A$ be a quasi-cyclic form on a $p$-group. 
When $A$ is not $p$-elementary, we have an orthogonal splitting 
$A\simeq \langle \varepsilon/p^k\rangle\oplus A'$ with $k>1$. 
Since $\langle \varepsilon/p^k\rangle$ contains a nontrivial isotropic subgroup, $A'$ must be anisotropic. 
When $A$ is $p$-elementary, it can be written in the form 
$(\langle 1/p\rangle \oplus \langle -1/p\rangle)^l \oplus A'$ for some $l\geq0$ and anisotropic $A'$. 
The quasi-cyclic condition implies $l\leq1$. 

Conversely, we show that the above forms are quasi-cyclic. 
This is trivial in the case (2). 
Consider $A=\langle \varepsilon/p^k\rangle \oplus A'$ with $k>1$ and $A'$ anisotropic. 
When $k$ is even, every isotropic element of $A$ must be contained in $\langle \varepsilon/p^k\rangle$ 
because nonzero elements of $A'$ have norm in $p^{-1}{\Zpuni}$ 
while elements of $\langle \varepsilon/p^k\rangle$ never have such norm. 
Next let $k=2N+1$ be odd. 
Choose a generator $x_0$ of $\langle \varepsilon/p^k\rangle$. 
By a similar consideration, isotropic elements of $A$ are either 
(i) contained in $\langle p^{N+1}x_0\rangle$ or 
(ii) of the form $x_1+x_2$ with $x_1$ generating $\langle p^Nx_0\rangle$ and $0\ne x_2\in A'$. 
Assume to the contrary that 
there were two isotropic elements $x, x'$ with $(x, x')=0$ and $x\not\in \langle x'\rangle$, $x'\not\in \langle x\rangle$. 
Then both $x$ and $x'$ should be of type (ii), 
so we write $x=x_1+x_2$ and $x'=x'_1+x'_2$ as above. 
Multiplying $x$ by an element of ${\Zpuni}$, we may assume $x_1=x_1'$. 
From the relation $(x, x)=(x', x')=(x, x')=0$, we can calculate that $(x_2-x_2', x_2-x_2')=0$. 
But since $A'$ is anisotropic, we must have $x_2=x_2'$. 
This is a contradiction. 
\end{proof}

\begin{corollary}\label{q-cyclic length p>2}
Quasi-cyclic forms on $p$-groups with $p>2$ have length $\leq4$. 
\end{corollary}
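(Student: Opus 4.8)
The plan is to read off the bound directly from the classification in Proposition \ref{q-cyclic p>2}. Since a quasi-cyclic form on a $p$-group with $p>2$ is one of the two listed types, I would bound the length of each type separately and take the maximum. Recall that, for $p>2$, an anisotropic form $A'$ is either $\langle \varepsilon/p\rangle$ (length $1$) or $\langle 1/p\rangle \oplus \langle -\varepsilon_0/p\rangle$ (length $2$), so $A'$ always has length $\leq 2$.

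For type (1), the form is $\langle \varepsilon/p^k\rangle \oplus A'$ with $k>1$ and $A'$ anisotropic; the cyclic summand contributes length $1$ and $A'$ contributes length $\leq 2$, so the total length is $\leq 3$. For type (2), the form is $(\langle 1/p\rangle \oplus \langle -1/p\rangle)^l \oplus A'$ with $l\leq 1$ and $A'$ anisotropic; the first summand contributes length $\leq 2$ (as $l\leq 1$) and $A'$ contributes length $\leq 2$, so the total length is $\leq 4$. Taking the larger of the two bounds gives length $\leq 4$, which is exactly the assertion.

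There is essentially no obstacle here: the corollary is a direct bookkeeping consequence of the structure theorem just proved, and the only input needed beyond Proposition \ref{q-cyclic p>2} is the elementary fact (recalled in the text immediately before that proposition) that anisotropic forms on $p$-groups with $p>2$ have length $\leq 2$. I would simply write:

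\begin{proof}
This is immediate from Proposition \ref{q-cyclic p>2}: an anisotropic form on a $p$-group with $p>2$ has length $\leq2$, so a form of type (1) has length $\leq 1+2=3$ and a form of type (2) has length $\leq 2+2=4$.
\end{proof}
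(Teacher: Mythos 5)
Your proof is correct and is exactly the intended argument: the paper states the corollary without proof because it follows immediately from Proposition \ref{q-cyclic p>2} together with the fact, recalled just before that proposition, that anisotropic forms on $p$-groups with $p>2$ have length $\leq 2$. Your bookkeeping ($\leq 3$ for type (1), $\leq 4$ for type (2)) is accurate.
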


\subsubsection{The $p=2$ case}

We keep the notation $\langle \varepsilon/2^k\rangle$ for quadratic forms on ${\Z}/2^k$ where $\varepsilon\in{\Z}_2^{\times}$. 
We shall especially write $a=\langle1/2\rangle$ and $b=\langle-1/2\rangle$. 
We also denote by $u_k$, $v_k$ the quadratic forms on $({\Z}/2^k)^2$ given by Gram matrices 
$2^{-k} \begin{pmatrix}  0 & 1 \\ 1 & 0 \end{pmatrix}$, 
$2^{-k} \begin{pmatrix}  2 & 1 \\ 1 & 2 \end{pmatrix}$ respectively. 
It is known (\cite{Wa}) that quadratic forms on 2-groups are orthogonal direct sums of 
$u_k$, $v_k$ and cyclic forms $\langle \varepsilon/2^k\rangle$, 
with various relations among these generators. 
In particular, they can be written as 
\begin{equation}\label{eqn:normal form p=2}
\bigoplus_{k>0}u_k^l\oplus v_k^m\oplus A_k 
\end{equation}
where $m\leq1$ and $A_k$ is a sum of at most two cyclic forms of order $2^k$. 
Anisotropic forms are exactly the following types of quadratic form: 
\begin{itemize}
\item $\langle \varepsilon/4\rangle \oplus \langle \varepsilon'/2\rangle$, 
\item $\langle \varepsilon/4\rangle$, 
\item the 2-elementary forms $a^l, b^l$ ($l\leq3$) and $v_1$. 
\end{itemize}

Now quasi-cyclic forms can be classified as follows. 

\begin{proposition}\label{q-cyclic p=2}
A quasi-cyclic form on a 2-group  is isometric to one of the following types of quadratic form.  
\begin{enumerate}
\item $\langle \varepsilon/2^k\rangle \oplus A'$ with $k>1$ and $A'$ anisotropic; 
\item $\langle \varepsilon/4 \rangle \oplus A'$ with $A'$ being 2-elementary of length $\leq3$;  
\item the 2-elementary forms $u_1$, $v_1$, $u_1\oplus v_1$ and $a^l\oplus b^m$ with $l+m\leq5$ and ${\min}(l, m)\leq1$.  
\end{enumerate}
Conversely, these forms are always quasi-cyclic. 
\end{proposition}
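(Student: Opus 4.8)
The plan is to follow the same two-pronged strategy used for odd primes in Proposition \ref{q-cyclic p>2}: first show that every quasi-cyclic form on a $2$-group is of one of the three listed shapes, and then verify that each listed form is indeed quasi-cyclic. For the first direction, I would start from the normal form \eqref{eqn:normal form p=2}, namely $A\simeq\bigoplus_{k>0}u_k^l\oplus v_k^m\oplus A_k$ with $m\le1$ and $A_k$ a sum of at most two cyclic forms of order $2^k$, and analyze the isotropic subgroups component by component. The key observation is that $u_k$ always contains a nontrivial isotropic cyclic subgroup (generated by one of the two standard basis vectors), and it contains a non-cyclic isotropic subgroup as soon as $k\ge2$ (namely $2^{k-1}$ times the span of both generators is isotropic and isomorphic to $(\Z/2)^2$), or as soon as a second copy of $u_k$ or a transverse isotropic line is present; so $u_1$ can appear at most once and only in the $2$-elementary case, and $u_k$ with $k\ge2$ never appears. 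Similarly $v_k$ with $k\ge2$ contains an isotropic $(\Z/2)^2$, so only $v_1$ survives, and $v_1$ already contains an isotropic line, forcing everything else to be anisotropic or tightly constrained. The bulk of the work is the $2$-elementary case: here the form is a sum of copies of $a$, $b$, $u_1$, $v_1$, and one must identify exactly which sums have only cyclic (i.e.\ order $\le2$) isotropic subgroups. This is where I expect the main combinatorial effort, and it should reduce to the statement in case (3): $u_1\oplus u_1\simeq v_1\oplus v_1$ and both contain an isotropic plane, $a\oplus b\simeq u_1$ contains an isotropic line so we cannot have two independent such, $a^2\oplus b$ contains no isotropic plane but $a^2\oplus b^2$ does, and the length bound $l+m\le5$ with $\min(l,m)\le1$ comes from the fact that $a^4$ or $b^4$ (via the Walls relations, e.g.\ $a^4\simeq b^4\oplus(\text{something})$) eventually forces an isotropic plane.

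For the case where $A$ is not $2$-elementary, the argument mirrors Proposition \ref{q-cyclic p>2}: split off the cyclic summand of maximal order, $A\simeq\langle\varepsilon/2^k\rangle\oplus A'$ with $k>1$. Since $\langle\varepsilon/2^k\rangle$ already contains a nonzero isotropic element ($2^{k-1}$ or $2^{\lceil k/2\rceil}$ times a generator), and since an isotropic element of $A'$ together with one of $\langle\varepsilon/2^k\rangle$ spans a non-cyclic isotropic subgroup whenever $A'$ has $2$-torsion not killed by the pairing, one deduces that $A'$ must be anisotropic (case (1)) unless the maximal cyclic order is exactly $4$, in which case a more delicate analysis — parallel to the odd-$k$ subcase in the proof of Proposition \ref{q-cyclic p>2}, using that isotropic elements of $\langle\varepsilon/4\rangle\oplus A'$ are constrained to lie in $\langle 2x_0\rangle$ or to have prescribed form $x_1+x_2$ — allows $A'$ to be $2$-elementary of length $\le3$ (case (2)). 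I would need the explicit list of anisotropic $2$-adic forms recalled just above the proposition to carry this out, together with the observation that $\langle\varepsilon/4\rangle$ has a single isotropic line and a vector of order $4$ whose square generates it.

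The converse direction is more mechanical: for each of the three families one checks directly that any isotropic subgroup is cyclic. In case (1) with $k$ even this is immediate from norm considerations (nonzero elements of the anisotropic part $A'$ have nonzero norm, so isotropic vectors lie in $\langle\varepsilon/2^k\rangle$, which is cyclic), and with $k$ odd one repeats verbatim the contradiction argument from the end of the proof of Proposition \ref{q-cyclic p>2}: two independent orthogonal isotropic vectors must both be of the ``mixed'' type $x_i+y_i$, one normalizes their $\langle\varepsilon/2^k\rangle$-components to agree, and then anisotropy of $A'$ forces the $A'$-components to agree too, a contradiction. In case (2) one uses that $A'$ is $2$-elementary so any isotropic subgroup meeting it nontrivially would have to be generated inside $\langle 2x_0\rangle\oplus A'$ by a vector of order $2$, and the quadratic form on that subspace is anisotropic except on a cyclic piece; in case (3) it is a finite check on $2$-elementary spaces of dimension $\le6$. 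I expect the main obstacle to be the $2$-elementary classification inside the forward direction — keeping careful track of the Walls isometry relations among $a$, $b$, $u_1$, $v_1$ to pin down exactly the constraints $l+m\le5$ and $\min(l,m)\le1$ (and the absence of $u_1^2$ or $v_1^2$ alongside further summands) — since the discriminant-form calculus over $\Z_2$ is notoriously full of exceptional coincidences.
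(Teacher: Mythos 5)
Your overall architecture coincides with the paper's: rule out $u_k,v_k$ for $k\geq2$ via the isotropic $(\Z/2)^2$ they contain, split off a cyclic summand of maximal order, force $A'$ anisotropic when $k\geq3$, handle $k=2$ and the $2$-elementary exponent separately, and prove the converse by the same two-orthogonal-isotropic-vectors contradiction as in Proposition~\ref{q-cyclic p>2}. But the step you defer as ``a more delicate analysis'' --- the case $k=2$ --- is exactly where the one genuinely new idea of the proof lives, and your proposed route into it rests on a false premise. You assert that $\langle\varepsilon/4\rangle$ ``has a single isotropic line''; it does not: it is anisotropic (the element $2x_0$ has norm $\varepsilon\equiv1\bmod2\Z$, which is integral but not even), which is why it appears on the list of anisotropic forms recalled just before the proposition. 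Consequently the isotropic vectors of $\langle\varepsilon/4\rangle\oplus A'$ are not ``constrained to lie in $\langle 2x_0\rangle$''; they are the $y\in A'$ with $q(y)=0$ together with the elements $2x_0+y$ with $q(y)\equiv1$, and the contradiction argument you want to import from the odd-prime case breaks down because it uses anisotropy of $A'$ (to conclude $x_2=x_2'$ from $q(x_2-x_2')=0$), which fails when $A'$ is merely $2$-elementary. The missing idea is to work with subgroups that are \emph{isotropic modulo} $\Z$: if the $2$-elementary part contains a $(\Z/2)^2$ all of whose elements have integral norm (which happens as soon as its length is $\geq4$), then replacing each element of norm $1$ by its translate by $2x_0$ produces a genuine non-cyclic isotropic subgroup; this is what yields length$(A')\leq3$ in case (2), and the same trick applied to the subgroup $(\Z/2)^2\subset\langle\varepsilon/4\rangle\oplus\langle\varepsilon'/4\rangle$ (isotropic mod $\Z$) forces the remaining $2$-elementary part to have length $\leq1$ when a second order-$4$ cyclic summand is present. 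Without some such mechanism you have no argument for the bounds in case (2).

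Two smaller corrections. First, $a\oplus b\not\simeq u_1$: their norm multisets are $\{0,1/2,3/2,0\}$ and $\{0,0,0,1\}$ respectively, so this identification cannot enter your $2$-elementary bookkeeping; the paper settles case (3) by a direct check against the normal form \eqref{eqn:normal form p=2} rather than through such relations, and your combinatorial conclusions there ($a^2\oplus b^2$ contains an isotropic plane, $a^6$ does, $a^5$ does not, etc.) are correct but need to be re-derived from the actual norms. Second, your blanket claim that $\langle\varepsilon/2^k\rangle$ contains a nonzero isotropic element for every $k>1$ fails precisely at $k=2$; you do carve out $k=2$ afterwards, but as written the sentence suggests case (2) is subsumed by case (1), which it is not.
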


The cases (1) and (2) have overlap, but to keep the presentation simple we do not care about this. 
Note that forms $\langle \varepsilon/2\rangle \oplus A'$ with $A'$ anisotropic are quasi-cyclic as well 
by the cases (2) and (3).

\begin{proof} 
Let $A$ be a quasi-cyclic form on a 2-group of exponent $2^k$. 
When $k=1$, a direct calculation with \eqref{eqn:normal form p=2} shows that $A$ must be one of the forms in (3). 
In case $k\geq2$, $A$ cannot contain neither $u_k$ nor $v_k$ 
because they contain isotropic subgroups isomorphic to $({\Z}/2)^2$. 
Hence we can write $A$ as $A=\langle \varepsilon/2^k\rangle \oplus A'$. 
When $k\geq3$, $\langle \varepsilon/2^k\rangle$ has a nontrivial isotropic element so that $A'$ must be anisotropic. 
When $k=2$, either 
(i) $A'=\langle \varepsilon'/4\rangle \oplus A''$ with $A''$ 2-elementary or 
(ii) $A'$ itself is 2-elementary. 
In the first case, the subgroup 
$({\Z}/2)^2\subset \langle \varepsilon/4\rangle \oplus \langle \varepsilon'/4\rangle$ 
is isotropic modulo ${\Z}$. 
It follows that $A''$ cannot have an element of norm $\in{\Z}$, 
and hence has length $\leq1$. 
Similarly, in case (ii), $A'$ cannot contain a subgroup of length $2$ that is isotropic modulo ${\Z}$. 
Therefore $A'$ has length $\leq3$. 

Conversely, it is easy to check that the forms in (2) and (3) are indeed quasi-cyclic. 
It then remains to show that forms 
$A=\langle \varepsilon/2^k\rangle \oplus A'$ with $k\geq2$ and $A'$ anisotropic are quasi-cyclic. 
Suppose to the contrary that $A$ has two isotropic elements 
$x$, $x'$ with $(x, x')=0$ and $x\not\in\langle x'\rangle$, $x'\not\in\langle x\rangle$. 
Write $x=x_1+x_2$ and $x'=x_1'+x_2'$ according to the given decomposition $A=\langle \varepsilon/2^k\rangle \oplus A'$. 
In case $A'$ is 2-elementary, the same argument as the proof of Proposition \ref{q-cyclic p>2} can be applied. 
Next let $A'\simeq \langle \varepsilon'/4\rangle \oplus A''$ with $A''$ being 2-elementary. 
When $k$ is odd, $x_2$ and $x_2'$ should be either $0$ or have norm $1/2$ mod ${\Z}$ and order $2$. 
This time the same proof works again. 
When $k$ is even, we can still argue that 
$x_2$ and $x_2'$ should be either 
(i) both equal to the unique element of norm $1$ in $A'$ or 
(ii) both of order $4$. 
This ensures that $\langle x_1\rangle=\langle x_1'\rangle$, 
and again we can arrive at a contradiction. 
\end{proof}

\begin{corollary}\label{q-cyclic length p=2}
Quasi-cyclic forms on 2-groups have length $\leq5$. 
\end{corollary}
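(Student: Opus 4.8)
The plan is to deduce the bound directly from the classification established in Proposition~\ref{q-cyclic p=2}. Since the length of an orthogonal direct sum of finite quadratic forms is the sum of the lengths of the summands, it suffices to bound the length in each of the three cases listed there, using the fact --- recorded in the list of anisotropic $2$-group forms just before that proposition --- that an anisotropic form on a $2$-group has length at most $3$ (the extremal cases being $a^3$ and $b^3$).

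In case (1), $A \simeq \langle\varepsilon/2^k\rangle \oplus A'$ with $A'$ anisotropic, so the length of $A$ is at most $1 + 3 = 4$. In case (2), $A \simeq \langle\varepsilon/4\rangle \oplus A'$ with $A'$ being $2$-elementary of length $\leq 3$, again giving length $\leq 4$. In case (3) the forms $u_1$, $v_1$, $u_1\oplus v_1$ have lengths $2$, $2$, $4$, while $a^l\oplus b^m$ with $l+m\leq 5$ has length exactly $l+m\leq 5$; this last family realizes the bound (for instance $a^5$, whose $\min(l,m)=0\leq 1$, appears on the list). Hence every quasi-cyclic form on a $2$-group has length $\leq 5$, and the bound is sharp.

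I expect no real obstacle: the entire content sits in Proposition~\ref{q-cyclic p=2}, and what remains is a short case check. The only point needing a moment's attention is confirming the length bound on the anisotropic (respectively $2$-elementary) summand $A'$ appearing in cases (1) and (2), but this is immediate from the explicit list of anisotropic $2$-group forms recalled in the text preceding the proposition.
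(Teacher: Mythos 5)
Your proof is correct and is essentially the paper's own (implicit) argument: the corollary is read off directly from the case list in Proposition~\ref{q-cyclic p=2}, using that anisotropic forms on $2$-groups have length at most $3$. The case check and the observation that $a^5$ realizes the bound are both accurate.
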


By Corollaries \ref{q-cyclic length p>2} and \ref{q-cyclic length p=2}, 
quasi-cyclic forms have length $\leq5$. 
It follows from \cite{Ni} that 

\begin{corollary}\label{q-cyclic contain 2U}
If an even lattice of signature $(2, n)$ with $n\geq8$ has quasi-cyclic discriminant form, 
it contains $2U$. 
\end{corollary}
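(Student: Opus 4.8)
The plan is to deduce Corollary \ref{q-cyclic contain 2U} from the length bound (length $\leq 5$, established in Corollaries \ref{q-cyclic length p>2} and \ref{q-cyclic length p=2}) together with Nikulin's existence and uniqueness results for even lattices in terms of their signature and discriminant form. First I would recall the relevant criterion from \cite{Ni}: if $L$ is an even lattice of signature $(s_+,s_-)$ and discriminant form $A_L$, then $L$ contains a hyperbolic plane $U$ as an orthogonal summand as soon as $s_+\geq 1$, $s_-\geq 1$, and $s_++s_- \geq \ell(A_L)+2$, where $\ell(A_L)$ is the minimal number of generators of $A_L$; more precisely, one applies the existence theorem to produce an even lattice $L'$ with signature $(s_+-1,s_--1)$ and the same discriminant form, and then $U\oplus L'$ has the same signature and discriminant form as $L$, hence is isometric to $L$ by the uniqueness theorem in the indefinite range. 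Iterating this twice gives $2U$ as a summand.

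Concretely, for $L$ of signature $(2,n)$ with quasi-cyclic $A_L$, we have $\ell(A_L)\leq 5$ by the appendix. To split off the first $U$ we need $2+n \geq \ell(A_L)+2$, i.e.\ $n\geq \ell(A_L)$, which holds once $n\geq 5$. Writing $L = U\oplus L_1$ with $L_1$ of signature $(1,n-1)$ and $A_{L_1}\simeq A_L$ still quasi-cyclic of length $\leq 5$, to split off a second $U$ we need $1+(n-1)\geq \ell(A_{L_1})+2$, i.e.\ $n\geq \ell(A_{L_1})+2$, which holds once $n\geq 7$. Hence for $n\geq 8$ (indeed $n\geq 7$) we obtain $L\simeq 2U\oplus M$ for some negative-definite even lattice $M$ with $A_M\simeq A_L$.

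For the writeup I would state this as a short argument: invoke Corollaries \ref{q-cyclic length p>2} and \ref{q-cyclic length p=2} to get $\ell(A_L)\leq 5$, then quote the precise form of Nikulin's theorem (\cite{Ni}, Corollary 1.13.3 / Theorem 1.14.2, or the standard consequence that an even lattice of signature $(p_+,p_-)$ with $p_\pm\geq 1$ and $p_++p_-\geq \ell(A_L)+2$ splits off $U$), and apply it twice. The only real point to be careful about is bookkeeping the signature and length after the first split, which is routine since the discriminant form is unchanged. I expect essentially no obstacle here — the content is entirely contained in the length bound from the appendix plus the cited results of Nikulin; the proof is a two-line verification of the numerical inequality $n\geq \ell(A_L)+2$ in the range $n\geq 8$.
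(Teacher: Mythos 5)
Your proposal is correct and is essentially the paper's own argument: the paper likewise combines the length bound $\ell(A_L)\leq 5$ from Corollaries \ref{q-cyclic length p>2} and \ref{q-cyclic length p=2} with Nikulin's splitting criterion from \cite{Ni}, merely without spelling out the two-step application. Your bookkeeping of the signature and length after the first split is the right (routine) verification, and your observation that $n\geq 7$ already suffices is consistent with the paper's stated range $n\geq 8$.
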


\subsection{Quasi-cyclic overlattices}\label{ssec:quasi-cyclification}

Let $A$ be a given finite quadratic form. 
Our aim below is to find an isotropic subgroup $G\subset A$ such that 
(i) the quadratic form induced on $G^{\perp}/G$ is quasi-cyclic and that 
(ii) the ``size'' of $G^{\perp}/G$ is kept as large as possible. 
Precisely, we will adopt exponent as the measure of ``size''. 
Of course we could take $G$ so that $G^{\perp}/G$ is anisotropic, but then the ``size'' would be reduced too much. 
Due to this, the argument of \S \ref{ssec:reduction} for fixed (small) $n$ would not work 
if we use only maximal overlattices of given lattices. 
This is why we consider quasi-cyclic lattices in the reduction process: 
we have plenty of such lattices to meet (ii), 
and with them we can still obtain cusp forms by the Jacobi lifting. 

As usual, we shall localize the problem and work at each prime $p$. 
First consider the case $p>2$. 

\begin{lemma}\label{q-cyclic overlattice p>2}
Let $A$ be a quadratic form on a $p$-group with $p>2$. 
Then there exists an isotropic subgroup $G\subset A$ such that 
$G^{\perp}/G$ is quasi-cyclic and has the same exponent as $A$. 
\end{lemma}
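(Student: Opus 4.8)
The plan is to argue by induction on $|A|$, reducing the whole statement to a single structural step. If $A$ is already quasi-cyclic there is nothing to prove, so assume it is not. The crux will be to produce a \emph{cyclic} isotropic subgroup $G_0\subset A$ of order $p$ such that $G_0^{\perp}/G_0$ still has exponent equal to that of $A$ (and hence, since $A$ is nondegenerate, strictly smaller order $|A|/p^2$). Granting this, the induction hypothesis applied to the finite quadratic form $G_0^{\perp}/G_0$ yields an isotropic $\bar{G}\subset G_0^{\perp}/G_0$ with $\bar{G}^{\perp}/\bar{G}$ quasi-cyclic of the same exponent; taking for $G$ the preimage of $\bar{G}$ under $G_0^{\perp}\twoheadrightarrow G_0^{\perp}/G_0$ (so $G\supset G_0$ and $G/G_0=\bar{G}$), the standard correspondence between isotropic subgroups of $A$ containing $G_0$ and isotropic subgroups of $G_0^{\perp}/G_0$ (see \cite{Ni} \S 1.5) gives an isotropic $G\subset A$ with $G^{\perp}/G\cong\bar{G}^{\perp}/\bar{G}$, which is exactly what we want. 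So everything comes down to finding $G_0$.

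To find $G_0$, write $A=\bigoplus_i\langle\varepsilon_i/p^{k_i}\rangle$ in Wall's normal form \cite{Wa}, let $p^K$ be the exponent, and note that for $k\geq2$ the summand $\langle\varepsilon/p^k\rangle$ contains the isotropic element $p^{k-1}x$ of order $p$, where $x$ is a generator (its norm $p^{k-2}\varepsilon$ is an integer, hence $0$ in $\Q/\Z$; for $p$ odd one may work throughout with $\Q/\Z$-valued bilinear forms). If $K=1$, then $A$ is $p$-elementary and, being non-quasi-cyclic, is not anisotropic and has $|A|>p^2$; I would take $G_0=\langle v\rangle$ for any nonzero isotropic $v$ (necessarily of order $p$), so that $G_0^{\perp}/G_0$ is a nonzero $p$-elementary form, of exponent $p$. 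If $K\geq2$, I would invoke the classification (Proposition \ref{q-cyclic p>2}): since $A$ is not $p$-elementary it is not of type (2) there, and since it is not quasi-cyclic it is not of type (1), so one of the following holds — (i) $A$ has a summand $\langle\eta/p^j\rangle$ with $1<j<K$; (ii) $A$ has at least two summands of order $p^K$; (iii) $A$ has exactly one summand of order $>p$, and its order-$p$ part is not anisotropic. In case (i) I would set $v=p^{j-1}y$, $y$ a generator of $\langle\eta/p^j\rangle$; in case (ii), $v=p^{K-1}x_1$, $x_1$ a generator of one summand of order $p^K$; in case (iii), $v$ a nonzero isotropic vector of the order-$p$ part. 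In every case $v$ has order $p$ and is orthogonal to a generator $x$ of some summand of order $p^K$ lying in a \emph{different} orthogonal summand of $A$; hence $p^{K-1}x\notin\langle v\rangle$, so the image of $x$ in $G_0^{\perp}/G_0=v^{\perp}/\langle v\rangle$ still has order $p^K$ and the exponent is preserved. Setting $G_0=\langle v\rangle$ then completes the induction.

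The main obstacle, and the reason for the slightly delicate choice of $v$, is the exponent constraint: one must not quotient by a maximal isotropic subgroup. For instance, for $A=\langle\varepsilon/p^2\rangle\oplus\langle\varepsilon'/p^2\rangle$ the maximal isotropic subgroup is $pA\cong(\Z/p)^2$, and $(pA)^{\perp}=pA$, so passing to $(pA)^{\perp}/pA$ destroys the exponent entirely; quotienting instead by a single order-$p$ line chosen orthogonal to a surviving top-order generator is precisely what avoids this. The remaining points — that the displayed $v$ is isotropic of order $p$, that cases (i)--(iii) are exhaustive when $K\geq2$, and the identity $|v^{\perp}|=|A|/p$ for $v$ of order $p$ in a nondegenerate form — are short verifications resting only on Wall's normal form and Proposition \ref{q-cyclic p>2}.
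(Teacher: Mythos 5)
Your argument is correct, but it takes a genuinely different route from the paper's. The paper's proof is a one-step construction: split off a cyclic summand $\langle \varepsilon/p^k\rangle$ whose order $p^k$ equals the exponent of $A$, take $G$ to be a maximal isotropic subgroup of the orthogonal complement $A'$, so that $(G^{\perp}\cap A')/G$ is anisotropic and hence $G^{\perp}/G\simeq\langle \varepsilon/p^k\rangle\oplus(\text{anisotropic})$ is quasi-cyclic by Proposition \ref{q-cyclic p>2}; the exponent is preserved for free because the top cyclic summand is never touched. You instead run an induction on $|A|$, peeling off one order-$p$ isotropic line $\langle v\rangle$ at a time, with $v$ chosen in a different orthogonal summand from a generator $x$ of a top-order cyclic piece so that the image of $x$ in $v^{\perp}/\langle v\rangle$ keeps its order, and you stop as soon as the quotient is quasi-cyclic. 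Your three cases for producing $v$ when $K\geq 2$ are exhaustive (if neither an intermediate-order summand nor a second top-order summand exists, then $A=\langle\varepsilon/p^K\rangle\oplus A_1$ with $A_1$ elementary, and non-quasi-cyclicity forces $A_1$ to be isotropic via the sufficiency direction of Proposition \ref{q-cyclic p>2}), and the correspondence between isotropic subgroups of $A$ containing $G_0$ and isotropic subgroups of $G_0^{\perp}/G_0$ is standard, so the induction closes. What the paper's approach buys is brevity: a single splitting handles the exponent constraint automatically and needs no induction. What yours buys is that it terminates at the first quasi-cyclic quotient, so it generally kills a smaller isotropic subgroup than the maximal one in $A'$; for the purposes of this lemma that extra economy is not needed, but the proof is sound.
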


\begin{proof}
Let $p^k$ be the exponent of $A$. 
We can find an orthogonal splitting $A=\langle \varepsilon/p^k\rangle \oplus A'$. 
Choose an isotropic subgroup $G$ of $A'$ that is maximal inside $A'$. 
Then $(G^{\perp}\cap A')/G$ is an anisotropic form. 
By Proposition \ref{q-cyclic p>2}, 
\begin{equation*}
(G^{\perp}\cap A)/G \simeq \langle \varepsilon/p^k\rangle \oplus (G^{\perp}\cap A')/G
\end{equation*}
is quasi-cyclic. 
It obviously has the same exponent as $A$. 
\end{proof}

The case $p=2$ is a bit more complicated. 

\begin{lemma}\label{q-cyclic overlattice p=2}
Let $A$ be a quadratic form on a 2-group of exponent $2^k$. 
We define $k'$ by $k'=k$ if 
$A$ contains a cyclic form $\langle \varepsilon/2^k\rangle$ for some $\varepsilon\in{\Z}_2^{\times}$ 
or if $A$ is 2-elementary, 
and $k'=k-1$ otherwise. 
Then there exists an isotropic subgroup $G\subset A$ such that $G^{\perp}/G$ is quasi-cyclic of exponent $2^{k'}$. 
\end{lemma}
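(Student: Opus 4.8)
Following the proof of Proposition~\ref{q-cyclic p=2}, I would work with a normal form \eqref{eqn:normal form p=2}, say $A\simeq\bigoplus_{j\ge1}(u_j^{l_j}\oplus v_j^{m_j}\oplus A_j)$ with $m_j\le1$ and $A_j$ a sum of at most two cyclic forms of order $2^j$, and reduce $A$ by repeatedly passing to $G_0^{\perp}/G_0$ for a suitable isotropic $G_0$, one Jordan constituent at a time, until what remains is one of the three shapes of quasi-cyclic form listed there. Each such step takes place inside an orthogonal summand and leaves the others alone, so the reductions for the various constituents may be composed, and their composite is the subgroup $G$ we want. The exponent bookkeeping in the statement is arranged precisely so that this process lowers the exponent from $2^k$ to $2^{k'}$ and no further.

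The elementary moves I would use are the following. A hyperbolic plane $u_j$ is metabolic: for a basis $e_1,e_2$ with $q(e_1)=q(e_2)=0$ the cyclic group $\langle e_1\rangle$ is its own orthogonal complement, so $u_j$ disappears; likewise $\langle\varepsilon/2^j\rangle\oplus\langle-\varepsilon/2^j\rangle$ (via its isotropic diagonal) and $v_2$ (whose $({\Z}/2)^2$ of $2$-torsion is a Lagrangian) disappear. A single cyclic constituent $\langle\varepsilon/2^j\rangle$ with generator $x_0$ and $j\ge3$ has the order-two isotropic subgroup $\langle 2^{j-1}x_0\rangle$ with quotient $\langle\varepsilon/2^{j-2}\rangle$; iterating, it is replaced by $\langle\varepsilon/2\rangle$ or $\langle\varepsilon/4\rangle$, which are anisotropic. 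Finally, by Wall's relations ($v_j^2\simeq u_j^2$, and $v_j\oplus\langle\varepsilon/2^j\rangle\simeq u_j\oplus\langle\varepsilon'/2^j\rangle$) a $v_j$ can be traded for a $u_j$ whenever another constituent of level $j$ is present, while a $v_j$ alone at its level reduces, by iterated quotients by its full $2$-torsion, to $v_1$ (anisotropic) if $j$ is odd and to $0$ if $j$ is even.

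I would then split into cases according to the definition of $k'$. If $A$ is $2$-elementary ($k'=1$): split off copies of $u_1$; a quotient of a $2$-elementary form is again $2$-elementary, so, stopping before the form becomes $0$, a direct inspection of \eqref{eqn:normal form p=2} together with Wall's relations lets us arrange the result to be one of $u_1$, $v_1$, $u_1\oplus v_1$, or $a^l\oplus b^m$ with $\min(l,m)\le1$ and $l+m\le5$, which are exactly the forms of Proposition~\ref{q-cyclic p=2}(3), with exponent still $2$. If $A$ has a cyclic summand $\langle\varepsilon/2^k\rangle$ of order $2^k$ ($k'=k$): isolate one such summand, absorb any $v_k$ into a cyclic summand of level $k$ and delete the resulting $u_k$'s, reduce every other cyclic summand of level $k$ to an anisotropic one, and reduce or eliminate all constituents of level $<k$ — to anisotropic forms when $k\ge3$, or to a $2$-elementary form of length $\le3$ when $k=2$. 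What is left is $\langle\varepsilon/2^k\rangle\oplus A'$ with $A'$ anisotropic (for $k\ge3$), resp.\ $\langle\varepsilon/4\rangle\oplus A'$ with $A'$ a $2$-elementary form of length $\le3$ (for $k=2$): quasi-cyclic by Proposition~\ref{q-cyclic p=2}(1), resp.\ (2), and of exponent $2^k=2^{k'}$ because the isolated summand was never touched.

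The remaining case — $A$ of exponent $2^k$ with $k\ge2$, not $2$-elementary, and with no cyclic summand of order $2^k$, so that $k'=k-1$ and the level-$k$ part of $A$ is $u_k^{l_k}\oplus v_k^{m_k}$ with $m_k\le1$ and $(l_k,m_k)\ne(0,0)$ — is where the work lies. Here I would delete all but one of the level-$k$ constituents (metabolic, after a possible $v_k\leftrightarrow u_k$ trade), reduce all constituents of level $<k$ to anisotropic forms, and finally replace the single surviving $u_k$ (or $v_k$) by a quasi-cyclic form of exponent exactly $2^{k-1}$. For that last replacement I would use an explicit isotropic subgroup of $u_k$ whose $2$-rank grows with $k$ — a suitable enlargement, by isotropic vectors of the form $2^{i}e_1$, of the subgroup generated by an appropriate multiple of $e_1+e_2$ — chosen so that the quotient carries an element of order $2^{k-1}$ and odd norm which splits off an \emph{anisotropic} remainder, yielding $\langle\varepsilon/2^{k-1}\rangle\oplus(\text{anisotropic})$, quasi-cyclic by Proposition~\ref{q-cyclic p=2}(1); a lone $v_k$ is handled in parallel. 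This is the main obstacle: a Lagrangian-type quotient of $u_k$ only yields another $u_j$, which for $j\ge2$ is not quasi-cyclic, so one must check, by an explicit computation uniform in $k$ that uses Wall's relations to recognise the quotient form, that the enlarged subgroup really does carve off a cyclic summand of order $2^{k-1}$ with anisotropic complement. That uniform computation is the one step I would carry out in full.
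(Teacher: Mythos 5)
There is a genuine gap, and it is structural. You reduce the Jordan constituents of $A$ one at a time to anisotropic forms and then assert that the orthogonal complement of the distinguished cyclic summand ``is anisotropic''. But an orthogonal direct sum of anisotropic forms need not be anisotropic: $\langle 1/2\rangle\oplus\langle -1/2\rangle$ already contains a nonzero isotropic vector. Concretely, for $A=\langle 1/32\rangle\oplus\langle 1/8\rangle\oplus\langle -1/2\rangle$ your procedure isolates $\langle 1/32\rangle$ and reduces $\langle 1/8\rangle$ to $\langle 1/2\rangle$, leaving $\langle 1/32\rangle\oplus\langle 1/2\rangle\oplus\langle -1/2\rangle$; the subgroup generated by $16$ times a generator of $\langle 1/32\rangle$ and the diagonal of $\langle 1/2\rangle\oplus\langle -1/2\rangle$ is isotropic and isomorphic to $({\Z}/2)^2$, so the result is not quasi-cyclic, and no move in your toolkit applies since each remaining constituent is individually anisotropic. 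The paper's proof avoids this entirely: it splits off a \emph{single} cyclic summand (of order $2^k$, or the $\langle -1/2^{k-1}\rangle$ extracted from $u_k$ or $v_k$) and then takes one maximal isotropic subgroup of the \emph{entire} remaining complement at once; the quotient of a form by a maximal isotropic subgroup is automatically anisotropic, which is exactly what Proposition \ref{q-cyclic p=2}(1) requires. Your per-constituent composition cannot deliver that global anisotropy, nor, in the $k=2$ subcase, the global length bound needed for Proposition \ref{q-cyclic p=2}(2).

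The step you flag as the main obstacle is also pointed in an unworkable direction. In $u_k$ with basis $e,f$, any $2^{i}e$ orthogonal to $x=2^{[(k+1)/2]}(e+f)$ must have $i\geq[k/2]$, and adjoining $2^{[k/2]}e$ to $\langle x\rangle$ already yields an isotropic subgroup of order $2^k$, i.e.\ a Lagrangian of $u_k$, so the quotient collapses to $0$; for $k$ even, any nontrivial enlargement at all cuts the quotient order to at most $2^{k-2}$, which cannot carry an element of order $2^{k-1}$. No enlargement is needed: the paper computes that $x^{\perp}\cap u_k/\langle x\rangle$ for the cyclic group $\langle x\rangle$ alone is already $\langle -1/2^{k-1}\rangle\oplus\langle 1/2^{l}\rangle$ with $l\in\{1,2\}$, and the leftover $\langle 1/2^{l}\rangle$ is then absorbed, together with $A'$, into the single maximal isotropic subgroup described above. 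With those two corrections --- quotient by $\langle x\rangle$ only, then one global maximal isotropic subgroup --- your outline becomes the paper's proof.
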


\begin{proof}
When a cyclic form $\langle \varepsilon/2^k\rangle$ splits off $A$, 
the same construction as in the case $p>2$ works. 
The case $A$ is 2-elementary is easy. 
Then let $A$ be of the form $u_k\oplus A'$ or $v_k\oplus A'$ 
with $k>1$ and with $A'$ not containing $\langle \varepsilon/2^k\rangle$. 
We will only study the first case; the second case can be dealt with similarly. 
Let $e, f$ be the standard basis of $u_k$. 
Consider the element 
\begin{equation*}
x = 2^{[(k+1)/2]}(e+f). 
\end{equation*}
This is isotropic, and $x^{\perp}\cap u_k$ is generated by $e-f$ and $2^{[k/2]-1}(e+f)$. 
In particular, 
\begin{equation*}
x^{\perp}\cap u_k/\langle x\rangle \simeq 
\begin{cases}
\langle -1/2^{k-1}\rangle \oplus \langle 1/2\rangle \quad k:\textrm{even}, \\ 
\langle -1/2^{k-1}\rangle \oplus \langle 1/4\rangle \quad k:\textrm{odd}. 
\end{cases}
\end{equation*}
Now consider the quadratic form 
$\bar{A}:= x^{\perp}\cap A/\langle x\rangle$. 
By the above calculation $\bar{A}$ is isometric to 
$\langle -1/2^{k-1}\rangle \oplus \langle 1/2^l\rangle\oplus A'$ with $l=1$ or $2$. 
We take an isotropic subgroup $\bar{G}$ of $\langle 1/2^l\rangle\oplus A'$ 
that is maximal inside $\langle 1/2^l\rangle\oplus A'$. 
The quadratic form $(\bar{G}^{\perp}\cap\bar{A})/\bar{G}$ 
is a sum of $\langle -1/2^{k-1}\rangle$ and an anisotropic form. 
Hence it is quasi-cyclic by Proposition \ref{q-cyclic p=2}, and has exponent $2^{k-1}$. 
We then define $G\subset A$ to be the inverse image of $\bar{G}$ by the projection $x^{\perp}\cap A\to\bar{A}$. 
\end{proof}

By Lemmas \ref{q-cyclic overlattice p>2} and \ref{q-cyclic overlattice p=2}, we obtain the following. 

\begin{proposition}\label{q-cyclic overlattice} 
Let $A$ be a finite quadratic form of exponent $D$. 
Then there exists an isotropic subgroup $G$ of $A$ such that 
the quadratic form induced on $G^{\perp}/G$ is quasi-cyclic and has exponent $D$ or $D/2$. 
\end{proposition}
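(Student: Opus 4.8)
The plan is to reduce the global statement to the two local lemmas already established, exactly along the lines of the ``basic reduction'' stated at the start of the appendix. Recall that a finite quadratic form $A$ decomposes orthogonally as $A=\bigoplus_p A_p$ into $p$-parts, that an isotropic subgroup of $A$ is the direct sum of isotropic subgroups of the $A_p$, and that $A$ is quasi-cyclic if and only if each $A_p$ is. So the first step is to write $D=\prod_p p^{k_p}$, where $p^{k_p}$ is the exponent of $A_p$, and to handle each prime separately.

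For each odd prime $p$, Lemma \ref{q-cyclic overlattice p>2} produces an isotropic subgroup $G_p\subset A_p$ with $G_p^{\perp}/G_p$ quasi-cyclic of exponent $p^{k_p}$ --- the full exponent, with no loss. For $p=2$, Lemma \ref{q-cyclic overlattice p=2} produces an isotropic $G_2\subset A_2$ with $G_2^{\perp}/G_2$ quasi-cyclic of exponent $2^{k'}$, where $k'=k_2$ unless $A_2$ is neither $2$-elementary nor splits off a cyclic form $\langle\varepsilon/2^{k_2}\rangle$, in which case $k'=k_2-1$. Then I would set $G=\bigoplus_p G_p$. This $G$ is isotropic in $A$, and since $G^{\perp}/G=\bigoplus_p (G_p^{\perp}/G_p)$ with each summand quasi-cyclic, the whole form $G^{\perp}/G$ is quasi-cyclic by the basic reduction. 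Its exponent is $\mathrm{lcm}$ of the exponents of the summands, which is either $D$ (if $k'=k_2$, i.e.\ in the favourable $2$-adic case, or if $A_2=0$) or $D/2$ (if the exponent of $G_2^{\perp}/G_2$ dropped by one factor of $2$); in the latter case all odd parts contribute their full $p^{k_p}$, so the overall exponent is exactly $D/2$.

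There is essentially no obstacle here: the statement is a pure gluing of the two local results via the $p$-adic decomposition, and the only point requiring a moment's care is confirming that $\mathrm{lcm}$ of $\{p^{k_p}\}_{p\neq 2}$ together with $2^{k'}$ is $D$ or $D/2$ according to whether $k'=k_2$ or $k'=k_2-1$ --- which is immediate since $2^{k_2}\| D$. If anything, the ``hard part'' was proving the two local lemmas, especially Lemma \ref{q-cyclic overlattice p=2}, where one must handle the $u_k$ and $v_k$ summands by hand; but those are already done. So the proof of the proposition itself is a short paragraph: decompose, apply the two lemmas at each prime, take the direct sum of the resulting isotropic subgroups, and read off the exponent.
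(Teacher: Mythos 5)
Your proposal is correct and matches the paper's argument: the paper derives the proposition directly from Lemmas \ref{q-cyclic overlattice p>2} and \ref{q-cyclic overlattice p=2}, leaving the prime-by-prime gluing (orthogonal decomposition, direct sum of the local isotropic subgroups, and the lcm computation of the exponent) implicit, which is exactly what you have written out.
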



\vspace{0.4cm}
\noindent
\textbf{Acknowledgement.} 
The author is indebted to Professors S.~Kond\=o and K.~Yoshikawa for their valuable comments and encouragement.


\end{document}